\def\ov{\overline}
\def\l{\langle} 
\def\r{\rangle} 
\def\ZZ{{\sf Z}}
\def\mod{{\sf mod~}}
\def\Aut{{\sf Aut}}
\def\Hom{{\sf Hom}}
\def\Hol{{\sf Hol}}
\def\D{{\rm D}} \def\Q{{\rm Q}}
\def\S{{\rm S}} 
 \def\M{{\rm M}}
\def\SD{{\rm SD}}
\def\Z{{\bf Z}}
\def\a{\alpha} \def\b{\beta} 
\def\s{\sigma}
\def\GL{{\rm GL}}
\def\calM{{\mathcal M}}
\def\vs{\vskip0.03in}
\def\le{\leqslant}
\def\ge{\geqslant}
\def\calT{{\mathcal T}}
\def\tt{{\sf t}}
\def\EM{{\sf EM}}
\newtheorem{theorem}{Theorem}[section]%
\newtheorem{lemma}[theorem]{Lemma}%
\newtheorem{corollary}[theorem]{Corollary}%
\newtheorem{proposition}[theorem]{Proposition}%
\newtheorem{definition}[theorem]{Definition}%
\newtheorem{hypothesis}[theorem]{Hypothesis}%
\def\qed{{\hfill$\Box$\smallskip}
\medbreak}
\begin{document}

\title[Groups of square-free characteristic]{Finite 2-groups having a cyclic or dihedral maximal subgroup and arc-transitive maps}
\thanks{The project was partially supported by the NNSF of China (11931005).}
\author{Peice Hua}
\address{Shenzhen International Center for Mathematics\\
Southern  University of Science and Technology,  Shenzhen 518055,  P.R.China}
\email{huapc@pku.edu.cn}

\begin{abstract}
We classify all finite $ 2 $-groups that have a cyclic or dihedral maximal subgroup and determine their automorphism groups. Based on this result, we classify all pairs $ (G,\calM) $, such that $ G $ is a finite $ 2 $-group and $ \calM $ is a $ G $-arc-transitive map with Euler characteristic not being divisible by $ 4 $.

\vskip0.1in
\noindent\textit{Keywords:} 2-group, arc-transitive map, square-free, Euler characteristic
\end{abstract}

\maketitle

\date\today

\section{Introduction}

This is the second paper in a series aimed at extending several well-known characterizations of finite groups and establishing connections between the relevant groups and classification problems concerning specific maps.\vs

Cyclic and dihedral groups can be regarded in every sense as the simplest and most fundamental components among finite groups. In this series of papers, we investigate the groups satisfying Hypothesis \ref{hypo-0}. It generalizes earlier results concerning \textit{almost-Sylow cyclic groups}, namely, of which each Sylow subgroup is cyclic or dihedral (see \cite{Sylow-metac}). Note that, a maximal subgroup of a $ p $-group is always of index $ p $.
\begin{hypothesis}\label{hypo-0}
	{\rm Let $G$ be a finite group of which each Sylow subgroup has a cyclic or dihedral maximal subgroup.}
\end{hypothesis}

The study of such groups is strongly motivated by classification problems of edge-transitive maps with square-free Euler characteristic. A \textit{map} $ \calM=(V,E,F) $ is a $ 2 $-cell embedding of a graph into a closed surface with vertex set $ V $, edge set $ E $ and face set $ F $. An \textit{arc} is a directed edge. If $\Aut\calM$, the group of all automorphisms of $ \calM $, acts transitively on the set of edges or arcs of $ \calM $, then $\calM$ is respectively called {\it edge-transitive} or {\it arc-transitive}. The {\it Euler characteristic} of $\calM$ is a topology parameter defined to be that of its supporting surface, that is,
\[\chi(\calM)=|V|-|E|+|F|.\] 

Edge-transitive maps are categorized into fourteen types according
to local structures and local actions in \cite{GW,ST}, among which five types are arc-transitive. The problem of constructing and classifying special classes of such maps with specific $\chi(\calM)$ has attracted considerable attention; refer to \cite{CNS,CPS,Char-p,bi-rotary} for $\chi(\calM)$ to be a negative prime or product of two primes. The following observation established in \cite[Lemma~2.2]{HLZZ} leads us to relaxing the restriction on $\chi(\calM)$ to being only square-free: 

{\it Let $\calM$ be a map and $ \Aut\calM $ be the automorphism group of $ \calM $. For any prime divisor $ p $ of $ |\Aut\calM| $, if $ p^2\nmid \chi(\calM) $, then each Sylow $ p $-subgroup of $ \Aut\calM $ has a cyclic or dihedral subgroup of index $ p $. In particular, if $ \chi(\calM) $ is square-free, then the group $ \Aut\calM $ satisfies {\rm Hypothesis {\rm \ref{hypo-0}}}.}\vs

The previous work \cite{HLZZ} characterizes the non-solvable groups which satisfy Hypothesis \ref{hypo-0}.
The current paper focuses on the $p$-groups with Hypothesis~\ref{hypo-0}, and a forthcoming work \cite{HLZZ2} will characterize the solvable groups in the general case. According to a classical result \cite[5.3.4]{Robinson}, $ p $-groups of odd orders that have a maximal cyclic subgroup are well-classified, with only three families: a cyclic family, an abelian family and a non-abelian meta-cyclic family. On the other hand, the automorphism groups of edge-transitive maps are inherently of even orders. Therefore, we only need to work on the case $ p=2 $.\vs

The main result of this paper is as follows, where as usual, $\ZZ_n$ denotes the cyclic group of order $n$, $\D_{2n}$, $\SD_{2n}$ and $\Q_{2n}$ respectively denote the dihedral, the \textit{semi-dihedral} and the \textit{generalized quaternion} group of order $2n$, and $A\circ B$ denotes the \textit{central product} of $ A $ and $ B $. Additional definitions are provided in Section 2.

\begin{theorem}\label{thm:p-gps}
Let $G$ be a finite $2$-group that has a cyclic or dihedral maximal subgroup.
Then $ G $ is one of the groups listed in the following tables, with $ \Aut(G) $ and some more information of $ G $ given.
\begin{table}[h]
	\newcommand{\tabincell}[2]{\begin{tabular}{@{}#1@{}}#2\end{tabular}}
	\centering
	\caption{\small The general case: $ \Aut(G) $ is a $ 2 $-group, $ \ell\ge2 $}\label{tab-main}
	\scalebox{0.8}{
		\begin{tabular}{ccccc}
			\toprule[1pt]
			$G$ & $ \Aut(G) $ & \tabincell{c}{\rm reversing \\ \rm triple} &  \tabincell{c}{\rm regular \\ \rm triple} & \tabincell{c}{\rm rotary \\ \rm pair}\\
			\midrule[1pt]
			
			$ \ZZ_{2^\ell} $ & $ \ZZ_{2^{\ell-2}}\times \ZZ_2 $ & $\times$ &  $\times$ & $ \surd $ \\
			\midrule[0.5pt]
			
			\tabincell{c}{$ \ZZ_4\times\ZZ_2 $ \\ $ \ZZ_{2^{\ell+1}}\times\ZZ_2$} & \tabincell{c}{$ \D_8 $ \\ $ (\ZZ_{2^{\ell-1}}\circ\D_8)\times\ZZ_2$} & $\times$ &  $\times$ & $ \surd $ \\
			\midrule[0.5pt]
			
			$ \D_{{2^{\ell+1}}} $ & $ \Hol(\ZZ_{2^\ell})$ & $\surd$ &  $\surd$ & $\surd$ \\
		    \midrule[0.5pt]
			
			$ \SD_{2^{\ell{+2}}}$ & $ \ZZ_{2^{\ell}}{:}\Aut(\ZZ_{2^{\ell+1}})$ & $\times $ &  $\times$ & $\surd $ \\
			\midrule[0.5pt]
			
			\tabincell{c}{$\ZZ_{2^{\ell+1}}{:}\ZZ_2=\l a\r{:}\l b\r$, \\ $ a^b=a^{2^{\ell}+1} $} 
			& $ (\ZZ_{2^{\ell-1}}\circ\D_8)\times\ZZ_2 $ & $\times $ &  $\times$ & $ \surd$ \\
			\midrule[0.5pt]
			
			$ \Q_{2^{\ell+2}}$
			& $ \Hol(\ZZ_{2^{\ell+1}}) $ & $\times $ &  $\times$ & $ \times $ \\
			\midrule[1pt]
			
			$\Q_{2^{\ell+2}}\circ \ZZ_4$ & $ \Aut(\Q_{2^{\ell+2}})\times\ZZ_2 $ & $\surd$ &  $\times$ & $ \times $ \\
			\midrule[0.5pt]
			
			$ \D_{2^{\ell+1}}\times\ZZ_2$
			& \tabincell{c}{$ H.(\D_8\times\ZZ_2) $, \\ $ H\cong \ZZ_{2^{\ell-1}}{:}\Aut(\ZZ_{2^\ell}) $}  & $\surd$ &  $\surd$ & $ \times $\\
			\midrule[0.5pt]
			
			\tabincell{c}{$ \ZZ_{2^{\ell+1}}{:}\ZZ_2^2=\l a\r{:}\l b,c\r$, \\ $ \l a,b\r=\D_{{2^{\ell+2}}}$, $ \l a, bc\r=\SD_{{2^{\ell+2}}}$}
			& $ (\Aut(\SD_{{2^{\ell+2}}})\times\ZZ_2){:}\ZZ_2 $ & $\surd$ &  $\surd$ & $ \times $\\		
			\bottomrule[1pt]
	\end{tabular}}
\end{table}
\begin{table}[h]
	\newcommand{\tabincell}[2]{\begin{tabular}{@{}#1@{}}#2\end{tabular}}
	\centering
	\caption{\small The degenerate case: $ \Aut(G) $ is not a $ 2 $-group }\label{tab-small}
	\scalebox{0.8}{
		\begin{tabular}{ccccc}
			\toprule[1pt]
			$G$ & $ \Aut(G) $ & \tabincell{c}{\rm reversing \\ \rm triple} &  \tabincell{c}{\rm regular \\ \rm triple} & \tabincell{c}{\rm rotary \\ \rm pair}\\
			\midrule[1pt]
			
			$ \ZZ_{2}^2=\D_4 $ & $ \S_3$ & $\surd$ & $\surd$  &  $\surd$ \\
			\midrule[0.5pt]
			
			$ \Q_{8}$ & $ \S_4$ & $\times$ &  $\times$ & $\times$ \\
			\midrule[1pt]
			
			$ \ZZ_{2}^3 $ & $ \GL_2(3)$ & $\surd$ & $\surd$  &  $\times$ \\
			\midrule[0.5pt]
			
			$\Q_{8}\circ \ZZ_4$ & $ \S_4\times\ZZ_2 $ & $\surd$ &  $\times$ & $ \times $\\
			\bottomrule[1pt]
	\end{tabular}}
\end{table}
\end{theorem}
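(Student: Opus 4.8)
The plan is to prove the classification first, then compute $\Aut(G)$ group by group, then settle the last three columns. Throughout, the only structural fact needed is that the prescribed maximal subgroup $M\le G$ has index $2$, so $M\trianglelefteq G$ and $G$ is an extension $1\to M\to G\to\ZZ_2\to1$; I would split on whether $M$ is cyclic or dihedral and take the union of the two lists, a single $G$ possibly carrying both kinds of maximal subgroup. If $M$ is cyclic, $G$ is a $2$-group with a cyclic subgroup of index $2$ and I would quote the classical list (cf.\ \cite[5.3.4]{Robinson}): $G$ is $\ZZ_{2^n}$, $\ZZ_{2^{n-1}}\times\ZZ_2$, the modular group $\l a\r{:}\l b\r$ with $a^b=a^{2^{n-1}+1}$, $\D_{2^n}$, $\SD_{2^n}$ or $\Q_{2^n}$, which yields the first six rows of Table~\ref{tab-main} and the rows $\ZZ_2^2$, $\Q_8$ of Table~\ref{tab-small}. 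That every group in the tables really has a cyclic or dihedral maximal subgroup, and that the families are pairwise non-isomorphic, is routine; the content is exhaustiveness.

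So suppose $M\cong\D_{2^{k+1}}=\l a,x\r$ with $|a|=2^k$ and $k\ge1$, hence $|G|=2^{k+2}$, and (having found the rest above) that $G$ has no cyclic subgroup of index $2$. For $k\ge2$ the rotation subgroup $C=\l a\r\cong\ZZ_{2^k}$ is the unique cyclic subgroup of its order in $M$, hence characteristic in $M$ and normal in $G$; I would then dichotomise on whether $C$ is self-centralising. If $\C_G(C)=C$, then $G/C$ embeds in $\Aut(\ZZ_{2^k})$, and since $x$ inverts $a$ while $-1$ is not a square there, $G/C\cong\ZZ_2^2$; the three subgroups of $G$ strictly between $C$ and $G$ are then a dihedral, a semidihedral and a modular group, so $G$ is the group $\l a\r{:}\l b,c\r$ of the penultimate row of Table~\ref{tab-main}. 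If $\C_G(C)\ne C$, then $\C_G(C)$ is abelian of order $2^{k+1}$, not cyclic by our assumption, hence $\l a\r\times\l z\r$ with $z^2=1$; now $\C_G(M)$ has order $4$ and is $\ZZ_2^2$ or $\ZZ_4$, giving $G=\D_{2^{k+1}}\times\ZZ_2$ or $G=\D_{2^{k+1}}\circ\ZZ_4$, and in the latter case replacing $x$ by $xz'$ with $z'$ a square root of the central involution (so $(xz')^2\ne1$, while $xz'$ still inverts $a$) exhibits $G\cong\Q_{2^{k+1}}\circ\ZZ_4$. The degenerate value $k=1$ (so $M\cong\ZZ_2^2$) is treated by listing the groups of order $8$ with a Klein-four maximal subgroup and contributes $\ZZ_2^3$; the small values $k\in\{2,3\}$ fold into the families above, with $\D_8\circ\ZZ_4\cong\Q_8\circ\ZZ_4$ placed in Table~\ref{tab-small} because its automorphism group is not a $2$-group.

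For each $G$ on the resulting list I would compute $\Aut(G)$ by letting it act on the canonical characteristic subgroups --- the centre, the Frattini and derived subgroups, and, where present, the characteristic cyclic subgroup --- and counting admissible images of a fixed generating set; the structural descriptions (the holomorphs $\Hol(\ZZ_{2^\ell})$, the central products with $\D_8$, and the split resp.\ non-split extensions in the last two rows of Table~\ref{tab-main}) then come from exhibiting explicit complements or central factors inside $\Aut(G)$, and may be cross-checked against the small-group library. For the ``reversing triple'', ``regular triple'' and ``rotary pair'' columns I would, group by group, either write down a generating tuple of the type defined in Section~2, or, to justify a ``$\times$'', extract an obstruction from data already at hand --- the number and conjugacy pattern of involutions, the structure of $G/G'$, or the order of a product of two involutions.

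The extension analysis is short once the characteristic cyclic subgroup is in play, and most of the $\Aut(G)$ computations are mechanical; the real work, and where I expect the difficulty to concentrate, is carrying out the last three columns uniformly --- a negative entry must be argued rather than merely observed --- with the two groups $\D_{2^{k+1}}\times\ZZ_2$ and $\l a\r{:}\l b,c\r$, whose automorphism groups are the awkward non-split-type extensions of Table~\ref{tab-main}, the most delicate for both the automorphism-group computation and the tuple certifications.
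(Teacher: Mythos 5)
Your classification argument is sound and, in the dihedral case, takes a genuinely different route from the paper's. The paper's Proposition~\ref{2-gp-dihedral} fixes $c\in G\setminus D$ and runs an explicit case analysis on $c^2\in\{a^i\ne1,\,a^ib,\,1\}$ and then on the action $\lambda\in\{\pm1,2^{\ell-1}\pm1\}$ together with the twist $b^c=a^jb$, performing an ad hoc generator change in each subcase; you instead dichotomise on whether $C=\l a\r$ is self-centralising, which yields at once that either $G/C\cong\ZZ_2^2$ acts faithfully on $C$ (forcing the $\ZZ_{2^{\ell+1}}{:}\ZZ_2^2$ family and explaining why it needs $|a|\ge 2^4$), or $\C_G(C)\cong\ZZ_{2^{\ell}}\times\ZZ_2$ and $\C_G(M)\in\{\ZZ_2^2,\ZZ_4\}$, giving $\D_{2^{\ell+1}}\times\ZZ_2$ and $\D_{2^{\ell+1}}\circ\ZZ_4\cong\Q_{2^{\ell+1}}\circ\ZZ_4$. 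Your version is conceptually cleaner and explains why exactly three new families appear; the paper's version is more computational but produces along the way the explicit presentations and generator normalisations that are reused later (in Lemmas~\ref{Z-circ-Q-hypo-2}, \ref{Klein-I}, \ref{Klein-II} and Section~4). The cyclic case (quoting \cite[5.3.4]{Robinson}) and your plan for $\Aut(G)$ and for the last three columns coincide with what the paper actually does in Proposition~\ref{2-gp-cycle} and Lemmas~\ref{ZQ-aut}, \ref{2-gp-aut-I}, \ref{2-gp-aut-II}.

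Two places are thinner than they read. First, in the branch $\C_G(C)=C$, knowing that the three intermediate subgroups are dihedral, semidihedral and modular does not yet pin down $G$ up to isomorphism: choosing involutions $b,c$ with $a^b=a^{-1}$ and $a^c=a_0a$ (where $a_0$ is the central involution), one still has the two a priori possibilities $b^c=b$ and $b^c=a_0b$, and one must check that both give the same group --- this is exactly what subcases (ii.3)--(ii.4) of the proof of Proposition~\ref{2-gp-dihedral} do via the substitution $b\mapsto ab$. The fix is one line, so this is presentational rather than mathematical. Second, the bulk of the theorem's content --- the explicit structures of $\Aut(G)$ for $\ZZ_{2^{\ell}}\times\ZZ_2$, the modular group, $\D_{2^{\ell+1}}\times\ZZ_2$ and $\ZZ_{2^{\ell+1}}{:}\ZZ_2^2$, and the justification of every negative entry in the last three columns --- is only planned, not executed; the paper spends most of Sections 2 and 3 (and the involution and two-generation lemmas) carrying out precisely the order-counting and obstruction arguments you describe, and that is where the real work of this theorem lies.
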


The information in columns 3\,-\,5 of the tables contributes to the study of arc-transitive maps. Such maps are in fact determined by certain triples or pairs of elements within their automorphism groups; refer to Section 4 for further details. Recall \cite[Lemma~2.2]{HLZZ} (see page 1). Now as we focus on $ 2 $-groups, the restriction on $\chi(\calM)$ can be further relaxed to $ 4\nmid\chi(\calM) $. The second result of this paper is

\begin{theorem}\label{thm:maps-2-gps}
	Let $ G $ be a finite $ 2 $-group and $ \calM $ be a $ G $-arc-transitive map with $4\nmid\chi(\calM)$. Then the pair $ (G,\calM) $ is one of those pairs listed in {\rm Proposition \ref{2gp-rev-maps}, \ref{2gp-reg-maps}} or {\rm \ref{2gp-rot-maps}}. Further, $ \chi(\calM)=1 $, $ 2 $, $ 2-2^\ell $ or $2-2^{\ell}+2^{s} $ with $ \ell\ge s>1 $.
\end{theorem}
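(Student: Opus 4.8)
The plan is to combine the correspondence of Section~4 between $G$-arc-transitive maps and algebraic data in $G$ with the classification in Theorem~\ref{thm:p-gps}. Recall that a $G$-arc-transitive map of one of the five arc-transitive types is encoded, up to isomorphism, by a reversing triple, a regular triple, or a rotary pair of $G$ (according to the type), and that, conversely, any such triple or pair reconstructs a $G$-arc-transitive map. So the task splits into two parts: (i) show that a $2$-group $G$ carrying one of these data together with the numerical condition $4\nmid\chi(\calM)$ must be one of the groups of Tables~\ref{tab-main} and \ref{tab-small}; (ii) for each of those finitely many groups, enumerate all the triples and pairs up to the equivalence induced by map isomorphism, and read off $\chi(\calM)$ in each case.

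For (i) I would argue through the incidence numbers. Since $\calM$ is $G$-arc-transitive, all vertices share a common degree $d$ and all faces a common covalence $d^{*}$, both powers of $2$; writing $G_v, G_e, G_f$ for the stabilizers of a vertex, edge and face one has $|G_e|\in\{2,4\}$ while $G_v$ and $G_f$ are cyclic or dihedral (by the local description in Section~4) with $d\mid|G_v|$, $d^{*}\mid|G_f|$, and $|G|=|G_v|\cdot|V|=|G_e|\cdot|E|=|G_f|\cdot|F|$. Substituting into $\chi(\calM)=|V|-|E|+|F|$ writes $\chi(\calM)$ as an alternating sum of three powers of $2$ whose $2$-adic valuation is governed by $|G|$, $d$ and $d^{*}$. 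The hypothesis $4\nmid\chi(\calM)$ then forces $d$ or $d^{*}$ to be so large that the corresponding stabilizer $G_v$ or $G_f$ has index $1$ or $2$ in $G$; apart from a finite list of small groups (those of Table~\ref{tab-small}) this says $G$ has a cyclic or dihedral maximal subgroup, i.e.\ $G$ satisfies Hypothesis~\ref{hypo-0}, and Theorem~\ref{thm:p-gps} applies. Equivalently, one may quote \cite[Lemma~2.2]{HLZZ} for a Sylow $2$-subgroup of $\Aut\calM$, which contains $G$, and then check that subgroups of the $2$-groups in Tables~\ref{tab-main} and \ref{tab-small} again have a cyclic or dihedral maximal subgroup outside the small cases.

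For (ii) I would run through the groups of Tables~\ref{tab-main} and \ref{tab-small}. The last three columns already record which of them admit a reversing triple, a regular triple, or a rotary pair, so what remains is to pin down \emph{all} such triples and pairs in each admitted group, modulo the relevant equivalence (conjugacy in $\Aut(G)$ together with the few ``diagram'' symmetries of the type). For the cyclic, abelian and modular families this is routine; the real work is in the dihedral, semidihedral, $\D_{2^{\ell+1}}\times\ZZ_2$ and $\ZZ_{2^{\ell+1}}{:}\ZZ_2^2$ families, where one must sort the conjugacy classes of involutions and the orders of products of pairs of involutions, using the explicit shape of $\Aut(G)$ in Table~\ref{tab-main} to carry out the identification. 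This bookkeeping is exactly the content of Propositions~\ref{2gp-rev-maps}, \ref{2gp-reg-maps} and \ref{2gp-rot-maps}, and I expect it to be the \textbf{main obstacle}: the orbit count of generating triples is delicate precisely for the groups whose automorphism groups are the most complicated, and one must also discard the ``non-geometric'' triples, namely those for which the formal alternating sum $|V|-|E|+|F|$ exceeds $2$ and so cannot be the Euler characteristic of a closed surface.

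Finally, for each surviving family of pairs $(G,\calM)$ I would compute $\chi(\calM)=|V|-|E|+|F|$ directly from the orders of $G_v, G_e, G_f$ attached to the data. Since $|G|$ is a power of $2$ and these orders are small powers of $2$ (or $2d$, $2d^{*}$ for the dihedral stabilizers), $\chi(\calM)$ is in each case an alternating sum of at most three powers of $2$, with coincidences producing cancellation or doubling; a short case check then shows the only values that arise are $1$, $2$, $2-2^{\ell}$, and $2-2^{\ell}+2^{s}$ with $\ell\ge s>1$, which is the last assertion of the theorem.
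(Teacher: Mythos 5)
Your proposal follows essentially the same route as the paper: reduce to Theorem \ref{thm:p-gps} via the constraint $4\nmid\chi(\calM)$ (the paper does this in Lemma \ref{arc-map} by quoting \cite[Lemmas~2.2 and 2.5]{HLZZ}, which is the second of your two suggested arguments for part (i)), then enumerate the reversing triples, regular triples and rotary pairs up to $\Aut(G)$-equivalence for each surviving group and read off $\chi(\calM)$, which is precisely the content of Propositions \ref{2gp-rev-maps}, \ref{2gp-reg-maps} and \ref{2gp-rot-maps}. The only inaccuracy is your remark about discarding ``non-geometric'' triples: every generating triple of involutions (resp.\ rotary pair) determines a genuine map on a closed surface, so the alternating sum never exceeds $2$ and nothing needs to be discarded.
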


\noindent{\bf Remark.} It seems a non-trivial problem to identify integers $\ell\ge s>1 $ such that $ 2-2^{\ell} $ or $ 2-2^{\ell}+2^{s} $ is square free. All \textit{Mersenne numbers}, i.e., $ 2^p-1 $ with $ p $ a prime, are conjectured to be square-free. Meanwhile, there do exist some integers $ d $ such that $ 2^d-1 $ is divisible by a square $ x^2 $ as follows (see {\sf OEIS, A237043}).
\begin{center}
	\scalebox{0.8}{$\begin{array}{c|cccccc}
		\hline
		d & 6n & 20n & 21n & 110n & 136n & \cdots\\
		x & 3 & 5 & 7 & 11 & 17 &  \cdots\\
		\hline
	\end{array}$}
\end{center}

\section{$2$-Groups having a cyclic maximal subgroup} \label{sec:p-gps}

Denote by $\Q_{2^{\ell+1}}=\l a,c\r$ the {\it generalized quaternion group} of order $2^{\ell+1}$, where
\[{\mbox{$|a|=2^\ell\geqslant 2^2$, $a^{2^{\ell-1}}=c^2$,\ $a^c=a^{-1}$.}}\]
Then $c^2$ is the only involution, and $\Q_{2^{\ell+1}}=\ZZ_{2^\ell}.\ZZ_2$ is a non-split extension.

Denote by $ \SD_{2^{\ell{+1}}}=\l a,b\r=\l a\r{:}\l b\r $ the {\it semi-dihedral group} of order $2^{\ell+1}$, where
\[{|a|=2^\ell\geqslant 2^2,\ |b|=2,\ a^b=a^{2^{\ell-1}-1}.}\]
\hspace{0.7em} Let $A,B$ be finite groups such that the centers $\Z(A),\Z(B)$ have isomorphic subgroups $C_1,C_2$, respectively.
Let $\varphi$ be an isomorphism from $C_1$ to $C_2$, and let $C=\{(c,c^\varphi)\mid c\in C_1\}$.
Then $C$ is a normal subgroup of $A\times B$.
The factor group $(A\times B)/C$ is called a {\it central product} of $A$ and $B$, denoted by $A\circ_CB$,
and sometimes simply by $A\circ B$ if $C$ is equal to the center of one of the two groups.\vs

Let $ G $ be a $2$-group that has a maximal cyclic subgroup. The small-order case $ |G|\le 2^3 $ is easy to treat. We conclude that $ (G,\Aut(G)) $ is one of the following:
\[(\ZZ_4,\ZZ_2),\,(\ZZ_2^2,\S_3),\,(\ZZ_8,\ZZ_2^2),\,(\ZZ_4\times\ZZ_2,\D_8),\,(\D_8,\D_8),\,(\Q_8,\S_4).\]

\begin{proposition}\label{2-gp-cycle}
Let $G$ be a finite $2$-group of order $ |G|\ge 2^4 $ that has a cyclic maximal subgroup. 
Then $ G $ is one of the groups listed in the following table, with $\Aut(G) $ as shown. In particular, if $G$ is generated by involutions, then $ G $ is dihedral.

\begin{table}[h]
	\newcommand{\tabincell}[2]{\begin{tabular}{@{}#1@{}}#2\end{tabular}}
	\centering
	\caption{\small $ 2 $-groups having a cyclic maximal subgroup}\label{tab-cyclic}
	\scalebox{0.8}{
		\begin{tabular}{cccc}
			\toprule[1pt]
			$G$ & {\rm representation of $ G $} & {\rm a generating set of $ \Aut(G) $} & {\rm structure of $ \Aut(G) $}\\
			\midrule[1pt]
			
			$ \ZZ_{2^\ell} $ & $G=\l a\r $, $ a^{2^\ell}=1 $ & $ \Aut(G)=\l\rho_5,\rho_{-1}\r$, $a^{\rho_x}=a^x $ & $\ZZ_{2^{\ell-2}}\times\ZZ_2$\\
			\midrule[0.5pt]
			
			$ \ZZ_{2^\ell}\times\ZZ_2 $ & \tabincell{c}{$ G=\l a,b\r$, \\ $a^{2^\ell}=b^2=1 $} & \tabincell{c}{$ \Aut(G)=\l\rho_5,\rho_{-1},\tau,\s\r $, $ (a,b)^{\rho_{x}}=(a^x,b) $, \\ $ (a,b)^{\tau}=(ab,b) $, $ (a,b)^{\s}=(a,a^{2^{\ell-1}}b) $} & $ (\ZZ_{2^{\ell-2}}\circ\D_8)\times\ZZ_2 $ \\
			\midrule[0.5pt]
			
			$ \D_{2^{\ell{+1}}} $ & \tabincell{c}{$ G=\l a,b\r$, \\ $a^{2^\ell}=b^2=1, a^b=a^{-1} $} & \tabincell{c}{$ \Aut(G)=\l \rho_5,\rho_{-1},\eta\r$,\\ $(a,b)^{\rho_x}=(a^x,b)$, $(a,b)^\eta=(a,ab)$}  & $ \Hol(\ZZ_{2^{\ell}}) $ \\
			\midrule[0.5pt]
			
			$ \SD_{2^{\ell+1}} $ & \tabincell{c}{$ G=\l a,b\r$, \\ $a^{2^\ell}=b^2=1, a^b=a^{2^{\ell-1}-1} $} & \tabincell{c}{$ \Aut(G)=\l \rho_5,\rho_{-1},\eta_0\r$,\\ $(a,b)^{\rho_x}=(a^x,b)$, $(a,b)^{\eta_0}=(a,a^2b)$} & $ \ZZ_{2^{\ell-1}}{:}\Aut(\ZZ_{2^\ell}) $\\
			\midrule[0.5pt]
			
			$ \ZZ_{2^{\ell}}{:}\ZZ_2 $ & \tabincell{c}{$ G=\l a,b\r$, \\ $a^{2^\ell}=b^2=1, a^b=a^{2^{\ell-1}+1} $} & \tabincell{c}{$\Aut(G)=\l\rho_5,\rho_{-1},\tau,\s\r$, $ (a,b)^{\rho_{x}}=(a^x,b) $, \\ $ (a,b)^{\tau}=(ab,b) $, $ (a,b)^{\s}=(a,a^{2^{\ell-1}}b) $} & $ (\ZZ_{2^{\ell-2}}\circ\D_8)\times\ZZ_2 $ \\
			\midrule[0.5pt]
			
			$ \Q_{2^{\ell+1}} $ & \tabincell{c}{$ G=\l a,c\r$, \\ $a^{2^\ell}=1, a^{2^{\ell-1}}=c^2, a^c=a^{-1} $} & \tabincell{c}{$ \Aut(G)=\l \rho_5,\rho_{-1},\eta\r$,\\ $(a,c)^{\rho_x}=(a^x,c)$, $(a,c)^\eta=(a,ac)$} & $ \Hol(\ZZ_{2^{\ell}}) $ \\
			\bottomrule[1pt]
	\end{tabular}}
\end{table}
\end{proposition}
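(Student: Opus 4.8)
The plan is to first pin down the isomorphism types and then compute each automorphism group. A $2$-group of order at least $2^4$ with an index-$2$ cyclic subgroup is, by a classical theorem (the $p=2$ companion of \cite[5.3.4]{Robinson}), isomorphic to exactly one of $\ZZ_{2^\ell}$, $\ZZ_{2^\ell}\times\ZZ_2$, $\D_{2^{\ell+1}}$, $\SD_{2^{\ell+1}}$, the modular group $\ZZ_{2^\ell}{:}\ZZ_2$, or $\Q_{2^{\ell+1}}$; I will quote this, noting that a self-contained proof only requires writing $G=\langle a\rangle.\langle b\rangle$ with $|a|=2^\ell$ and $a^b=a^t$, using $t^2\equiv1\pmod{2^\ell}$ to get $t\in\{\pm1,\,2^{\ell-1}\pm1\}$, and distinguishing split from non-split extensions. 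The substance of the proposition is the table of automorphism groups, which I handle in two groups of cases.

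\textbf{Cyclic, dihedral, semi-dihedral, quaternion.} In each of these $\langle a\rangle$ is the \emph{unique} cyclic subgroup of order $2^\ell$ (outside it every element has order $2$ for $\D$, order dividing $4$ for $\SD$ and $\Q$), hence is characteristic, and restriction gives $\pi\colon\Aut(G)\to\Aut(\langle a\rangle)\cong(\ZZ/2^\ell\ZZ)^\times$. One checks $\pi$ is onto, since $a\mapsto a^x,\ b\mapsto b$ respects the defining relation for every odd $x$; this finishes the cyclic case. The kernel consists of the maps $a\mapsto a,\ b\mapsto a^kb$ that are automorphisms, and the requirements that $a^kb$ have the same order as $b$ and act on $a$ as $b$ does force $k$ to range over $\ZZ/2^\ell\ZZ$ (for $\D$ and $\Q$) or over $2\ZZ/2^\ell\ZZ$ (for $\SD$); moreover the lift $a\mapsto a^x,\ b\mapsto b$ conjugates the kernel element indexed by $k$ to the one indexed by $xk$. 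This identifies $\Aut(G)$ with $\Hol(\ZZ_{2^\ell})$ for $\D_{2^{\ell+1}}$ and $\Q_{2^{\ell+1}}$, and with $\ZZ_{2^{\ell-1}}{:}\Aut(\ZZ_{2^\ell})$ for $\SD_{2^{\ell+1}}$.

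\textbf{The two mixed cases.} For $G=\ZZ_{2^\ell}\times\ZZ_2$ and $G=\ZZ_{2^\ell}{:}\ZZ_2$, the subgroup $\langle a\rangle$ is not characteristic ($\langle ab\rangle$ is a second index-$2$ cyclic subgroup), so I count automorphisms directly: $\phi(a)$ can be any of the $2^\ell$ elements of order $2^\ell$, while $\phi(b)$ can be either of the two involutions other than $a^{2^{\ell-1}}$ (the latter being forced equal to $\phi(a)^{2^{\ell-1}}$), and each of these $2^{\ell+1}$ choices does yield an automorphism. It remains to verify that $\rho_5,\rho_{-1},\tau,\sigma$ are automorphisms, that they generate (immediate from the order $2^{\ell+1}$), and to extract the structure: $\rho_5$ and $\rho_{-1}$ are central, $\tau$ and $\sigma$ are involutions with $(\tau\sigma)^2=\rho_5^{2^{\ell-3}}$ (so $\langle\tau,\sigma\rangle\cong\D_8$ meeting $\langle\rho_5\rangle$ in its centre), and $\rho_{-1}$ is a central involution not lying in $\langle\rho_5\rangle\langle\tau,\sigma\rangle$; hence $\Aut(G)=\bigl(\langle\rho_5\rangle\circ\langle\tau,\sigma\rangle\bigr)\times\langle\rho_{-1}\rangle\cong(\ZZ_{2^{\ell-2}}\circ\D_8)\times\ZZ_2$, which in particular recovers the coincidence $\Aut(\ZZ_{2^\ell}\times\ZZ_2)\cong\Aut(\ZZ_{2^\ell}{:}\ZZ_2)$. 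The last assertion of the proposition is then a finite check over the six types: the involutions generate $\langle a^{2^{\ell-1}}\rangle\cong\ZZ_2$ in $\ZZ_{2^\ell}$, the Klein four-group $\{1,a^{2^{\ell-1}},b,a^{2^{\ell-1}}b\}$ in $\ZZ_{2^\ell}\times\ZZ_2$ and in $\ZZ_{2^\ell}{:}\ZZ_2$, the index-$2$ subgroup $\langle a^2,b\rangle$ in $\SD_{2^{\ell+1}}$, and just $\ZZ_2$ in $\Q_{2^{\ell+1}}$, whereas $\D_{2^{\ell+1}}=\langle b,ab\rangle$; so $G$ is generated by involutions exactly when it is dihedral.

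\textbf{Main obstacle.} The delicate step is the mixed-case structure: correctly identifying the order-$2^{\ell+1}$ group $\Aut(G)$ as $(\ZZ_{2^{\ell-2}}\circ\D_8)\times\ZZ_2$ by checking that $\rho_5,\rho_{-1},\tau,\sigma$ satisfy exactly its relations (their orders, which of them are central, the identity $(\tau\sigma)^2=\rho_5^{2^{\ell-3}}$, and that no degeneration occurs), together with the recurring point throughout that the evident automorphisms exhaust $\Aut(G)$ rather than a proper subgroup — here forced by the order counts, but the bookkeeping is where the work sits.
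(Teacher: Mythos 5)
Your overall route is the same as the paper's: quote the classification of $2$-groups with a cyclic maximal subgroup, then compute $\Aut(G)$ family by family, identifying the evident automorphisms and showing by an order count that they exhaust $\Aut(G)$. The only packaging difference is that for $\D_{2^{\ell+1}}$, $\SD_{2^{\ell+1}}$ and $\Q_{2^{\ell+1}}$ you organize the computation around the restriction homomorphism $\Aut(G)\to\Aut(\langle a\rangle)$ and its kernel, which is a clean and correct way to get $\Hol(\ZZ_{2^\ell})$ and $\ZZ_{2^{\ell-1}}{:}\Aut(\ZZ_{2^\ell})$; the final involution check is also fine.

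However, there is a genuine error at precisely the step you single out as the main obstacle, in the modular case $G=\ZZ_{2^\ell}{:}\ZZ_2$ with $a^b=a^{2^{\ell-1}+1}$. You assert that $\rho_{-1}$ is central in $\Aut(G)$ and conclude $\Aut(G)=\bigl(\langle\rho_5\rangle\circ\langle\tau,\sigma\rangle\bigr)\times\langle\rho_{-1}\rangle$. In this group $\rho_{-1}$ does \emph{not} commute with $\tau$: writing $a_0=a^{2^{\ell-1}}$, one has $(ab)^2=aa^b=a_0a^2$, hence $(ab)^ib=a_0^{(i-1)/2}a^i$, so $\tau\rho_i\tau=\rho_i$ only for $i\equiv 1\ (\mod 4)$, while $\tau\rho_{-1}\tau=\rho_{-1+2^{\ell-1}}\ne\rho_{-1}$. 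Consequently the product you display is not an internal direct product, and your structural identification is unjustified for this family (it is valid, as you wrote it, only for the abelian case $\ZZ_{2^\ell}\times\ZZ_2$, where $\tau\rho_i\tau=\rho_i$ for all odd $i$). The isomorphism type $(\ZZ_{2^{\ell-2}}\circ\D_8)\times\ZZ_2$ is nevertheless correct; the paper repairs exactly this point by changing generators: since $\sigma^\tau=\rho_{1+2^{\ell-1}}\sigma$ and $\rho_{-1}^\tau=\rho_{-1+2^{\ell-1}}$, the element $\rho_{-1}\sigma$ satisfies $(\rho_{-1}\sigma)^\tau=\rho_{-1+2^{\ell-1}}\rho_{1+2^{\ell-1}}\sigma=\rho_{-1}\sigma$ and is the central involution serving as the $\ZZ_2$ direct factor, while the $\D_8$ constituent of the central product is taken to be $\langle\rho_{-1},\tau\rangle$ with $(\rho_{-1}\tau)^2=\rho_{1+2^{\ell-1}}$. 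So the two mixed cases really do require different computations at this stage and cannot be dispatched by one identical argument.
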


\begin{proof}
According to a classic result \cite[5.3.4]{Robinson}, finite $ 2 $-groups that have a cyclic maximal subgroup are precisely those listed in the table. Let $ G $ be one of the groups with its given representation.

The automorphism groups of cyclic or dihedral groups are well-known. For cyclic groups $ G=\ZZ_{2^{\ell}} $, we have $\Aut(G)=\{\rho_{i}\,|\,\rho_i:a\rightarrow a^i,\, 0\le i<2^{\ell},\, 2\nmid i\} $, and further, the structure of $ \Aut(G) $ is known as follows (see \cite[Theorem~5.44]{Rotman}): \[\Aut(G)=\l\rho_5\r\times\l\rho_{-1}\r\cong\ZZ_{2^{\ell-2}}\times\ZZ_2,\] where the subgroup $ \l \rho_5\r $ contains all $ \rho_i $ with $ i \equiv 1\, (\mod 4) $. For dihedral groups $ G=\D_{{2^{\ell+1}}} $, we have $ \Aut(G)= \{g_{i,j}\,|\,g_{i,j}:(a,b)\rightarrow(a^i,a^jb),\, 0\le i,j<2^{\ell},\,2\nmid i\} $, and it is easy to see $ \Aut(G)=\l g_{1,j}\r{:}\l g_{i,0}\r $. Let $ \eta=g_{1,1} $, $ \rho_{i}=g_{i,0} $. Then \[\Aut(G)=\l \eta\r{:}\l\rho_{5},\,\rho_{-1}\r\cong\ZZ_{2^\ell}{:}\Aut(\ZZ_{2^\ell})=\Hol(\ZZ_{2^\ell}).\]

The case $ G=\Q_{{2^{\ell+1}}} $ is similar as the dihedral case. Let $ g $ be an automorphism. The images $a^g$ and $c^g$ are of order $ 2^{\ell}$ and $ 4 $, respectively. Note that, $| a^\lambda c|=4 $ for $ 0\le\lambda <2^{\ell} $. Thus, $(a^g,c^g)=(a^i,a^jc) $, where $ 0\le i,j< 2^{\ell}$ and $ 2\nmid i $.

Suppose $ G=\SD_{{2^{\ell+1}}} $. Then the group $ \Aut(G) $ is given in \cite[Theorem~4.6 II]{2-gp-split} as $ \Aut(G)= \{g_{i,j}\,|\,g_{i,j}:(a,b)\rightarrow(a^i,a^{j}b),\,0\le i,j<2^{\ell},\,2\nmid i,\,2\mid j\} $, and then $ \Aut(G)=\l g_{1,j}\r{:}\l g_{i,0}\r $. Let $ \eta_0=g_{1,2} $, $ \rho_{i}=g_{i,0} $. Then \[\Aut(G)=\l\eta_0\r{:}\l \rho_{5},\rho_{-1}\r\cong\ZZ_{2^{\ell-1}}{:}\Aut(\ZZ_{2^\ell}).\] Note that, $ \Aut(G) $ is regarded as a normal subgroup of $ \Hol(\ZZ_{2^{\ell}}) $ of index $ 2 $.

The remaining two cases $ G= \ZZ_{2^\ell}\times\ZZ_2 $ and $ G=\ZZ_{2^\ell}{:}\ZZ_2 $ are similar to each other. The general structure of $ \Aut(G) $, where $ G $ is a direct product of groups, or $ G $ is a split metacyclic 2-group, is given in \cite{Aut-directproduct} or \cite{2-gp-split}, respectively. Here we provide a more refined description of the group $ \Aut(G) $.

Suppose (i) $ G=\ZZ_{2^\ell}\times\ZZ_2 $ or (ii) $ G=\ZZ_{2^\ell}{:}\ZZ_2 $, with the group representation as given in the table. Let $ g\in\Aut(G) $ and let $ a_0=a^{2^{\ell-1}} $. The images $ a^g,b^g $ satisfy:\[ \mbox{$|a^g|=2^\ell$, $|b^g|=2$ and $ (a^g)^{b^g}=a^g$ or $a_0a^g$,}\] respectively for (i) or (ii). For $ 0\le m<2^{\ell} $, $(a^mb)^2=a^m(a^m)^b=a^{2m} $ or $ a_0^ma^{2m} $, so that, $ |a^mb|=2^\ell$ if $ 2\nmid m $, and $ |a^mb|=2 $ if $ m=0 $ or $ 2^{\ell-1} $. Thus, the possible values for $ a^g,b^g $ are as follows: \[\mbox{$ a^g\in\{a^i, a^ib\,|\, 0\le i<2^{\ell},\, 2\nmid i\}$ and $ b^g\in\{b, a_0b \}$,}\] and $ a^g,b^g $ can surely take all such values, as for any odd $ i $, respectively we have \[\mbox{$ (a^i)^{a_0b}=(a^i)^b=a^i $ or $ a_0a^i $, and $ (a^ib)^{a_0b}=(a^ib)^b=a^ib $ or $ a_0a^ib $.}\] Since $ g $ is determined by the images $ a^g ,b^g$, we have $|\Aut(G)|=2\cdot2^{\ell-1}\cdot2=2^{\ell+1}$.

Now, define automorphisms $\rho_i\,(0\le i<2^{\ell}),\tau,\s $ as follows: \[\mbox{$ (a,b)^{\rho_i}=(a^i,b) $, $ (a,b)^\tau=(ab,b) $ and $ (a,b)^\s=(a,a_0b) $.}\] Let $ X=\l\rho_5,\rho_{-1},\tau,\s\r\le\Aut(G) $. Then $ \l\rho_5,\rho_{-1}\r=\l\rho_5\r\times\l \rho_{-1}\r\cong\Aut(\l a\r) $, and the following relations are satisfied:
\[(a,b)^{\tau\s\tau\s}=(a_0a,b)=(a,b)^{\rho_{1+2^{\ell-1}}},\,(a,b)^{\s\rho_i\s}=(a^i,b)=(a,b)^{\rho_i}.\] Particularly for case (i), $ a^b=b $, we have \[ (a,b)^{\tau\rho_i\tau}=((ab)^ib,b)=(a^i,b)=(a,b)^{\rho_i},\] so that, $ \l\rho_5\r $, $ \l\tau,\s\r $ and $\l\rho_{-1}\r$ are pair-wise commutative, with $ (\tau\s)^2=\rho_{1+2^{\ell-1}} $ being of order $ 2 $ and lying in $\l\rho_5\r\cap\l\tau,\s\r$. Thus, $ \l\tau,\s\r\cong \D_8 $, and 
\[X=(\l \rho_5\r\times\l \tau,\s\r/\l \rho_{1+2^{\ell-1}}(\tau\s)^2 \r)\times\l \rho_{-1}\r\cong (\ZZ_{2^{\ell-2}}\circ\D_8)\times\ZZ_2.\]
\noindent Meanwhile for case (ii), $ a^b=a_0b $, we have \[(a,b)^{\tau\rho_i\tau}=((ab)^ib,b)=(((ab)^2)^{\frac{i-1}{2}}a,b)=(a_0^{\frac{i-1}{2}}a^i,b)=
\begin{cases}
	(a,b)^{\rho_i},& i\equiv 1\, (\mod 4),\\
	(a,b)^{\rho_{i+2^{\ell-1}}},& i\equiv -1\, (\mod 4),\\
\end{cases}\] and further, \[(\rho_{-1}\s)^\tau=\rho_{-1}^\tau\s^\tau=\rho_{-1+2^{\ell-1}}.\rho_{1+2^{\ell-1}}\s=\rho_{-1}\s,\]so that, $ X=\l\rho_5,\rho_{-1},\tau,\s\r=\l\rho_5,\rho_{-1},\tau,\rho_{-1}\s \r, $ where $ \l\rho_5\r $, $ \l\rho_{-1},\tau\r $ and $\l\rho_{-1}\s \r$ are pair-wise commutative, with $ (\rho_{-1}\tau)^2=\rho_{-1}\rho_{-1}^\tau=\rho_{1+2^{\ell-1}} $ being of order $ 2 $ and lying in $\l\rho_5\r\cap\l\rho_{-1},\tau\r$. Thus, $ \l\rho_{-1},\tau\r\cong\D_8 $, and
\[X=(\l \rho_5\r\times\l \rho_{-1},\tau\r/\l \rho_{1+2^{\ell-1}}(\rho_{-1}\tau)^2\r)\times\l \rho_{-1}\s\r\cong (\ZZ_{2^{\ell-2}}\circ\D_8)\times\ZZ_2.\]

Finally, since $ |X|=2^{\ell+1}=|\Aut(G)| $, we have $ X=\Aut(G) $.\vs

At last, assume that $G$ is generated by involutions. Then $ G\ne \ZZ_{{2^\ell}}$, $\ZZ_{{2^\ell}}\times\ZZ_2$, $ \Q_{{2^{\ell+1}}} $. Suppose $ G=\SD_{2^{\ell{+1}}}=\l a\r{:}\l b\r $ with $a^b=a^{2^{\ell-1}-1}$. Then $(a^ib)^2=a^i(a^i)^b=a^ia^{i(2^{\ell-1}-1)}=a^{i(2^{\ell-1})}$, and so $ a^ib $ is an involution if and only if $ i $ is even. Thus, all involutions lie in the subgroup $\l a^2,b\r$, a contradiction.
Suppose $G=\l a\r{:}\l b\r=\ZZ_{2^\ell}{:}\ZZ_2$ with $a^b=a^{2^{\ell-1}+1}$. Then it is shown that all involutions lie in $\l a^{2^{\ell-1}},b\r$. At last, dihedral groups $ \D_{{2^{\ell+1}}} $ can be generated by involutions.
\end{proof}

\section{2-Groups having a dihedral maximal subgroup}

In this section, we first investigate three special families of $ 2 $-groups and then prove that these are all possible groups obtainable in this case.

The first family is given below; refer to \cite[Definition 4.2 \& Lemma~4.3]{HLZZ}.

\begin{definition}\label{defi-Q}
	{\rm
	Let $\Q_{2^{\ell+1}}=\l a,c\,|\,a^{2^\ell}=1,\,a^{2^{\ell-1}}=c^2,\,a^c=a^{-1}\r$, $\ell\ge2$. Let
	\[G=\Q_{2^{\ell+1}}\circ\ZZ_4=\l a,c\r\circ \l d\r=(\l a,c\r\times \l d\r)/\l c^2d^2\r.\]
	Note that, the only involutions contained in $\l a,c\r$ and $\l d\r$ are $a^{2^{\ell-1}}=c^2$ and $d^2$, respectively, which are actually identical in $ G $ as $c^2d^2=1$.}
\end{definition}

\begin{lemma}\label{Z-circ-Q-hypo-2} 
	Let $G=\Q_{2^{\ell+1}}\circ\ZZ_4=\l a,c\r\circ \l d\r$, defined in {\rm Definition \ref{defi-Q}}. Then
	\begin{enumerate}[{\rm(1)}]
		\item $\l a,cd\r=\l a\r{:}\l cd\r\cong\D_{2^{\ell+1}}$ is a dihedral subgroup of $ G $ of index $2${\rm;}\vs
		
		\item $G$ is not generated by two elements{\rm;}\vs
		
		\item the involutions of $ G $ are $a^{2^{\ell-1}}$, $a^{2^{\ell-2}}d^{\pm 1}$ and $a^icd$, where $0\leqslant i< 2^{\ell}${\rm;}\vs
		
		\item if $x,y,z$ are involutions generating $G$, then $\{x,y,z\}=\{ a^{2^{\ell-2}}d,a^icd, a^jcd\}$ or $ \{ a^{2^{\ell-2}}d^{-1},a^icd, a^jcd\} $,
		where $0\leqslant i,j< 2^{\ell}$ and ``$ i-j $ is odd''; in particular, the three involutions are pairwise non-commutative.
	\end{enumerate}
\end{lemma}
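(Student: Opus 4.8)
I would work throughout in the normal form for $G$. Since $\langle d\rangle\cong\ZZ_4$ is a central factor and the defining relation $c^2d^2=1$ forces $d^2=c^{-2}=c^2=a^{2^{\ell-1}}$, every element of $G$ can be written \emph{uniquely} as $a^ic^\varepsilon d^\delta$ with $0\le i<2^\ell$ and $\varepsilon,\delta\in\{0,1\}$; hence $|G|=2^{\ell+2}$, the element $z_0:=a^{2^{\ell-1}}=c^2=d^2$ is a central involution, and the only relations I need are $a^c=a^{-1}$, that $d$ is central, and $c^2d^2=1$. Part~(1) is then immediate: $(cd)^2=c^2d^2=1$ and $a^{cd}=a^c=a^{-1}$, while $cd\notin\langle a\rangle$ by the normal form, so $\langle a,cd\rangle=\langle a\rangle{:}\langle cd\rangle\cong\D_{2^{\ell+1}}$ has order $2^{\ell+1}$, i.e.\ index $2$ in $G$.

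For part~(2), I would pass to $\bar G=G/N$ with $N=\langle a^2\rangle$. Here $\langle a\rangle\trianglelefteq G$, so $N\trianglelefteq G$; in $\bar G$ the image $\bar a$ has $\bar a^2=1$ and is inverted by $\bar c$, so $\bar a$ commutes with $\bar c$, and $\bar d$ is central, so $\bar G$ is abelian; moreover each of $\bar a,\bar c,\bar d$ has order at most $2$ (using $c^2=d^2=z_0=(a^2)^{2^{\ell-2}}\in N$). Since $|N|=2^{\ell-1}$, we get $\bar G$ elementary abelian of order $2^{\ell+2}/2^{\ell-1}=8$, i.e.\ $\bar G\cong\ZZ_2^3$; as a quotient of a $2$-generated group is $2$-generated while $\ZZ_2^3$ is not, $G$ is not $2$-generated. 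For part~(3) I would just compute squares in the normal form: $(a^id^\delta)^2=a^{2i+\delta 2^{\ell-1}}$ and $(a^icd^\delta)^2=c^2d^{2\delta}=z_0^{\,1+\delta}$, and solve each equation ``$=1$''; this yields exactly the involutions $z_0$, then $a^{2^{\ell-2}}d$ and $a^{2^{\ell-2}+2^{\ell-1}}d=a^{2^{\ell-2}}d^{-1}$, and then $a^icd$ for $0\le i<2^\ell$. In the same breath I would record the two identities needed below: $(a^icd)(a^jcd)=a^{i-j}$ (using $ca^jc^{-1}=a^{-j}$ and $c^2d^2=1$), and $[\,a^{2^{\ell-2}}d^{\pm1},\,a^icd\,]=z_0\ne 1$.

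Part~(4) is the substantive step. Let $x,y,z$ be involutions with $\langle x,y,z\rangle=G$. By~(2) they are pairwise distinct, and their images generate $\bar G\cong\ZZ_2^3$, hence form a basis: nonzero and linearly independent. Now $\{\bar a,\bar c,\bar d\}$ is also a basis of $\bar G$, and by~(3) the images of the involutions of $G$ in these coordinates are: $z_0\mapsto 0$; $a^{2^{\ell-2}}d^{\pm1}\mapsto w_u$, the \emph{same} vector for both signs (since $\bar d^{-1}=\bar d$), where $w_u=\bar d$ if $\ell\ge3$ and $w_u=\bar a+\bar d$ if $\ell=2$; and $a^icd\mapsto w_0:=\bar c+\bar d$ or $w_1:=\bar a+\bar c+\bar d$ according as $i$ is even or odd. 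Thus the only \emph{nonzero} images of involutions are the three vectors $w_u,w_0,w_1$, and a quick check (distinguishing $\ell=2$ from $\ell\ge3$) shows $w_u,w_0,w_1$ are linearly independent. Hence $\{\bar x,\bar y,\bar z\}$, a $3$-element linearly independent subset of $\{w_u,w_0,w_1\}$, equals $\{w_u,w_0,w_1\}$; pulling back, the element with image $w_u$ must be $a^{2^{\ell-2}}d$ or $a^{2^{\ell-2}}d^{-1}$, the one with image $w_0$ is some $a^icd$ with $i$ even, and the one with image $w_1$ is some $a^jcd$ with $j$ odd, so $i-j$ is odd. This is exactly the asserted description, and pairwise non-commutativity follows from the recorded identities: $[\,a^{2^{\ell-2}}d^{\pm1},a^icd\,]=z_0\ne1$, and $(a^icd)(a^jcd)=a^{i-j}\ne a^{-(i-j)}=(a^jcd)(a^icd)$ because $a^{2(i-j)}\ne1$ when $i-j$ is odd and $\ell\ge2$.

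The main obstacle is part~(4), and the device that makes it clean is the abelianized quotient $\bar G=G/\langle a^2\rangle\cong\ZZ_2^3$: all $2^\ell+3$ involutions of $G$ collapse into only four cosets there — the identity for $z_0$, one shared coset for both $a^{2^{\ell-2}}d^{\pm1}$, and two cosets for the $a^icd$ according to the parity of $i$ — which turns the hypothesis ``three involutions generate $G$'' into the linear-algebra statement ``their images are a basis'' and thereby pins the generating set down up to the expected ambiguities ($\pm1$ in the exponent of $d$, and the choice of the two indices $i,j$). The one mild nuisance is that the image of $a^{2^{\ell-2}}d$ depends on the parity of $2^{\ell-2}$, so the verification that $w_u,w_0,w_1$ are independent must briefly treat $\ell=2$ separately; this is routine.
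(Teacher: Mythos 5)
Your proof is correct and complete. Note that the paper itself does not prove this lemma at all: it defers to \cite[Definition 4.2 \& Lemma~4.3]{HLZZ} and merely adds a remark that the cited lemma omitted the condition ``$i-j$ is odd''. Your argument via the quotient $\bar G=G/\langle a^2\rangle\cong\ZZ_2^3$ supplies a self-contained proof, and in particular your basis computation in $\bar G$ (the images $w_0=\bar c+\bar d$ for $i$ even and $w_1=\bar a+\bar c+\bar d$ for $i$ odd must both occur) is precisely what yields the parity condition that the paper flags as missing from the reference; the only point needing the minor case split you already make is that the image of $a^{2^{\ell-2}}d^{\pm1}$ is $\bar d$ for $\ell\ge3$ but $\bar a+\bar d$ for $\ell=2$, and independence of $w_u,w_0,w_1$ holds either way.
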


\noindent{\bf Remark.} The condition {``$ i-j $ \textit{is odd}"} is necessary, yet \cite[Lemma~4.3]{HLZZ} missed this.

\begin{lemma}\label{ZQ-aut}
    Let $G=\Q_{2^{\ell+1}}\circ \ZZ_4=\l a,c\r\circ \l d\r$, defined in {\rm Definition \ref{defi-Q}}. Then $ \Aut(G)=\Aut(\l a,c\r)\times\l\tau\r\cong\Aut(\Q_{{2^{\ell+1}}})\times\ZZ_2$, where $ \tau $ is with $ (a,c,d)^{\tau}=(a,c,d^{-1}) $.
\end{lemma}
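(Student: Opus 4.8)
The plan is to compute $\Aut(G)$ directly from the action of an arbitrary automorphism on a carefully chosen generating set, using the structural information already recorded in Lemma~\ref{Z-circ-Q-hypo-2}. Since $G=\l a,c\r\circ\l d\r$ is \emph{not} 2-generated (part (2)), one must track the images of all three generators $a,c,d$. First I would pin down which elements of $G$ can serve as the image of each generator. The element $a$ has order $2^\ell$ and lies in the cyclic maximal subgroup $\l a\r$ of the characteristic subgroup $\l a,c\r$ (characteristic because, e.g., it is the unique subgroup isomorphic to $\Q_{2^{\ell+1}}$, or can be described via the involutions listed in Lemma~\ref{Z-circ-Q-hypo-2}(3)); the elements $c,cd$ have order $4$, while $d$ has order $4$ with $d^2=a^{2^{\ell-1}}=c^2$ the unique central involution of $\l a,c\r$ and in fact the unique square that is a ``global'' obstruction. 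The key point is that $\l a,c\r=\Q_{2^{\ell+1}}$ is characteristic in $G$: I would argue this by noting that $G$ has exactly one subgroup isomorphic to $\Q_{2^{\ell+1}}$, or alternatively identify $\l a,c\r$ as generated by the non-central involutions' ``shadows'' — more cleanly, $\l a,c\r = \l g\in G : g^2\in\l a^{2^{\ell-1}}\r,\ |g|\le 2^\ell,\ \dots\r$, but the cleanest route is: the set of involutions of $G$ is $\{a^{2^{\ell-1}}\}\cup\{a^{2^{\ell-2}}d^{\pm1}\}\cup\{a^icd: 0\le i<2^\ell\}$ by Lemma~\ref{Z-circ-Q-hypo-2}(3), and $\l a,c\r$ is the unique index-2 subgroup containing exactly one involution, hence characteristic.

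Next, for $g\in\Aut(G)$: since $\l a,c\r$ is characteristic, $g$ restricts to an automorphism of $\Q_{2^{\ell+1}}$, and I would invoke the known description of $\Aut(\Q_{2^{\ell+1}})$ (from Proposition~\ref{2-gp-cycle}, the $\Q_{2^{\ell+1}}$ row) to say $(a,c)^g=(a^i,a^jc)$ for suitable $i,j$. It remains to determine $d^g$. Since $d^2=c^2=a^{2^{\ell-1}}$ is fixed, $d^g$ is an element of order $4$ with square $a^{2^{\ell-1}}$. Using the explicit element orders in $G$ (computable from the relations, as in Lemma~\ref{Z-circ-Q-hypo-2}(3)-(4)), the elements of order $4$ squaring to $a^{2^{\ell-1}}$ are exactly the powers/cosets: $a^{2^{\ell-2}}d^{\pm1}$ together with... actually the full list of order-$4$ elements with square $a^{2^{\ell-1}}$ is $\{a^{\pm 2^{\ell-2}}\}\cup\{a^{2^{\ell-2}}d^{\pm1}, a^{-2^{\ell-2}}d^{\pm1}\}\cup\{a^icd: i\}$ — but $d^g$ must map to something outside $\l a,c\r$ (as $g$ is bijective and $\l a,c\r^g=\l a,c\r$ forces $d^g\notin\l a,c\r$), so $d^g\in \{a^m d^{\pm1}, a^m cd\}$; a parity/order check then forces $d^g = a^{m}d^{\pm1}$ for some $m$, and compatibility with $d^2=a^{2^{\ell-1}}$ and centrality of $d$ modulo the identifications further constrains $m$. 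I would show the genuine freedom left is precisely a sign on $d$ (the automorphism $\tau$) together with the freedom already present in $\Aut(\l a,c\r)$, giving $|\Aut(G)| = |\Aut(\Q_{2^{\ell+1}})|\cdot 2$.

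Finally I would exhibit the direct product structure: check that $\tau:(a,c,d)\mapsto(a,c,d^{-1})$ is a well-defined automorphism (it preserves all defining relations and the relation $c^2d^2=1$ since $d^{-2}=d^2$), that each $\phi\in\Aut(\l a,c\r)$ extends to $G$ by fixing $d$ (well-defined because $\phi$ fixes $c^2=d^2$), that these extensions together with $\tau$ generate $\Aut(G)$ (by the order count above), that they commute ($\tau$ fixes $a,c$; the extended $\phi$ fixes $d$), and that $\l\tau\r\cap\Aut(\l a,c\r)=1$ inside $\Aut(G)$ (an element fixing $a,c$ and inverting $d$ is nontrivial iff $d\ne d^{-1}$, and it is not in the image of $\Aut(\l a,c\r)$ which fixes $d$). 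Hence $\Aut(G)=\Aut(\l a,c\r)\times\l\tau\r\cong\Aut(\Q_{2^{\ell+1}})\times\ZZ_2$. I expect the main obstacle to be the bookkeeping in the middle step — rigorously ruling out the possibilities $d^g = a^m cd$ and $d^g=a^m d^{\pm1}$ with ``wrong'' $m$, i.e., showing no exotic automorphism mixes the $\l d\r$-direction into the $cd$-coset; this is where one must use the order list and the commutation relations of Lemma~\ref{Z-circ-Q-hypo-2}(4) rather than just abstract characteristic-subgroup arguments.
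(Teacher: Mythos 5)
Your proposal follows essentially the same route as the paper: identify $\l a,c\r$ as the unique (hence characteristic) subgroup isomorphic to $\Q_{2^{\ell+1}}$, so that any $g\in\Aut(G)$ restricts to an automorphism of it, then pin $d^g$ down to $d^{\pm1}$, and finally assemble the direct product $\Aut(\l a,c\r)\times\l\tau\r$ (your closing verification is in fact more explicit than the paper's). The only correction needed is in your middle step: the ``full list of order-$4$ elements with square $a^{2^{\ell-1}}$'' you write down is wrong, since $a^{\pm 2^{\ell-2}}d^{\pm1}$ and $a^icd$ are involutions by Lemma~\ref{Z-circ-Q-hypo-2}(3) (which you cite), and the ``main obstacle'' you anticipate dissolves at once because $d\in\Z(G)=\l d\r\cong\ZZ_4$, so $d^g$ must be a central element of order $4$, i.e.\ $d$ or $d^{-1}$ --- which is exactly the paper's one-line argument.
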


\begin{proof} By straight calculation we have: \begin{itemize}
		\item[(i)] if $ \ell= 2 $, then $ \l a\r $, $ \l c\r $, $ \l ac\r $, $ \l d\r $ and $ \l a^2d\r $ are the only subgroups of $ G $ of order $ 4 $, where $ \l d\r $ and $ \l a^2d\r $ lie in the center $ \Z(G) $;\vs
		\item[(ii)] if $ \ell\ge 3 $, then $ \l a\r $, $ \l ad\r $ are the only two subgroups of $ G $ of order $ 2^\ell $, while the elements of order $ 4 $ are of form $ a^\lambda $, $ a^\lambda d$ or $ a^ic $, with certain $ \lambda $ and $ 0\le i<2^\ell  $.
	\end{itemize}
	It then follows in both cases that $ \l a,c\r $ is the only subgroup of $ G $ isomorphic to $ \Q_{2^{\ell+1}} $, and so it is characteristic. Let $ g\in\Aut(G) $. Then the images $ a^g,c^g\in \l a,c\r $, and meanwhile $ d^g=d $ or $ d^{-1} $, the only two central elements of order $ 4 $.
\end{proof}

The second family is given below.

\begin{definition}\label{defi-direct}
	{\rm
	Let $ G=\D_{2^{\ell+1}}\times\ZZ_2=\l a,b\r\times\l c\r$, $ \ell\ge 2 $, with representation \[G=\l a,b,c\,|\,a^{2^\ell}=b^2=c^2=1,\, a^b=a^{-1},\,a^c=a,\,b^c=b\r.\]
    }
\end{definition}

\begin{lemma}\label{Klein-I}
	Let $ G=\D_{2^{\ell+1}}\times\ZZ_2=\l a, b\r\times\l c\r$, defined in {\rm Definition \ref{defi-direct}}. Then
	\begin{itemize}
		\item[\rm (1)] $ \l a^2\r \lhd G $, $ G/\l a^2\r\cong\ZZ_2^3 $, and $G$ is not generated by two elements{\rm;}\vs
		\item[\rm (2)] the set of involutions $ \Omega=\{a_0,c,a_0c,a^{i}b,a^{i}bc\,|\,0\le i< 2^{\ell}\} $, where $ a_0=a^{2^{\ell-1}} $.
	\end{itemize}
\end{lemma}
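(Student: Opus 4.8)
The plan is to describe every element of $G$ in the normal form $a^i b^{\va} c^{\delta}$ with $0\le i<2^\ell$ and $\va,\delta\in\{0,1\}$, which is legitimate since $G=\l a,b\r\times\l c\r$ and the $a^i b^{\va}$ enumerate $\D_{2^{\ell+1}}$; all computations then reduce to the defining relations $a^b=a^{-1}$, $a^c=a$, $b^c=b$ together with $b^2=c^2=1$.

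For part (1), I would first observe that $\l a^2\r$ is the unique subgroup of order $2^{\ell-1}$ in the cyclic group $\l a\r$, hence characteristic in $\l a\r$; and $\l a\r\lhd G$, being the normal rotation subgroup of the direct factor $\D_{2^{\ell+1}}$ and centralized by $c$. Therefore $\l a^2\r\lhd G$. Modulo $\l a^2\r$ the images of $a,b,c$ all have order dividing $2$ and commute pairwise, since the only non-trivial commutator among the generators is $[a,b]=a^{-2}\in\l a^2\r$; thus $G/\l a^2\r$ is elementary abelian, and comparing orders, $|G|=2^{\ell+2}$ and $|\l a^2\r|=2^{\ell-1}$, gives $|G/\l a^2\r|=8$, so $G/\l a^2\r\cong\ZZ_2^3$. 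Finally, a two-generated group has only two-generated quotients, whereas $\ZZ_2^3$ needs three generators; hence $G$ is not generated by two elements.

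For part (2), I would split on the value of $\va$. If $\va=0$, the element $a^i c^{\delta}$ lies in the abelian subgroup $\l a\r\times\l c\r$ and squares to $a^{2i}$, which is trivial exactly when $i\in\{0,2^{\ell-1}\}$; discarding the identity, this gives the involutions $a_0$, $c$, $a_0c$. If $\va=1$, centrality of $c$ gives $(a^i b c^{\delta})^2=(a^i b)^2$, and the relation $b a^i b=a^{-i}$ forces $(a^i b)^2=1$; since $a^i b\notin\l a\r$ and $a^i b c\notin\l a,b\r$, each $a^i b$ and $a^i b c$ is a genuine involution. Assembling the two cases yields precisely the set $\Omega$ in the statement.

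I do not anticipate a real obstacle: the argument is entirely a matter of the normal form plus the three defining relations, and the only place demanding slight care is the congruence modulo $2^\ell$ that pins down when $a^{2i}=1$. In particular the ``not two-generated'' assertion is essentially free once the $\ZZ_2^3$ quotient of part (1) is established, so the main bookkeeping lies in the involution census of part (2).
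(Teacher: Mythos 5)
Your proposal is correct and follows essentially the same elementary route as the paper: the paper verifies $\l a^2\r\lhd G$ by direct conjugation (rather than via ``characteristic in a normal subgroup''), passes to the $\ZZ_2^3$ quotient for the non-two-generation claim, and obtains the involution list by combining the known involutions of the direct factors $\D_{2^{\ell+1}}$ and $\ZZ_2$, which matches your normal-form case split. No gaps.
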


\begin{proof}
	(1) As $ (a^2)^b=(a^2)^{-1} $, $ (a^2)^c=a^2 $, we have $ \l a^2\r \lhd G $, and so $ \ov G:=G/\l a^2\r=\l \ov a,\ov b,\ov c \r\cong\ZZ_2^3 $. Suppose $ G=\l x,y\r $. Then $ \ov G $ is generated by $ \ov x, \ov y $, which is impossible.
	
	(2) Let $ \Omega(X) $ be the set of involutions of $ X\leqslant G $, and let $ a_0=a^{2^{\ell-1}} $. Then $\Omega(\l a,b\r)=\{a_0,a^{i}b\,|\,0\le i< 2^{\ell}\}$ as $ \l a,b\r\cong\D_{{2^{\ell+1}}} $, and $ \Omega(\l c\r)=\{c\} $. So, $ \Omega=\{a_0,a^{i}b,c,a_0c,a^{i}bc\,|\,0\le i< 2^{\ell}\} $.
\end{proof}

\begin{lemma}\label{2-gp-aut-I}
	Let $ G=\D_{2^{\ell+1}}\times\ZZ_2=\l a, b\r\times\l c\r$, defined in {\rm Definition \ref{defi-direct}}. Then $ \Aut(G)=\l\rho_{5},\rho_{-1}, \eta,\tau,\s\r$ is generated by the following automorphisms: \[\mbox{$ (a,b,c)^{\rho_{5}}=(a^5,b,c) $, $ (a,b,c)^{\rho_{-1}}=(a^{-1},b,c) $,\,\, $ (a,b,c)^{\eta}=(a,ab,c) $,}\] \[\mbox{$(a,b,c)^\tau=(ac,b,c)$,\, $(a,b,c)^\s=(a,b,a^{2^{\ell-1}}c)$.}\] Furthermore, letting $ H=\l\rho_{5},\rho_{-1}, \eta^2\r$, then $H\cong \ZZ_{2^{\ell-1}}{:}\Aut(\ZZ_{2^\ell})$ is normal in $\Aut(G)$, while the factor group $\Aut(G)/H$ is isomorphic to $\D_8\times\ZZ_2 $. 
\end{lemma}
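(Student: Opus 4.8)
The plan is to determine $\Aut(G)$ for $G=\D_{2^{\ell+1}}\times\ZZ_2=\l a,b\r\times\l c\r$ in three stages: first pin down the possible images of the generators $a,b,c$, thereby computing $|\Aut(G)|$; second verify that the five listed maps $\rho_5,\rho_{-1},\eta,\tau,\s$ really are automorphisms and generate a subgroup of the right order; third identify the normal subgroup $H=\l\rho_5,\rho_{-1},\eta^2\r$ and the quotient $\Aut(G)/H$. For the first stage, the key structural facts are that $\l a\r$ is the unique cyclic subgroup of order $2^\ell$ in $G$ (so $\l a\r$ is characteristic and $a^g=a^{\pm i}$-type elements; more precisely $a^g$ must be a generator of $\l a\r$ composed with possible central twist), and that $\Phi(G)=\l a^2\r$ with $G/\Phi(G)\cong\ZZ_2^3$ by Lemma \ref{Klein-I}(1). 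Using Lemma \ref{Klein-I}(2), the involutions outside $\l a\r$ are $c$, $a_0c$, and the $2^{\ell+1}$ elements $a^ib, a^ibc$; an automorphism must send the generating triple $(a,b,c)$ to another generating triple of the same orders, and a short count of how many such ordered triples exist (using that $a^g$ has $2^{\ell-1}$ choices in $\l a\r$ up to the characteristic constraint, combined with the admissible images of $b$ and $c$ modulo the relations $a^b=a^{-1}$, $a^c=a$, $b^c=b$) yields $|\Aut(G)|=2^{\ell+3}$ (this matches $|H|=2^{\ell}\cdot\ldots$ times $|\D_8\times\ZZ_2|=16$, i.e. $2^{\ell-1}\cdot 2\cdot 2^{\ell-2}\cdot\cdots$ — the bookkeeping here is the routine part).

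For the second stage I would check directly that each of $\rho_5,\rho_{-1},\eta,\tau,\s$ preserves the defining relations of $G$ — e.g. for $\tau\colon(a,b,c)\mapsto(ac,b,c)$ one checks $(ac)^b=(ac)^{-1}$ since $c$ is central and $a^b=a^{-1}$, $(ac)^c=ac$, etc. — so all five lie in $\Aut(G)$. The subgroup $\l\rho_5,\rho_{-1}\r\cong\Aut(\ZZ_{2^\ell})\cong\ZZ_{2^{\ell-2}}\times\ZZ_2$ acts on the $\l a\r$-coordinate exactly as in the dihedral case of Proposition \ref{2-gp-cycle}, and $\eta$ supplies the inner-type translations $a\mapsto a$, $b\mapsto ab$, generating with the $\rho$'s a copy of $\Hol(\ZZ_{2^\ell})$ on the $\D_{2^{\ell+1}}$ factor. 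The two ``new'' generators $\tau,\s$ both fix $a$ and $b$ up to a central element and square to (or commute into) the center; I would compute the relevant commutators — $[\rho_5,\tau]$, $[\eta,\tau]$, $[\tau,\s]$, $(\tau\s)^2$, $\s^2$, $\tau^2$ — to see that $\l\tau,\s\r$ together with the central involution $\s^2$ or $(\tau\s)^2$ assembles into $\D_8\times\ZZ_2$ modulo $H$. Then $|\l\rho_5,\rho_{-1},\eta,\tau,\s\r|$ equals the order computed in stage one, forcing equality with $\Aut(G)$.

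For the third stage, $H=\l\rho_5,\rho_{-1},\eta^2\r$ is precisely the subgroup acting trivially on $G/\l a\r$ in the $b$-coordinate and trivially on the $c$-coordinate, which makes its normality transparent (it is the kernel of the action of $\Aut(G)$ on a suitable characteristic quotient such as $G/\l a^2,c\r\cong\ZZ_2^2$ intersected with the stabilizer of the $\l c\r$-type direction — I would phrase this as $H=\Ker(\Aut(G)\to\Aut(G/\l a^2\r))\cap(\text{stabilizer of }\l c\r\langle a^2\rangle/\langle a^2\rangle)$, or more cleanly just exhibit $H\lhd\Aut(G)$ by checking it is conjugation-invariant under $\tau,\s$ via the commutator relations already computed). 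That $H\cong\ZZ_{2^{\ell-1}}{:}\Aut(\ZZ_{2^\ell})$ follows from $\l\eta^2\r\cong\ZZ_{2^{\ell-1}}$ being normalized by $\l\rho_5,\rho_{-1}\r\cong\Aut(\ZZ_{2^\ell})$ with the faithful action inherited from $\Hol(\ZZ_{2^\ell})$. Finally $\Aut(G)/H$ is generated by the images of $\eta,\tau,\s$, and the relations $\bar\eta^2=1$, $[\bar\eta,\bar\tau]=[\bar\eta,\bar\s]=1$ (or whatever the commutator computation gives), together with $\l\bar\tau,\bar\s\r\cong\D_8$ and $\bar\eta$ central of order $2$, identify the quotient as $\D_8\times\ZZ_2$. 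The main obstacle I anticipate is stage three: correctly isolating the characteristic subgroup/quotient that $H$ is the kernel of (so that normality is clean rather than a brute commutator check) and then getting the extension data of $\Aut(G)/H$ exactly right — in particular distinguishing $\D_8\times\ZZ_2$ from the other order-$16$ groups requires knowing which elements of $\l\tau,\s,\eta\r$ are involutions modulo $H$, which depends delicately on whether $\s^2$ and $(\tau\s)^2$ land in $H$.
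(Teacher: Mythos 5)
Your overall strategy (parametrize the images of $a,b,c$ to count $|\Aut(G)|$, exhibit the five automorphisms, show they generate a subgroup of full order, then identify $H$ and the quotient) is the same as the paper's, but the proposal contains three concrete errors that would derail it. First, the order count is wrong: the admissible images are $a^g\in\{a^i,a^ic\}$ with $i$ odd, $b^g\in\{a^jb,a^jbc\}$ with $j$ arbitrary, and $c^g\in\{c,a_0c\}$, giving $|\Aut(G)|=2^\ell\cdot2^{\ell+1}\cdot2=2^{2\ell+2}$, not $2^{\ell+3}$. Your own consistency check already fails here, since $|H|\cdot|\D_8\times\ZZ_2|=2^{2\ell-2}\cdot2^4=2^{2\ell+2}\neq2^{\ell+3}$ for $\ell\ge2$; with the wrong target order the final step ``the five maps generate a subgroup of order $|\Aut(G)|$'' cannot close. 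Second, the structural fact you lean on for the count is false: $\l a\r$ is \emph{not} the unique cyclic subgroup of order $2^\ell$ (and hence not characteristic), since $\l ac\r$ is another such subgroup and indeed $\tau$ carries $\l a\r$ to $\l ac\r$. The paper avoids this by invoking the Bidwell--Curran--McCaughan description of automorphisms of direct products, which is what produces the extra factor of $2$ in the $a$-coordinate and the factor $2^\ell$ in the $b$-coordinate that your count misses.

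Third, your proposed presentation of the quotient is the wrong way around. One computes $(a,b,c)^{\tau\s\tau\s}=(a_0a,b,c)=(a,b,c)^{\rho_{1+2^{\ell-1}}}\in(a,b,c)^H$, so $(\bar\tau\bar\s)^2=\bar 1$ and $\l\bar\tau,\bar\s\r\cong\ZZ_2^2$, not $\D_8$; whereas $(a,b,c)^{\tau\eta\tau\eta^{-1}}=(a,bc,c)\notin(a,b,c)^H$ and $(\eta\tau)^4\in H\l(\eta\tau)^4=\eta^4 \bmod H\r$, so the dihedral factor is $\l\bar\eta,\bar\tau\r\cong\D_8$ with $\bar\s$ supplying the central $\ZZ_2$. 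Relatedly, your clean ``kernel'' characterization of $H$ does not work as stated: $\s$ also acts trivially on $G/\l a^2\r\cong\ZZ_2^3$ (since $a^{2^{\ell-1}}\in\l a^2\r$ for $\ell\ge2$), so $H$ is a proper subgroup of that kernel; normality of $H$ has to come from the explicit commutator relations, which is what the paper does. None of these is an issue of taste — each one, if carried out as written, gives a wrong group or a non-closing argument.
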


\begin{proof}
	The general structure of $ \Aut(G) $, where $ G $ is a direct product of groups, is given in \cite{Aut-directproduct}. Here we provide a more refined description of the group $ \Aut(G) $.
	
	Let $ a_0=a^{2^{\ell-1}} $. Then $\Z(\l a,b\r)=\l a_0\r $. Let $ g\in\Aut(G) $. By \cite[Theorem~3.2]{Aut-directproduct}, the images $ a^g,b^g,c^g $ take values as follows:
	\[ a^g=a^\a a^\gamma,\,b^g=b^\a b^\gamma,\,c^g=c^\b c^\delta,\mbox{\ where}\]  
	\begin{itemize}
	    \item[(i)] $ \a\in\Aut(\l a,b\r)=\Aut(\D_{{2^{\ell+1}}}) $, so that, $ a^\a=a^i $, $ b^g=a^jb $, $ 0\le i,j<2^{\ell} $, $ 2\nmid i $;\vs
	    \item[(ii)] $ \gamma\in\Hom(\l a,b\r,\Z(\l c\r))=\Hom(\D_{{2^{\ell+1}}},\ZZ_2) $, so that, $ a^\gamma $, $ b^\gamma=1 $ or $ c $;\vs
	    \item[(iii)] $ \b\in\Aut(\l c\r)=\Aut(\ZZ_2) $ and $\delta\in\Hom(\l c\r,\Z(\l a,b\r))=\Hom(\l c\r,\l a_0\r) $, so that, $ c^\b=c $ and $ c^\delta= 1$ or $a_0 $;
	\end{itemize} 
    that is, \[ a^g\in\{a^i,\,a^ic\},\, b^g\in\{a^jb,\, a^jbc\},\, c^g\in\{c,\,a_0c\}, \] where $ 0\le i,j<2^{\ell} $ and $ 2\nmid i$. Since $ g $ is determined by the images $ a^g ,b^g, c^g$, we have \[|\Aut(G)|=2^3\cdot2^{\ell-1}\cdot2^{\ell}=2^{2\ell+2}.\]
	
	Now, define automorphisms $ \rho_{i}\,(0\le i<2^{\ell}, 2\nmid i),\eta_0,\tau,\s $ as follows:  \[\mbox{$ (a,b,c)^{\rho_{i}}=(a^i,b,c) $,\, $ (a,b,c)^{\eta}=(a,ab,c) $,}\] \[\mbox{$(a,b,c)^\tau=(ac,b,c)$,\, $(a,b,c)^\s=(a,b,a_0c)$.}\]
    Let $ X,Y $ be subgroups of $ \Aut(G) $ with $ X=\l\rho_{5},\rho_{-1}, \eta,\tau,\s\r$, $ Y=\l \rho_5,\rho_{-1},\eta\r$. Then $ Y=\l\eta\r{:}\l\rho_5,\rho_{-1}\r\cong\Aut(\l a,b\r)\cong\Aut(\D_{{2^{\ell+1}}}) $ has a normal subgroup $ H $ of index $ 2 $ as \[H=\l\eta^2\r{:}\l\rho_5,\rho_{-1}\r\cong \ZZ_{2^{\ell-1}}:\Aut(\ZZ_{2^\ell}). \] It is straight to check the following relations: \[ (a,b,c)^{\s\rho_{i}\s\rho_{i}^{-1}}=(a,b,c)^{\s\eta\s\eta^{-1}}=(a,b,c)^{\tau\rho_{i}\tau\rho_{i}^{-1}}=(a,b,c),\] \[\ (a,b,c)^{\tau\eta\tau\eta^{-1}}=(a,bc,c),\ (a,b,c)^{\tau\eta^{2}\tau\eta^{-2}}=(a,b,c),\mbox{\ and\ }\] \[(a,b,c)^{\tau\s\tau\s}=(a_0a,b,c)=(a,b,c)^{\rho_{1+2^{\ell-1}}}\in(a,b,c)^{H}.\] 
	Therefore, $H\lhd Y $ and $ H $ commutes with $ \l\s,\tau\r $, so $ H \lhd X $. Let $ \ov X=X/H $. Then $ \ov X=\l \ov{\eta}, \ov\tau,\ov\s\r\cong \D_8\times\ZZ_2 $, because
	\begin{itemize}
	\item[(a)] $|\ov {\eta}|=|\ov\tau|=|\ov\s|=2$;
	\item[(b)] $ (a,b,c)^{\eta\tau}=(ac,abc,c) $, $ (a,b,c)^{(\eta\tau)^4}=(a,a^4b,c)=(a,b,c)^{\eta^4} $ where $ \eta^4\in H $, so $ |\ov{\eta}\,\ov\tau|=4 $, $ \l \ov\eta,\ov\tau\r\cong\D_8 $;
	\item[(c)] $ \l \ov\eta,\ov\tau\r$ commutes with $ \l\ov\s\r$, as $[\eta,\s]=1$, $ [\tau,\s]\in H $.
	\end{itemize}
	Finally, since $ |X|=2^{2\ell+2}=|\Aut(G)| $, we have $ X=\Aut(G) $.
\end{proof}

The third family is given below. The groups belonging to this family are in fact double covers of $ \D_{{2^{\ell}}}\times\ZZ_2 $. Let $ X,Y $ be two finite groups, and denote respectively by $ \Z(X) $ and $ X' $ the \textit{center} and the \textit{commutator subgroup} of $ X $. We say that $ X $ is a \textit{covering group} of $ Y $ if $ \Z(X)\le X' $ and $ X/\Z(X)\cong Y $. If the center has order $ 2 $, the covering group is often referred to as a \textit{double cover}.

\begin{definition}\label{defi-semidirect}
	{\rm
		Let $ G=\D_{2^{\ell+1}}{:}\ZZ_2=\l a,b\r{:}\l c\r$, $ \ell\ge 3 $, with representation \[G=\l a,b,c\,|\,a^{2^\ell}=b^2=c^2=1,\, a^b=a^{-1},\,a^c=a^{2^{\ell-1}+1},\,b^c=b\r.\] Also, $ G $ can be rewritten as $ G=\l a\r{:}\l b,c\r=\ZZ_{2^{\ell}}{:}\ZZ_2^2 $. Note that, $ b,c,bc $ are exactly the only three involutions of $ \Aut(\l a\r)\cong\ZZ_{2^{\ell-2}}\times \ZZ_2 $, and further,
		\[\l a,b\r\cong\D_{{2^{\ell+1}}},\,\l a,bc\r\cong\SD_{{2^{\ell+1}}},\, \l a,c\r=\ZZ_{2^{\ell}}{:}\ZZ_2 \mbox{\ with\ } a^c=a^{2^{\ell-1}+1}.\]
	}
\end{definition}

\begin{lemma}\label{Klein-II}
	Let $ G=\D_{2^{\ell+1}}{:}\ZZ_2=\l a, b\r{:}\l c\r$, defined in {\rm Definition \ref{defi-semidirect}}. Let $ a_0=a^{2^{\ell-1}} $. Then the following statements hold.
	\begin{itemize}
		\item[\rm (1)] $ \l a^2\r \lhd G $, $ G/\l a^2\r\cong\ZZ_2^3 $, and $G$ is not generated by two elements.\vs
		\item[\rm (2)] The set of involutions $ \Omega=\{a_0,c,a_0c,a^{i}b,a^{2i}bc\,|\,0\le i<2^{\ell}\} $.\vs
		\item[\rm (3)] $ G $ is a double cover of $ \D_{{2^{\ell}}}\times \ZZ_2 $, where $ \Z(G)=\l a_0\r $.
	\end{itemize}
\end{lemma}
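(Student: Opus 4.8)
The plan is to treat the three parts in turn, each reducing to short computations in the given presentation, using from the start that $b^c=b$ forces $bc=cb$, that $|G|=2^{\ell+2}$, and hence that every element of $G$ is uniquely of the form $a^ib^jc^k$ with $0\le i<2^\ell$ and $j,k\in\{0,1\}$.

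For (1) I would first check $\langle a^2\rangle\lhd G$: it is normalized by $b$ since $(a^2)^b=a^{-2}$, and by $c$ since $(a^2)^c=a^{2(2^{\ell-1}+1)}=a^{2^\ell+2}=a^2$. In $\ov G:=G/\langle a^2\rangle$ the image $\ov a$ has order $2$, and $\ov a^{\,\ov b}=\ov a$, $\ov a^{\,\ov c}=\ov a^{\,2^{\ell-1}+1}=\ov a$ (since $2^{\ell-1}$ is even, $a^{2^{\ell-1}}\in\langle a^2\rangle$), $\ov b^{\,\ov c}=\ov b$; thus $\ov G$ is abelian, generated by three involutions, of order $|G|/2^{\ell-1}=2^3$, so $\ov G\cong\ZZ_2^3$. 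If $G=\langle x,y\rangle$ then $\ZZ_2^3$ would be $2$-generated, which is false.

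For (2) I would square a general element $a^ib^jc^k$ in the four cases for $(j,k)$. The case $j=1,k=0$ gives $(a^ib)^2=a^i(a^i)^b=1$ for all $i$. The case $j=1,k=1$ gives, using $a^{bc}=(a^{-1})^c=a^{-(2^{\ell-1}+1)}$ and $(bc)^2=b^2c^2=1$, that $(a^ibc)^2=a^{i-i(2^{\ell-1}+1)}=a^{-i\cdot2^{\ell-1}}=a_0^{\,i}$, an involution exactly when $i$ is even. The case $j=k=0$ gives $a^i$ an involution only for $i\in\{0,2^{\ell-1}\}$, i.e.\ the single involution $a_0$. The case $j=0,k=1$ gives $(a^ic)^2=a^{i(2^{\ell-1}+2)}$; writing $2^{\ell-1}+2=2(2^{\ell-2}+1)$ with $2^{\ell-2}+1$ odd (this is where $\ell\ge3$ enters), this is trivial iff $i\equiv0\pmod{2^{\ell-1}}$, i.e.\ $a^ic\in\{c,a_0c\}$. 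Collecting the four lists yields exactly $\Omega$.

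For (3) I would first compute $\Z(G)$: each of $b,c,bc$ acts nontrivially on $a$, so $C_G(a)=\langle a\rangle$; inside $\langle a\rangle$ the elements commuting with $b$ are $1$ and $a_0$, and $a_0^{\,c}=a_0^{\,2^{\ell-1}+1}=a_0$, so $\Z(G)=\langle a_0\rangle\cong\ZZ_2$. Next, $[a,b]=a^{-2}$ and $[a,c]=a^{2^{\ell-1}}=a_0$ show $\langle a^2\rangle\le G'$, and since $G/\langle a^2\rangle$ is abelian, $G'=\langle a^2\rangle\ni a_0$; hence $\Z(G)\le G'$, so $G$ is a covering group with center of order $2$, a double cover. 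Finally, in $\widetilde G:=G/\langle a_0\rangle$ the image $\widetilde a$ has order $2^{\ell-1}$, $\widetilde c$ is central (as $\widetilde a^{\,\widetilde c}=\widetilde a$, $\widetilde b^{\,\widetilde c}=\widetilde b$), $\langle\widetilde a,\widetilde b\rangle\cong\D_{2^\ell}$ and $\langle\widetilde c\rangle\cong\ZZ_2$, and comparing orders $|\widetilde G|=2^{\ell+1}=|\D_{2^\ell}|\cdot2$ forces $\langle\widetilde a,\widetilde b\rangle\cap\langle\widetilde c\rangle=1$, so $\widetilde G=\langle\widetilde a,\widetilde b\rangle\times\langle\widetilde c\rangle\cong\D_{2^\ell}\times\ZZ_2$. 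The only mildly delicate point should be the involution bookkeeping in (2)---ruling out involutions $a^ic$ or $a^ibc$ with $i$ odd---which is where the hypothesis $\ell\ge3$ is genuinely used (for $\ell=2$, $\SD_8\cong\D_8$ and the statement would differ); the rest is routine manipulation of the defining relations.
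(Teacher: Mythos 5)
Your proposal is correct and follows essentially the same route as the paper: part (1) is verified identically, part (3) computes $\Z(G)=\l a_0\r$, shows $\Z(G)\le G'=\l a^2\r$, and identifies $G/\Z(G)$ exactly as the paper does, and your four-case squaring of $a^ib^jc^k$ in part (2) is the same computation the paper organizes via the decomposition $G=\l a,b\r\cup\l a,bc\r\cup\l a,c\r$ into the three maximal subgroups containing $\l a\r$ (whose involution sets it quotes from Proposition \ref{2-gp-cycle}). No gaps.
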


\begin{proof}
	(1) As $ (a^2)^b=(a^2)^{-1} $, $ (a^2)^c=a^2 $, we have $ \l a^2\r \lhd G $, and so $ \ov G:=G/\l a^2\r=\l \ov a,\ov b,\ov c \r\cong\ZZ_2^3 $. Suppose $ G=\l x,y\r $. Then $ \ov G $ is generated by $ \ov x, \ov y $, which is impossible.
	 
	(2) Let $ L=\l a,b\r $, $ M=\l a,bc\r $, $ N=\l a,c\r $. Note that $ G=L\cup M\cup N $, so letting $ \Omega(X) $ be the set of involutions of $ X\leqslant G $, then $\Omega(G)=\Omega(L)\cup\Omega(M)\cup\Omega(N)$. Now, $ L\cong \D_{{2^{\ell+1}}} $, $ M\cong\SD_{2^{\ell{+1}}} $ and $ N=\l a,c\r=\ZZ_{2^{\ell}}{:}\ZZ_2 $ with $ a^c=a^{2^{\ell-1}+1} $, so $\Omega(L)=\{a_0,a^xb\,|\,0\le x<2^{\ell}\}$, and $ \Omega(M) $, $ \Omega(N) $ are already given in the proof of Proposition \ref{2-gp-cycle} as $\Omega(M)=\{a_0,a^{2x}bc\,|\,0\le x<2^{\ell-1}\}$, $\Omega(N)=\{a_0,c,a_0c\}$.
	
	(3) Let $ L=\l a,b\r\cong \D_{{2^{\ell+1}}} $. It is clear that $ \Z(L)=\l a_0\r\cong\ZZ_2$, $ L'=\l a^2\r\cong\ZZ_{2^{\ell-1}} $. Note that $a ^c, a^{bc}\ne a $, so elements of form $ a^{\lambda}c $ or $ a^{\lambda}bc $ do not commute with $ a $. Thus, $ \Z(G)\le\Z(L) $, and so $ \Z(G)=\Z(L) $ as $ (a_0)^c=(a_0)^{2^{\ell-1}+1}=a_0 $. Then we have $ \Z(G)\le L'\le G' $. Let $ \ov G=G/\Z(G) $. As $ a^c=a^{2^{\ell-1}+1}=a_0a $, $ \ov a^{\ov c}={\ov {a^c}}=\ov a $, and so $\ov G=\l\ov a,\ov b\r\times\l \ov c\r\cong \D_{{2^{\ell}}}\times \ZZ_2 $.
\end{proof}

\begin{lemma}\label{2-gp-aut-II}
	
	Let $ G=\D_{2^{\ell+1}}{:}\ZZ_2=\l a, b\r{:}\l c\r$, defined in {\rm Definition \ref{defi-semidirect}}. Then $ \Aut(G)=\l\rho_{5},\rho_{-1}, \eta_0,\tau,\s\r$ is generated by the following automorphisms: \[\mbox{$ (a,b,c)^{\rho_{5}}=(a^5,b,c) $,\, $ (a,b,c)^{\rho_{-1}}=(a^{-1},b,c) $,\, $ (a,b,c)^{\eta_0}=(a,a^2b,c) $,}\] \[\mbox{$(a,b,c)^\tau=(ac,bc,c)$,\, $(a,b,c)^\s=(a,b,a^{2^{\ell-1}}c)$.}\] Furthermore, letting $ H=\l\rho_{5},\rho_{-1},\eta_0\r$, then $H\cong\Aut(\l a,bc\r)\cong\Aut(\SD_{{2^{\ell+1}}})$, and  $\Aut(G)=(H\times\l\s\r){:}\l\tau\r\cong((\ZZ_{2^{\ell-1}}{:}\Aut(\ZZ_{2^\ell}))\times\ZZ_2){:}\ZZ_2$.
\end{lemma}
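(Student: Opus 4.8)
The plan is to follow the template of Proposition~\ref{2-gp-cycle} and Lemma~\ref{2-gp-aut-I}: determine all possible images $a^g,b^g,c^g$ of a generic $g\in\Aut(G)$, count them to obtain $|\Aut(G)|$, and then exhibit the listed maps as automorphisms and check they generate a subgroup of that order. That $\rho_5,\rho_{-1},\eta_0,\tau,\s$ lie in $\Aut(G)$ is a direct check that the defining relations of $G$ and the orders of $a,b,c$ are preserved and that the images generate $G$; for instance $a^\tau=ac$ has order $2^\ell$ because $(ac)^2=a^{2+2^{\ell-1}}$ has order $2^{\ell-1}$, and $(bc)^2=1$. I would dispose of these verifications first.

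The crucial first step is the image of $a$. Since $a^g$ has order $2^\ell$, I would list all elements of order $2^\ell$ in $G$ using the decomposition $G=L\cup M\cup N$ from the proof of Lemma~\ref{Klein-II}, where $L=\l a,b\r\cong\D_{2^{\ell+1}}$, $M=\l a,bc\r\cong\SD_{2^{\ell+1}}$ and $N=\l a,c\r=\ZZ_{2^\ell}{:}\ZZ_2$, pairwise intersecting in $\l a\r$. As $\ell\ge3$, in each of $L$ and $M$ the subgroup $\l a\r$ is the unique cyclic subgroup of order $2^\ell$; in $N$, the computation $(a^kc)^2=a^{2k+k2^{\ell-1}}$ shows $a^kc$ has order $2^\ell$ exactly when $k$ is odd, and then $\l a^kc\r\supseteq\l a^2\r$, which forces all these subgroups to equal $\l ac\r$. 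Hence $\l a\r$ and $\l ac\r$ are the only two cyclic subgroups of order $2^\ell$ in $G$, so $a^g$ generates one of them and $g$ permutes $\{\l a\r,\l ac\r\}$. Since $\tau$ interchanges these two, every $g\in\Aut(G)$ has the form $g_0$ or $g_0\tau$ with $g_0$ in the subgroup $K$ of automorphisms fixing $\l a\r$; thus $K$ has index $2$ in $\Aut(G)$ and it suffices to describe $K$.

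For $g\in K$ we have $a^g=a^i$ with $i$ odd. The relation $a^b=a^{-1}$ forces $b^g$ to be an involution inverting $a$, and a scan of the involution set $\Omega=\{a_0,c,a_0c,a^{i}b,a^{2i}bc\}$ from Lemma~\ref{Klein-II}(2) shows these are precisely the elements $a^jb$; the relation $a^c=a^{2^{\ell-1}+1}$ forces $c^g$ to be an involution conjugating $a^i$ to $a_0a^i$, which by the same scan leaves only $c^g\in\{c,a_0c\}$; and then $b^c=b$ forces $j$ to be even. Conversely, for any odd $i$, even $j$ and $c^g\in\{c,a_0c\}$, the assignment $a\mapsto a^i$, $b\mapsto a^jb$, $c\mapsto c^g$ respects all the relations and is surjective, hence is an automorphism. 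Therefore $|K|=2^{\ell-1}\cdot2^{\ell-1}\cdot2=2^{2\ell-1}$ and $|\Aut(G)|=2^{2\ell}$.

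It remains to organise the listed generators. The subgroup $H=\l\rho_5,\rho_{-1},\eta_0\r$ is exactly the set of $g\in K$ with $c^g=c$; each such $g$ preserves $M=\l a,bc\r$ (since $a^g\in\l a\r$ and $(bc)^g=a^jbc$), and restriction to $M$ is injective, so $|H|=2^{2\ell-2}$ forces $H\cong\Aut(M)\cong\Aut(\SD_{2^{\ell+1}})\cong\ZZ_{2^{\ell-1}}{:}\Aut(\ZZ_{2^\ell})$ by Proposition~\ref{2-gp-cycle}. One checks that $\s$ has order $2$, centralises $H$ and satisfies $c^\s=a_0c\ne c$, so $H\l\s\r=H\times\l\s\r=K$, of order $2^{2\ell-1}$; since $K$ has index $2$ it is normal in $\Aut(G)$, and $\tau$ has order $2$ and lies outside $K$, so $\l\rho_5,\rho_{-1},\eta_0,\tau,\s\r=(H\times\l\s\r){:}\l\tau\r$, of order $2^{2\ell}=|\Aut(G)|$, equals $\Aut(G)$, with the stated structure $((\ZZ_{2^{\ell-1}}{:}\Aut(\ZZ_{2^\ell}))\times\ZZ_2){:}\ZZ_2$. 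The step requiring the most care is the image of $a$: in contrast with the pure dihedral case, here $N$ contributes a \emph{second} cyclic subgroup $\l ac\r$ of order $2^\ell$, so one must get the element-order computation in $N$ exactly right and not overlook the resulting extra automorphism $\tau$; once $\l a\r$ and $\l ac\r$ are correctly identified, the remainder is bookkeeping parallel to Lemma~\ref{2-gp-aut-I}.
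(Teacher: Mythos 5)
Your proof is correct and arrives at the same count $|\Aut(G)|=2^{2\ell}$ and the same decomposition $(H\times\langle\s\rangle){:}\langle\tau\rangle$, but the key constraining step is genuinely different from the paper's. The paper pins down $c^g$ first: it passes to the central quotient $\ov G=G/\Z(G)\cong\D_{2^{\ell}}\times\ZZ_2$, invokes Lemma~\ref{2-gp-aut-I} to restrict where $\ov c$ can go, lifts back and discards the non-involutory preimages, then determines $b^g$ from the involution list together with $[b^g,c^g]=1$ and the generation condition, and only afterwards $a^g$. You instead pin down $a^g$ first, by showing that $\langle a\rangle$ and $\langle ac\rangle$ are the only cyclic subgroups of order $2^\ell$ (via the decomposition $G=L\cup M\cup N$ and the computation $(a^kc)^2=a^{2k+k2^{\ell-1}}$ in $N$), which immediately splits $\Aut(G)$ as $K\cup K\tau$ with $K$ the index-$2$ stabilizer of $\langle a\rangle$; inside $K$ the images of $b$ and $c$ then fall out of the defining relations by a scan of the involution set. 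What your route buys: no reliance on Lemma~\ref{2-gp-aut-I} or on the lifting argument, and normality of $H\times\langle\s\rangle$ comes for free from the index-$2$ reduction instead of the explicit checks $(a,b,c)^{\tau\rho_i\tau},(a,b,c)^{\tau\eta_0\tau}\in(a,b,c)^H$ that the paper performs; what it costs is the element-order analysis in $N$, which (as you rightly emphasize) is exactly where the extra generator $\tau$ originates. The only step you assert without detail --- that $H$ is precisely the subgroup of $K$ fixing $c$, i.e.\ that $\langle\eta_0\rangle{:}\langle\rho_5,\rho_{-1}\rangle$ realizes every pair $(i,j)$ with $i$ odd and $j$ even --- is routine and is used just as implicitly by the paper.
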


\begin{proof}
	As $ G=\l a,b,c\r $, any given $ g\in\Aut(G) $ is determined by the images $ a^g ,b^g, c^g$. Let $ a_0=a^{2^{\ell-1}} $. According to Lemma \ref{Klein-II}, $ \Z(G)=\l a_0\r $, the set of involutions \[ \Omega=\{a_0,\,c,\,a_0c,\,a^{\lambda}b,\,a^{2\lambda}bc\,|\,0\le \lambda<2^{\ell}\},\] and the factor group $ \ov G=G/\Z(G)=\l\ov a,\ov b\r\times\l\ov c\r\cong\D_{{2^{\ell}}}\times\ZZ_2 $. Now, $ g $ induces an automorphism $ g_0 $ of $ \ov G $, which by Lemma \ref{2-gp-aut-I} maps $ \ov c $ to $ \ov c $ or $ \ov {a^{2^{\ell-2}}c} $. Correspondingly, $ g $ maps $ c $ to one of $ \{c,\, a_0c,\,a^{2^{\ell-2}}c,\,a_0a^{2^{\ell-2}}c\} $. Only the first two elements are involutions. Thus, $ c^g\in\{c,a_0c\} $.
	
	Meanwhile, note that $ b^g,(ab)^g\in\Omega $ are involutions such that $ [b^g,c^g]=1 $ and $ G=\l(ab)^g,b^g,c^g\r $. If $ b^g=a_0c^g $, then $ \ov {b^g}=\ov {c^g} $, so that $ \ov G=\l \ov {a^g},\ov {b^g},\ov {c^g}\r=\l \ov {a^g},\ov {b^g}\r $ is generated by $ 2 $ elements, which is impossible as $ \ov G\cong\D_{{2^{\ell}}}\times\ZZ_2 $. Thus, $b^g\in\{a^\lambda b,\, a^{2\lambda}bc\,|\,0\le i<2^{\ell}\} $, and since $ [a^2,c]=[b,c]=1 $, $ [a,c]=a_0\ne1 $, the possible values such that $ [b^g,c^g]=1 $ are as follows: \[\mbox{$ b^g=a^{2j}b $ or $ a^{2j}bc $, $ 0\le j<2^{\ell-1} $.}\] It follows that the possible values such that $ G=\l(ab)^g,b^g,c^g\r $ are $ (ab)^g=a^kb $, with $ 0\le k<2^{\ell} $, $ k $ odd, and then respectively for $ b^g=a^{2j}b $ or $ a^{2j}bc $,  we have \[\mbox{$ a^g=(ab)^gb^g=a^{i} $ or $a^ic$, where $i=k-2j$, so $ 0\le i<2^{\ell} $, $ 2\nmid i $.}\] 
	
	Note that, $a^g ,b^g, c^g$ can surely take all the values given as above, as for any odd $ i $, the following relations are always satisfied: \[\mbox{$ |a^{i}|=2^{\ell}$, $|a^{i}c|=2^{\ell} $ since $(a^ic)^2=a^i(a^i)^c=a_0a^{2i}$, and}\] \[\mbox{$ (a^i)^{a_0c}=(a^i)^{c}=a_0^ia^i=a_0a^i $, $ (a^ic)^{a_0c}=(a^ic)^{c}=a_0^ia^ic=a_0a^ic $.}\] Therefore, we have $ |\Aut(G)|=2\cdot2\cdot2^{\ell-1}\cdot2^{\ell-1}=2^{2\ell}$.
    
   Now, define automorphisms $ \rho_{i}\,(0\le i<2^{\ell}, 2\nmid i),\eta_0,\tau,\s $ as follows: \[(a,b,c)^{\rho_{i}}=(a^i,b,c),\,(a,b,c)^{\eta_0}=(a,a^2b,c),\]
   \[(a,b,c)^\tau=(ac,bc,c),\, (a,b,c)^\s=(a,b,a_0c).\] Let $ X,H $ be subgroups of $ \Aut(G) $ with $ X=\l\rho_{5},\rho_{-1},\eta_0,\tau,\s\r $, $ H=\l\rho_{5},\rho_{-1},\eta_0\r $. Note that, $ \rho_{i} $ maps $ (a,b,c) $ to $ (a^i,b,c) $, so maps $ (a,bc,c) $ to $ (a^i,bc,c) $; $ \eta_0 $ maps $ (a,b,c) $ to $ (a,a^2b,c) $, so maps $ (a,bc,c) $ to $ (a,a^2bc,c) $. Thus by Proposition \ref{2-gp-cycle}, $H\cong\Aut(\l a,bc\r)\cong\Aut(\SD_{{2^{\ell+1}}})$. 
   
   It is straight to check the following relations: \[ (a,b,c)^{\s\rho_{i}\s\rho_{i}^{-1}}=(a,b,c)^{\s\eta_0\s\eta_0^{-1}}=(a,b,c),\] \[(a,b,c)^{\tau\rho_{i}\tau}=(a_0^{(i-1)/2}a^i,b,c)\in (a,b,c)^H,\ (a,b,c)^{\tau\eta_0\tau}=(a,a_0a^2b,c)\in (a,b,c)^H\mbox{\ and}\]
	\[(a,b,c)^{\tau\s\tau\s}=(a_0a,a_0b,c)\in(a,b,c)^{H}.\] Therefore, $ H $ commutes with $ \l\s\r $, and meanwhile normalized by $ \l\tau\r $. Then $ H $ is normal in $ X $, so is $\l H,\s\r=H\times\l\s\r $. Thus, we have \[ X=(H\times\l\s\r){:}\l\tau\r\cong((\ZZ_{2^{\ell-1}}{:}\Aut(\ZZ_{2^\ell}))\times\ZZ_2){:}\ZZ_2.\]
	
	Finally, since $ |X|=2^{2\ell}=|\Aut(G)| $, we have $ X=\Aut(G) $.
\end{proof}

\noindent\textbf{Remark.} Let $ D=\l a,b\r $ be a dihedral maximal subgroup of $ G $. Then the non-central involutions of $ D $ fall into two conjugacy classes: $ \mathcal{C}_1=\{a^ib\,|\,0\le i<2^{\ell},\,2\,|\,i\} $ and $ \mathcal{C}_2=\{a^ib\,|\,0\le i<2^{\ell},\,2\nmid i\}$. Meanwhile, let $ X=\{c,a_0c\} $. Then $ X $ commutes with $ \mathcal{C}_1 $, but not with $ \mathcal{C}_2 $ since $ [X,\mathcal{C}_2]=a_0\ne 1 $. Note that $ X $ is set-wisely fixed by $ \Aut(G) $; consequently, $ \Aut(G) $ cannot mix $ \mathcal{C}_1 $, $ \mathcal{C}_2$.\vs

The final result of this section indicates that a $ 2 $-group having a cyclic or dihedral maximal subgroup if and only if it is one of the groups we have previously described, with only one exception of small order, i.e., $ G=\ZZ_2^3 $.

\begin{proposition}\label{2-gp-dihedral}
Let $G$ be a finite $2$-group that has a dihedral maximal subgroup.
Assume that $G$ does not have a cyclic maximal subgroup.
Then, either $G=\ZZ_2^3$, or $ G $ is one of the following groups:
\begin{enumerate}[\rm(1)]
    \item $\Q_{2^{\ell+1}}\circ \ZZ_4 $, $ \ell\geqslant 2 $.\vs
    \item $ \D_{2^{\ell+1}}\times\ZZ_2 $, $ \ell\geqslant 2 $.\vs
    \item $\D_{2^{\ell+1}}{:}\ZZ_2=\l a,b\r{:}\l c\r$, where $\l a\r{:}\l b\r=\D_{2^{\ell+1}}$ and $(a,b)^c=(a^{2^{\ell-1}+1},b)$, $ \ell\ge 3 $.
\end{enumerate}
\end{proposition}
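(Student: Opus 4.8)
The plan is to exploit the cyclic subgroup $A=\l a\r$ of the prescribed maximal subgroup $D=\D_{2^{\ell+1}}$, together with the order constraint $|G|=2^{\ell+2}$ (so $D\lhd G$). The first step is a reformulation of the hypothesis: a maximal subgroup of $G$ has order $2^{\ell+1}$ and is cyclic exactly when $G$ has an element of order $2^{\ell+1}$, so ``$G$ has no cyclic maximal subgroup'' is equivalent to $\exp(G)\le 2^\ell$, and since $|a|=2^\ell$ this means $\exp(G)=2^\ell$. If $\ell=1$ this forces $G$ to be elementary abelian of order $8$, i.e.\ $G=\ZZ_2^3$; so assume $\ell\ge 2$. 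Then $A$ is the unique cyclic subgroup of index $2$ in $D$, hence characteristic in $D$, so $A\lhd G$ and $\Z(D)=\l a_0\r\lhd G$ with $a_0=a^{2^{\ell-1}}$; also $C_D(A)=A$. Put $C=C_G(A)$. From $A\le C$, $D\cap C=C_D(A)=A$ and $D\not\le C$ we get $[G:C]\in\{2,4\}$, with $G/C$ embedded in $\Aut(\ZZ_{2^\ell})\cong\ZZ_{2^{\ell-2}}\times\ZZ_2$, and $b$ acting on $A$ by inversion.

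\emph{Case $[G:C]=2$} (which is forced when $\ell=2$, since $\Aut(\ZZ_4)$ is too small to receive $G/C$). Here $C$ is abelian (it centralises its index-$2$ subgroup $A$) of order $2^{\ell+1}$; as $\exp(G)=2^\ell$ rules out $C\cong\ZZ_{2^{\ell+1}}$, we have $C=A\times\l z\r$ for an involution $z\notin A$, and $G=C\l b\r$. The only involutions of $C$ outside $A$ are $z$ and $a_0z$, so $z^b\in\{z,a_0z\}$. If $z^b=z$ then $z$ is central and $G=D\times\l z\r\cong\D_{2^{\ell+1}}\times\ZZ_2$. If $z^b=a_0z$, I would check that $a\mapsto a$, $b\mapsto cd$, $z\mapsto a^{2^{\ell-2}}d$ defines an isomorphism $G\to\Q_{2^{\ell+1}}\circ\ZZ_4=\l a,c\r\circ\l d\r$: by Lemma \ref{Z-circ-Q-hypo-2} these images are involutions generating the central product, they satisfy the defining relations of $G$, and both groups have order $2^{\ell+2}$.

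\emph{Case $[G:C]=4$, i.e.\ $C=A$} (possible only when $\ell\ge 3$). Then $G/A$ embeds in $\Aut(\ZZ_{2^\ell})$ as an order-$4$ subgroup containing the inversion $\rho_{-1}$. Since $\rho_{-1}$ generates the $\ZZ_2$-direct-factor of $\Aut(\ZZ_{2^\ell})$ it is not a square there, so this image cannot be cyclic of order $4$ (whose order-$2$ element is a square); hence $G/A\cong\ZZ_2^2$ and its image is the full $2$-torsion $\l\rho_{-1},\rho_{1+2^{\ell-1}}\r$. Choose $c\in G$ with $a^c=a^{1+2^{\ell-1}}$ and write $c^2=a^m$. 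The condition $\exp(G)=2^\ell$ forces $m$ even (otherwise some $a^ic$ has order $2^{\ell+1}$), and then a direct computation of $(a^kc)^2$ lets me choose $k$ so that, after replacing $c$ by $a^kc$, one has $c^2=1$. Now $c^b=a^jc$ for some $j$; since $(bc)^2=a^j$ commutes with $bc$, while $bc$ acts on $A$ as the involution $\rho_{-1+2^{\ell-1}}$, one gets $j\equiv 0\pmod{2^{\ell-1}}$, i.e.\ $j\in\{0,2^{\ell-1}\}$. For $j=0$ the relations $a^{2^\ell}=b^2=c^2=1$, $a^b=a^{-1}$, $a^c=a^{2^{\ell-1}+1}$, $b^c=b$ are exactly those of Definition \ref{defi-semidirect}, so $G\cong\D_{2^{\ell+1}}{:}\ZZ_2$; and replacing $b$ by the involution $ab$ (which still inverts $a$) changes $j$ into $j+2^{\ell-1}$, so the value $j=2^{\ell-1}$ reduces to $j=0$.

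The index-$4$ case is where I expect the real work to lie: ruling out the cyclic action $G/A\cong\ZZ_4$ and then establishing the rigidity of the $\ZZ_2$-extension --- that $m$ must be even, that $c$ can be normalised to an involution, and that the remaining parameter $j$ is confined to $\{0,2^{\ell-1}\}$ with both values producing the group of Definition \ref{defi-semidirect}. The rest is bookkeeping: translating each normalised presentation into the named groups $\Q_{2^{\ell+1}}\circ\ZZ_4$, $\D_{2^{\ell+1}}\times\ZZ_2$, $\D_{2^{\ell+1}}{:}\ZZ_2$ by exhibiting explicit generators and comparing orders. (As a check, for $\ell=2$ the group of Definition \ref{defi-semidirect} collapses to $\D_8\times\ZZ_2$, which matches both the hypothesis $\ell\ge 3$ in item (3) and the fact that the index-$4$ case does not occur for $\ell=2$.)
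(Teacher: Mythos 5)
Your proposal is correct, and it reaches the classification by a genuinely different route from the paper. The paper fixes $c\in G\setminus D$ and splits on the value of $c^2\in D$: the case $c^2=a^ib$ is killed by a congruence, the case $c^2=a^i\ne 1$ is handled by recognising $\l a,c\r$ as $\Q_{2^{\ell+1}}$ via Proposition \ref{2-gp-cycle}, and the split case $c^2=1$ is broken into four sub-cases according to $\lambda\in\{\pm1,2^{\ell-1}\pm1\}$, each resolved by an explicit change of generators. You instead reformulate the hypothesis as $\exp(G)=2^\ell$ and split on $[G:C_G(\l a\r)]\in\{2,4\}$: the index-$2$ case is settled by the structure of the abelian centralizer $A\times\l z\r$ together with the two possible values of $z^b$, and the index-$4$ case by normalising a presentation (ruling out a cyclic image in $\Aut(\ZZ_{2^\ell})$ via the non-square argument for $\rho_{-1}$, forcing $c^2=1$ after translation, and confining $j$ to $\{0,2^{\ell-1}\}$). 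Your case division is coarser and more structural; in particular it sidesteps the step where the paper identifies $\l a,c\r$ among the groups of Proposition \ref{2-gp-cycle} (a step the paper states rather tersely, since one must also exclude $\ZZ_{2^\ell}\times\ZZ_2$, $\SD_{2^{\ell+1}}$ and the modular group there). The paper's approach, in exchange, produces concrete generator substitutions that are reused in later sections. All the verifications you defer (the isomorphism onto $\Q_{2^{\ell+1}}\circ\ZZ_4$ via $a\mapsto a$, $b\mapsto cd$, $z\mapsto a^{2^{\ell-2}}d$, the evenness of $m$, the solvability of $k(2+2^{\ell-1})+m\equiv 0\pmod{2^\ell}$, and the reduction $j=2^{\ell-1}\rightsquigarrow j=0$ by replacing $b$ with $ab$) are routine and go through.
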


\begin{proof}
First, the $ 2 $-groups appearing in Proposition \ref{2-gp-cycle} (hence having a cyclic maximal subgroup) are all excluded. Particularly for small-order case $|G|\le 2^3$, the only remainder is $G=\ZZ_2^3$, which has a dihedral subgroup $ \ZZ_2^2=\D_4 $ of index $ 2 $.

Assume thus $|G|=2^{\ell+2}\ge 2^4$.
Let $D$ be a dihedral maximal subgroup of $ G $, which is of index $ 2 $, and so $ D\cong\D_{2^{\ell+1}} $. Suppose $ D=\l a,b\r $ with $|a|=2^\ell$, $|b|=2$, $a^b=a^{-1}$.
Note that $\l a\r$, as the only cyclic subgroup of $D$ of order $2^\ell$, is normal in $ G $.

Let $c$ be an element of $G\setminus D$ and let $H=\l a,c\r$. Then $c^2\in D$, $a^c=a^\lambda$ for some integer $\lambda$, and there are three possibilities for $c^2$:
\[\mbox{$c^2=a^i\not=1$, $c^2=a^ib$ or $c^2=1$, where $0\le i<2^{\ell}$.}\]
If $c^2=a^ib$, then $a^{\lambda^2}=a^{c^2}=a^{a^ib}=a^{-1}$, and so $\lambda^2\equiv -1$ $(\mod 2^\ell)$, which is impossible as $ \ell\ge 2 $.
Thus there are two cases to treat, namely, $c^2=a^i$, or $c^2=1$.

(i) Assume first $c^2=a^i\not=1$.
Then $H=\l a,c\r$ is of order $2^{\ell+1}$ and contains a cyclic subgroup $\l a\r$ of index $ 2 $.
By our assumption, $H$ itself can not be cyclic. Thus, by Proposition~\ref{2-gp-cycle}, the only possibility is \[\mbox{$H=\l a,c\r=\Q_{2^{\ell+1}}$,
so that $c^2=a^{2^{\ell-1}}$, $ a^c=a^{-1} $.}\]
Meanwhile, $b^c$ is a non-central involution of $D$, so $b^c=a^jb$, with $0\leqslant j< 2^\ell$.

Suppose $j=2m$. As $ (a^mb)^c=a^{-m}a^{2m}b=a^mb$, letting $b'=a^mbc$, then \[\mbox{$a^{b'}=a^{bc}=a$, $c^{b'}=c^{(a^mb)c}=c$, and $ (b')^2=(a^mb)^2c^2=c^2 $, so}\]
\[\mbox{$ G=\l a,b,c\r=\l a,c,b'\r=\l a,c\r\circ\l b'\r=\Q_{2^{\ell+1}}\circ\ZZ_4 $, as in part~(1).}\] 

Suppose $j=2m+1$. As $ (a^mb)^c=a^{-m}a^{2m+1}b=a^{m+1}b$, letting $b'=a^mbc$, then \[(b')^2=a^mbc^2(a^mb)^c=c^2a^mba^{m+1}b=a^{2^{\ell-1}-1},\] which is of order $ 2^\ell $. Thus, $ \l b'\r\cong\ZZ_{2^{\ell+1}} $ is a cyclic subgroup of $ G $ of index $ 2 $, which contradicts our assumption. In fact, $ G=\l a,b,c\r=\l b',b\r=\l b'\r{:}\l b\r=\SD_{2^{\ell{+2}}} $.\vs

(ii) Assume then $c^2=1$, so that $G=D{:}\l c\r\cong\D_{{2^{\ell+1}}}{:}\ZZ_2$ is a split extension. Thus, either $ G\cong\D_{{2^{\ell+1}}}\times\ZZ_2 $, or $ G\cong\D_{{2^{\ell+1}}}{:}\ZZ_2 $ is determined by the non-conjugate involutions of $ \Aut(\D_{{2^{\ell+1}}}) $. Explicitly, suppose $ a^c=a^\lambda $ and $ b^c=a^jb $, where $ \lambda,j $ are integers within $0\leqslant\lambda,j<2^\ell$ satisfying \[\mbox{$a^{c^2}=a^{\lambda^2}=a$, and $b^{c^2}=(a^jb)^c=a^{j\lambda}a^jb=a^{(\lambda+1)j}b=b$.}\]
Since $|a|=2^\ell$, it yields:

\noindent (a) $\lambda^2\equiv 1$ $(\mod 2^\ell)$, so that, $\lambda\in\{\pm 1\}$ for $ \ell=2 $, and $\lambda\in\{\pm 1,2^{\ell-1}\pm 1\}$ for $ \ell>2 $; 

\noindent (b) $(\lambda+1)j\equiv 0$ $(\mod 2^\ell)$.

\noindent We then analyze all the possibilities.

(ii.1) Assume $\lambda=1$. Then $2j\equiv0$ $(\mod 2^\ell)$. Thus, either $j=0$, $(a,b)^c=(a,b)$, \[\mbox{$G=\l a,b\r{:}\l c\r=\D_{2^{\ell+1}}\times\ZZ_2$, as in part~(2);}\] or $j=2^{\ell-1}$, $(a,b)^c=(a,a^{2^{\ell-1}}b)$. For the latter case, let $b'=abc$, $c'=a^{2^{\ell-2}}c$. As $ a,c $ are commutative, so are $ a,c' $ and $ c,c' $. Meanwhile, $ b,c' $ are commutative as $b^{c'}=b^{a^{2^{\ell-2}}c}=(a^{2^{\ell-1}}b)^{c}=a^{2^{\ell-1}}a^{2^{\ell-1}}b=b$. Then $G=\l a,b,c\r=\l a,b',c'\r$, where $ \l a,b'\r $ commutes with $ \l c'\r $. Further, \[\mbox{$ a^{b'}=a^{abc}=a^{-1} $, and $(b')^2=ab(ab)^c=aba^{1+2^{\ell-1}}b=a^{2^{\ell-1}}=(c')^2$.}\] Thus, $\l a,b'\r\cong \Q_{2^{\ell+1}}$, $\l c'\r\cong\ZZ_4$, and $G\cong \Q_{2^{\ell+1}}\circ\ZZ_4$, as in part~(1).

(ii.2) Assume $\lambda=-1$.
Then $ a^c=a^{-1} $ and $ b^c=a^jb $ for any integer $j$.
For the case $2\nmid j$ , note that $(bc)^2=bcbc=ba^jb=a^{-j}$ is of order $|a|=2^\ell$, and so $\l bc\r=\ZZ_{2^{\ell+1}}$ is a cyclic subgroup of $ G $ of index $ 2 $, which contradicts our assumption. In fact, $ G=\l a,b,c\r=\l b,c\r=\l bc\r{:}\l c\r=\D_{2^{\ell{+2}}} $. For the case $2\nmid j$, let $c'=a^{j\over2}bc$. Then \[\mbox{$ (c')^2=a^{j\over2}b(a^{j\over2}b)^c=a^{j\over2}ba^{-{j\over2}}a^jb=1 $, and}\] \[\mbox{$ a^{c'}=a^{bc}=a $,\, $ b^{c'}=(a^{-j}b)^{bc}=a^{-j}a^jb=b $.}\] Thus, $G=\l a,b,c\r=\l a,b,c'\r=\l a,b\r\times\l c'\r=\D_{2^{\ell+1}}\times\ZZ_2$, as in part~(2).

(ii.3) Assume $\lambda=2^{\ell-1}+1$.
Then $(2^{\ell-1}+2)j\equiv0$ $(\mod 2^\ell)$. Thus, either $j=0$, \[\mbox{$G=\l a,b\r{:}\l c\r $, with $(a,b)^c=(a^{2^{\ell-1}+1},b)$, as in part~(3);}\] or $j=2^{\ell-1}$, $b^c=a^{2^{\ell-1}}b$. For the latter case, Let $b'=ab$. Then \[(b')^c=(ab)^c=a^{2^{\ell-1}+1}a^{2^{\ell-1}}b=ab=b',\] and so $G=\l a,b,c\r=\l a,b',c\r=\l a,b'\r{:}\l c\r$, with $ (a,b')^c=(a^{2^{\ell-1}+1},b') $, as in part~(3).

(ii.4) Assume $\lambda=2^{\ell-1}-1$.
Then $2^{\ell-1}j\equiv0$ $(\mod 2^\ell)$, so $j$ is even. If $j\equiv0$ $(\mod 4)$, then $ {j\over 2} $ is even, so letting $c'=a^{j\over2}bc$, we have \[ \mbox{$(c')^2=a^{j\over2}b(a^{j\over2}b)^c=(a^{j\over2}b)(a^{{j\over2}(2^{\ell-1}-1)}a^jb)=(a^{j\over2}b)^2=1$, and}\]
\[\mbox{$ a^{c'}=a^{bc}=(a^{-1})^c=a^{2^{\ell-1}+1} $,\, $ b^{c'}=(a^{-j}b)^{bc}=(a^jb)^c=a^{-j}a^jb=b $.}\]
So $G=\l a,b,c\r=\l a,b,c'\r=\l a,b\r{:}\l c'\r$, as in part~(3). Suppose then $j\equiv2$ $(\mod 4)$. Let $b'=ab$, $c'=a^{x}bc$, where $x={j\over2}-2^{\ell-2}$. Then \[\mbox{$(c')^2=a^{x}b(a^{x}b)^c=(a^{x}b)(a^{x(2^{\ell-1}-1)}a^jb)=(a^{x}b)(a^{2^{\ell-1}+{j \over 2}+2^{\ell-2}}b)=(a^{x}b)^2=1$, and}\] \[\mbox{$ a^{c'}=a^{bc}=(a^{-1})^c=a^{2^{\ell-1}+1} $,\, $ b^{c'}=(a^{-2x}b)^{bc}=(a^{2x}b)^{c}=a^{-2x}a^jb=a^{2^{\ell-1}}b$.}\] So, $ (b')^{c'}=a^{c'}b^{c'}=ab=b' $, $G=\l a,b,c\r=\l a,b',c'\r=\l a,b'\r{:}\l c'\r $, as in part~(3).
\end{proof}

\section{Arc-transitive maps}

Let $ G $ be a $ 2 $-group that has a cyclic or dihedral maximal subgroup. In this section, we classify the maps admitting $ G $ as an arc-transitive automorphism group.

Let $ \calM=(V,E,F)$ be a map and $ G\le\Aut\calM $ be a $ 2 $-group acting on $ \calM $. Recall \cite[Lemma~2.2]{HLZZ} (see page 1), by which if $ 4\nmid\chi(\calM) $, then each Sylow $ 2 $-subgroup of $ \Aut\calM $ has a cyclic or dihedral subgroup of index $ 2 $. Note that, $ G $ is contained in some Sylow $ 2 $-subgroup of $ \Aut\calM $. Thus, by \cite[Lemma\,2.5]{HLZZ} -- {\it if a finite group $ G $ satisfies {\rm Hypothesis~$\ref{hypo-0}$}, then so does each subgroup of $ G $} -- we immediately have

\begin{lemma}\label{arc-map}
	Let $ \calM $ be a map and $ G\le\Aut\calM $ be a $ 2 $-group acting on $ \calM $. If $4\nmid\chi(\calM)$, then $ G $ has a cyclic or dihedral subgroup of index $2$, so that, it is one of the groups appearing in {\rm Theorem \ref{thm:p-gps}}.
\end{lemma}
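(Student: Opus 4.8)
The plan is to obtain this as an immediate consequence of the two quoted results of \cite{HLZZ} together with Theorem~\ref{thm:p-gps}. We may assume $G\ne 1$, the trivial case being vacuous, so $2$ divides $|\Aut\calM|$ and \cite[Lemma~2.2]{HLZZ} applies with $p=2$: from $4\nmid\chi(\calM)$ it follows that every Sylow $2$-subgroup of $\Aut\calM$ has a cyclic or dihedral subgroup of index $2$, equivalently a cyclic or dihedral maximal subgroup since index $2$ means maximal in a $2$-group. As $G$ is a $2$-subgroup of $\Aut\calM$, Sylow's theorem gives a Sylow $2$-subgroup $P$ of $\Aut\calM$ with $G\le P$.

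Next I would note that $P$ satisfies Hypothesis~\ref{hypo-0}: being a $2$-group, its only Sylow subgroup is $P$ itself, and we have just seen that $P$ has a cyclic or dihedral maximal subgroup. By \cite[Lemma~2.5]{HLZZ}, every subgroup of a group satisfying Hypothesis~\ref{hypo-0} again satisfies it, so $G$ satisfies Hypothesis~\ref{hypo-0}; unwinding this for the $2$-group $G$ shows that $G$ has a cyclic or dihedral maximal subgroup.

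Finally, Theorem~\ref{thm:p-gps} classifies precisely the finite $2$-groups having a cyclic or dihedral maximal subgroup, so $G$ must be one of the groups listed there. This settles the lemma; the small-order groups are covered by Table~\ref{tab-small} together with the list preceding Proposition~\ref{2-gp-cycle}, so no separate treatment is needed.

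I do not anticipate a real obstacle: the statement is essentially a repackaging of \cite[Lemmas~2.2 and 2.5]{HLZZ} in the presence of Theorem~\ref{thm:p-gps}. The only point demanding a little care is the step from ``Sylow $2$-subgroups of $\Aut\calM$ have a cyclic or dihedral maximal subgroup'' to ``$G$ itself does'': one cannot pass to an arbitrary subgroup of a $2$-group with this property without the subgroup-closure result \cite[Lemma~2.5]{HLZZ}, so the embedding $G\le P$ into a Sylow $2$-subgroup is the crux of the (short) argument.
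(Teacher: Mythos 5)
Your proposal is correct and follows exactly the paper's argument: the paper likewise applies \cite[Lemma~2.2]{HLZZ} with $p=2$ to the Sylow $2$-subgroups of $\Aut\calM$, embeds $G$ in one of them, invokes the subgroup-closure result \cite[Lemma~2.5]{HLZZ}, and concludes via Theorem~\ref{thm:p-gps}. No substantive difference.
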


Suppose further that $ \calM $ is \textit{$ G $-arc-transitive}, namely, $ G $ acting transitively on the set of arcs of $ \calM $. Then according to \cite{GW,ST}, the map $ \calM $ lies in one of the five families displayed in the following table, where $ (v,e,f) $ and $ (v',e,f') $ are two {\it flags} (incident triples) with $ v,v'\in V$, $e\in E$, $f,f'\in F $. In other word, one of the following holds:
\[\begin{array}{|c|c|c|c|c|c|}\hline
	\mbox{\footnotesize Type}&G& G_v,\ G_{v'} &G_f,\ G_{f'} &G_e & \mbox{{\footnotesize{remark}}} \\ \hline
	1 & \l x,y,z\r & \l x,y\r & \l y,z\r&\l x,z\r &\mbox{{\footnotesize{regular}}} \\
	2^* & \l x,y,z\r& \l x,y\r &\l x,z\r, \l y,z\r&\l z\r & \mbox{{\footnotesize{vertex-reversing}}}\\
	2^P & \l x,y,z\r&\l x,y\r & \l x,y^z\r &\l z\r & \mbox{{\footnotesize{vertex-reversing}}} \\
	2^*{\rm ex} & \l \a,z\r &\l \a\r &\l z,z^\a\r &\l z\r & \mbox{{\footnotesize{vertex-rotary}}} \\
	2^P{\rm ex} & \l \a,z\r& \l \a\r &\l \a z\r &\l z\r & \mbox{{\footnotesize{vertex-rotary}}} \\ \hline
\end{array}\] 

(1) $ \calM $ is $ G $-vertex-reversing (type $ 2^\ast $ or $ 2^P $). In this case, $\calM=\calM(G,x,y,z)$ is determined by $ G $, and a \textit{reversing triple} $(x,y,z)$ for $ G $ such that $G=\l x,y,z\r$, $ |x|=|y|=|z|=2 $.

(2) $ \calM $ is $ G $-regular (type 1). In this case, $\calM=\calM(G,x,y,z)$ is determined by $ G $, and a \textit{regular triple} $ (x,y,z) $ for $ G $ such that $ (x,y,z) $ is a reversing triple, $ \l x,z\r\cong\ZZ_2^2 $.

(3) $ \calM $ is $ G $-vertex-rotary (type $ 2^*{\rm ex} $ or $ 2^P{\rm ex} $). In this case, $ \calM=\calM(G,\a,z) $ is determined by $ G $, and a \textit{rotary pair} $ (\a,z) $ for $ G $ such that $G=\l \a,z\r$ and $|z|=2$.\vs

Let $ \Sigma $ be the set of reversing triples for $ G $. Then $ \Aut(G) $ naturally acts on $ \Sigma $ by mapping each $ (x,y,z) $ to $ (x^g,y^g,z^g) $, for any $ g\in\Aut(G) $. Since $ x,y,z $ generate $ G $, this action is \textit{semiregular} with the stabilizer of each triple being the identity. Consequently, $ \Sigma $ is the union of some $ \Aut(G) $-orbits of length $ |\Aut(G)| $. If $ \Sigma $ is taken to be the set of regular triples or rotary pairs, the case is similar. Thus, we have

\begin{lemma}\label{semi}
	Let $ \Sigma $ be the set of reversing triples/regular triples/rotary pairs for $ G $. Then $ \Sigma $ is the union of some $ \Aut(G) $-orbits of length $ |\Aut(G)| $.
\end{lemma}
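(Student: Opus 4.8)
The plan is to exhibit the natural action of $\Aut(G)$ on $\Sigma$ and to show that this action is semiregular (free); the assertion then follows immediately from the orbit--stabilizer theorem. So the argument splits into three steps: (a) the action is well defined, i.e.\ $\Sigma$ is $\Aut(G)$-invariant; (b) the action is free; (c) conclude.

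For step (a), define the action componentwise: for $g\in\Aut(G)$ send a reversing triple $(x,y,z)$ to $(x^g,y^g,z^g)$, and a rotary pair $(\a,z)$ to $(\a^g,z^g)$. I would then check that the image again lies in $\Sigma$. An automorphism preserves the order of each element, so $|x^g|=|y^g|=|z^g|=2$, and it carries a generating set to a generating set, so $\l x^g,y^g,z^g\r=\l x,y,z\r^g=G$; hence a reversing triple maps to a reversing triple. For a regular triple one needs in addition $\l x^g,z^g\r=\l x,z\r^g\cong\ZZ_2^2$, which holds because automorphisms preserve the isomorphism type of a subgroup. For a rotary pair one needs $\l\a^g,z^g\r=G$ and $|z^g|=2$, both immediate. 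Thus $\Aut(G)$ permutes $\Sigma$ in each of the three cases.

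For step (b), suppose $g\in\Aut(G)$ stabilizes some $(x,y,z)\in\Sigma$ (respectively $(\a,z)$). Then $g$ fixes $x$, $y$, $z$ (respectively $\a$, $z$), i.e.\ $g$ fixes a generating set of $G$ elementwise; since an automorphism is determined by its values on a generating set, $g=1$. Hence the stabilizer in $\Aut(G)$ of every element of $\Sigma$ is trivial. For step (c), by orbit--stabilizer each $\Aut(G)$-orbit on $\Sigma$ has length $|\Aut(G)|$, and $\Sigma$ is a disjoint union of its $\Aut(G)$-orbits, which is precisely the claim.

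The only step that requires any attention is the well-definedness in (a): one must confirm that each auxiliary condition distinguishing reversing triples, regular triples and rotary pairs from arbitrary triples or pairs is automorphism-invariant. But every such condition is an intrinsic group-theoretic property --- orders of elements, the generating property, and the isomorphism type of a subgroup --- so I do not expect a genuine obstacle; the argument is essentially the semiregularity observation already noted in the paragraph preceding the statement.
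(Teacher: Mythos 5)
Your proof is correct and follows exactly the paper's argument: the natural componentwise action of $\Aut(G)$ on $\Sigma$ is semiregular because an automorphism fixing a generating set elementwise is trivial, and the orbit--stabilizer theorem gives the conclusion. The extra care you take to verify that the defining conditions of each kind of triple or pair are automorphism-invariant is a welcome (if routine) addition to what the paper states.
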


Moreover, we say that two reversing triples/regular triples/rotary pairs are \textit{equivalent}, if they fall into the same $ \Aut(G) $-orbit. It is easy to see that the maps determined by equivalent triples/pairs are mutually isomorphic. Hence, up to isomorphism, we need only consider the representatives for the $ \Aut(G) $-orbits.

\subsection{Reversing triples \& vertex-reversing maps} 
In this part, let $ G $ be a $ 2 $-group that has a cyclic or dihedral maximal subgroup, and denote respectively by $ \Omega(G) $ and $\calT(G)$ the set of involutions and reversing triples for $ G $. Suppose $ \calT(G)\ne\emptyset $ and let $ (x,y,z)\in\calT(G) $. Note that the involutions $ x,y,z $ are not necessarily distinct.

We thus need only consider those groups from Theorem \ref{thm:p-gps} that can be generated by involutions. In particular, if $ G $ has a cyclic maximal subgroup, then by Proposition \ref{2-gp-cycle}, $G=\ZZ_2^2 $ or $ \D_{2^{\ell+1}} $, $ \ell\ge 2 $. Therefore, $ G $ is one of the following groups:\[\ZZ_2^2,\,\D_{2^{\ell+1}},\,\ZZ_2^3,\,\Q_{2^{\ell+1}}\circ\ZZ_4,\,\D_{2^{\ell+1}}\times\ZZ_2,\,\D_{2^{\ell+1}}{:}\ZZ_2.\]

The next lemma deals with the abelian case, which is easy to prove.

\begin{lemma}\label{revtri1}
	Let $ G=\ZZ_2^2 $ or $\ZZ_2^3$. Then, 
	\begin{itemize}
		\item[(1)] if $ G=\ZZ_2^2=\l a,b\r $, then $ \Delta=\{(a,ab,b),\,(a,b,b),\, (b,a,b),\,(b,b,a)\}$ is a set of representatives for the $ \Aut(G) $-orbits on $ \calT(G) ${\rm;}\vs
		\item[(2)] if $ G=\ZZ_2^3=\l a,b,c\r$, then any reversing triple is equivalent to $(a,b,c)$.
	\end{itemize}
\end{lemma}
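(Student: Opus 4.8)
The plan is to treat the two abelian groups directly by enumerating involutions and analyzing the $\Aut(G)$-action. Recall from Lemma \ref{semi} that every $\Aut(G)$-orbit on $\calT(G)$ has length exactly $|\Aut(G)|$, so it suffices to compute $|\calT(G)|$ and exhibit enough representatives to account for all of it. First I would record that for $G=\ZZ_2^2$ every non-identity element is an involution, so a triple $(x,y,z)$ of involutions generates $G$ precisely when not all three coincide and they are not forced into a cyclic subgroup; concretely, $\calT(G)$ consists of all triples from $\{a,b,ab\}$ that generate $G$. A short count: the total number of triples is $3^3=27$, and the triples failing to generate are exactly the $3$ constant ones (each lying in a $\ZZ_2$), giving $|\calT(G)|=24=4|\Aut(G)|$ since $|\Aut(G)|=|\S_3|=6$. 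Hence there are exactly four orbits, and I would verify that the four listed triples $(a,ab,b)$, $(a,b,b)$, $(b,a,b)$, $(b,b,a)$ lie in distinct orbits by noting that the orbit of a triple is determined by its ``coincidence pattern'' together with, in the all-distinct case, the cyclic order — $\Aut(G)=\S_3$ permutes $\{a,b,ab\}$ arbitrarily, so the all-distinct triples form a single orbit (represented by $(a,ab,b)$), while the three patterns with exactly two equal entries (positions $\{1,2\}$, $\{1,3\}$, $\{2,3\}$) are preserved and each contributes one orbit, represented by $(b,b,a)$, $(b,a,b)$, $(a,b,b)$ respectively.

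For $G=\ZZ_2^3=\l a,b,c\r$, the argument is even shorter: a reversing triple must generate $G$, and three involutions generate $\ZZ_2^3$ if and only if they are linearly independent over $\FF_2$, i.e. form a basis. Since $\Aut(G)=\GL_3(2)$ acts transitively (indeed simply transitively in the appropriate sense) on ordered bases, any two reversing triples are $\Aut(G)$-equivalent, so there is a single orbit with representative $(a,b,c)$. Equivalently, one can count: the number of ordered bases of $\FF_2^3$ is $7\cdot 6\cdot 4=168=|\GL_3(2)|$, matching Lemma \ref{semi} with a single orbit.

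The only mild subtlety — and the ``main obstacle,'' such as it is — is the bookkeeping in part (1): one must be careful that a triple like $(a,a,b)$ does fail to generate $\ZZ_2^2$ (it lies in $\l a,b\r$? no — $\l a,a,b\r=\l a,b\r=G$, so it does generate), which means the non-generating triples are \emph{only} the three constant ones, not more; I would double-check this by directly listing, for each of the $27$ triples, whether the entries span $\FF_2^2$. After that, matching the count $24 = 4 \cdot 6$ against Lemma \ref{semi} forces exactly four orbits, and exhibiting four pairwise-inequivalent representatives (distinguished by their coincidence patterns as above) completes the proof. No deep input is needed beyond $\Aut(\ZZ_2^2)\cong\S_3$, $\Aut(\ZZ_2^3)\cong\GL_3(2)$, and the semiregularity statement of Lemma \ref{semi}.
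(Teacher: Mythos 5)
Your proof is correct; the paper itself omits any argument here, stating only that the abelian case ``is easy to prove,'' so your write-up simply supplies the missing details. Your method --- counting $|\calT(G)|$, dividing by $|\Aut(G)|$ via the semiregularity of Lemma \ref{semi}, and separating orbits by coincidence patterns (resp.\ noting that $\Aut(\ZZ_2^3)\cong\GL_3(2)$ is simply transitive on ordered bases of $\FF_2^3$) --- is exactly the technique the paper deploys in the non-abelian cases (Lemmas \ref{revtri2}--\ref{revtri5}), so it is entirely in the spirit of the paper.
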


\begin{lemma}\label{revtri2}
	Let $G=\D_{2^{\ell+1}}=\l a\r{:}\l b\r$, $ \ell\ge 2 $. Let $ a_0=a^{2^{\ell-1}} $. Then the following set $ \Delta $ is a set of representatives for the $ \Aut(G) $-orbits on $ \calT(G) $, where  \[\Delta=\{(b,ab,w),\,(w,b,ab),\,(ab,w,b)\,|\,w=a_0 \mbox{\ or\ } a^{2x}b,\, 0\le x<2^{\ell-1}\}.\]
\end{lemma}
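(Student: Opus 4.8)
The plan is to use the semiregularity of the $\Aut(G)$-action on $\calT(G)$ (Lemma~\ref{semi}) together with the explicit model $\Aut(G)=\Hol(\ZZ_{2^\ell})$ from Proposition~\ref{2-gp-cycle}. As a preliminary, I would record the following facts about $G=\D_{2^{\ell+1}}$: its involutions are the unique central one $a_0=a^{2^{\ell-1}}$ and the $2^\ell$ reflections $a^ib$ ($0\le x<2^\ell$); and the automorphism $g_{m,n}\colon(a,b)\mapsto(a^m,a^nb)$, $m$ odd, fixes $a_0$ and sends $a^ib\mapsto a^{mi+n}b$. Thus $\Aut(G)$ fixes $a_0$ and acts on the reflection indices by all affine maps $i\mapsto mi+n$ with $m$ odd; since $mi+n\equiv i+n\pmod{2}$, such an automorphism either preserves all reflection-index parities (when $n$ is even) or reverses them all (when $n$ is odd).

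For the existence half, take $(x,y,z)\in\calT(G)$. At least two of $x,y,z$ must be reflections, for $\l a_0,a^ib\r$ has order $4$ and $\l a_0\r$ has order $2$, both smaller than $|G|\ge8$. \emph{Case A, all three entries are reflections:} write $x=a^{i_1}b$, $y=a^{i_2}b$, $z=a^{i_3}b$. Since $xy=a^{i_1-i_2}$, $yz=a^{i_2-i_3}$ and $x=a^{i_1}b$, one has $\l x,y,z\r=\l a^{d},a^{i_1}b\r$ with $d=\gcd(i_1-i_2,i_2-i_3,2^\ell)$, so the triple generates $G$ exactly when $d=1$, i.e.\ exactly when $i_1,i_2,i_3$ are not all of the same parity. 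Then precisely one index, in some position $p\in\{1,2,3\}$, has parity opposite to the other two, and by the preliminary remark $p$ is an $\Aut(G)$-invariant of the triple. Using a translation followed by a scaling by an odd unit (both realized by suitable $g_{m,n}$'s), one sends the minority reflection to $ab$, the majority reflection immediately preceding it in the cyclic order $1\to2\to3\to1$ to $b$, and the remaining majority reflection to $a^{2x}b$, where $x$ is the uniquely determined integer with $0\le x<2^{\ell-1}$ (its index has even parity, like that of $b$). This produces the representative $(b,ab,a^{2x}b)$ if $p=2$, $(a^{2x}b,b,ab)$ if $p=3$, and $(ab,a^{2x}b,b)$ if $p=1$. \emph{Case B, exactly one entry equals $a_0$}, in a position $p$ which, as before, is invariant: the other two reflections $a^ib,a^jb$ satisfy $\l a^ib,a^jb,a_0\r=G$ only when $i-j$ is odd, and $\l x,y,z\r=G$ forces this; normalizing the two reflection indices to $0$ and $1$ (in the order dictated by the target shape) then sends the triple to $(b,ab,a_0)$, $(a_0,b,ab)$ or $(ab,a_0,b)$ according as $p=3$, $1$ or $2$. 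Hence every element of $\calT(G)$ is $\Aut(G)$-equivalent to a member of $\Delta$.

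For the uniqueness half, the invariants above separate the members of $\Delta$ pairwise: whether some entry equals $a_0$, and in which position, distinguishes all the $w=a_0$ triples from the $w=a^{2x}b$ triples and separates the three shapes among the former; the position of the odd-parity reflection (namely $2$, $3$, $1$ in $(b,ab,w)$, $(w,b,ab)$, $(ab,w,b)$) separates the three shapes among the latter; and two members of one shape with $x\ne x'$ cannot be equivalent, since an automorphism taking one to the other would fix both $b$ and $ab$, hence fix $G=\l b,ab\r$ pointwise and be the identity. Together with the semiregularity of the action (Lemma~\ref{semi}), this shows that $\Delta$ meets every $\Aut(G)$-orbit on $\calT(G)$ exactly once, as claimed.

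I expect the only real difficulty to be bookkeeping: pinning down which of the three cyclic shapes $(b,ab,w)$, $(w,b,ab)$, $(ab,w,b)$ is the correct normal form for each value of the position-invariant $p$, and checking that the translate-then-scale step lands inside $\Delta$ rather than at some other representative of the same orbit. With the affine action on the reflections in place, every individual verification — the generation criteria, the invariance of $p$, and the existence of the normalizing automorphism — is a one-line computation.
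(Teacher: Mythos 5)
Your proof is correct, and it takes a genuinely different route from the paper's. The paper argues by counting: in each case it enumerates the relevant triples, divides by $|\Aut(G)|=2^{2\ell-1}$ using the semiregularity of the action (Lemma~\ref{semi}) to obtain the exact number of orbits, and then exhibits that many pairwise inequivalent triples, so completeness of the list is forced by the count. You instead work constructively with the affine model $a^ib\mapsto a^{mi+n}b$ of $\Aut(G)=\Hol(\ZZ_{2^\ell})$: you show directly that every reversing triple can be carried onto a member of $\Delta$ by an explicit translate-then-scale automorphism, and separately that the members of $\Delta$ are pairwise inequivalent via the invariants ``position of $a_0$'' and ``position of the minority-parity reflection'' (the latter being well defined because $i\mapsto mi+n$ preserves or reverses all parities simultaneously), plus the observation that an automorphism fixing $b$ and $ab$ is the identity. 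A pleasant side effect is that your argument does not actually need Lemma~\ref{semi} at all --- the normalization plus pairwise inequivalence already shows $\Delta$ meets each orbit exactly once --- whereas the paper's argument leans on it essentially. The trade-off is that you must verify the normalizing automorphism exists and lands on the right cyclic shape (the bookkeeping you flag), which the counting argument sidesteps; conversely, your invariants make the inequivalence of the three shapes more transparent than the paper's implicit appeal to the orbit count. All the individual verifications you defer (the $\gcd$ generation criterion, invariance of the minority position, solvability of $mi_1+n\equiv 0$, $mi_2+n\equiv 1$ for $i_1-i_2$ odd) do check out.
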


\begin{proof}
	Let $ (x,y,z)\in\calT(G) $. The set of involutions $ \Omega(G)=\{a_0,a^\lambda b\,|\,0\le \lambda<2^{\ell}\} $. By Proposition \ref{2-gp-cycle}, $ |\Aut(G)|=2^{2\ell-1} $. Note that, $ a_0 $ as the only central involution is fixed by $\Aut(G) $. We thus have two non-equivalent cases as follows:
	
	(i) Suppose $ a_0\in\{x,y,z\} $.  Since $ G=\l x,y,z\r $, the other two involutions must be $ a^ib $ and $ a^jb $ for some $ 0\le i,j<2^{\ell} $ with $ i-j $ odd. Thus, there are precisely $ 3\cdot2^{\ell}\cdot2^{\ell-1}=3\cdot 2^{2\ell-1} $ reversing triples in this case, which by Lemma \ref{semi} fall into $ 3\cdot 2^{2\ell-1}/|\Aut(G)|=3 $ $ \Aut(G) $-orbits, with a set of representatives $ X=\{(b,ab,a_0),\,(a_0,b,ab),\, (ab,a_0,b)\} $.
	
	(ii) Suppose $ a_0\notin\{x,y,z\} $. Then $ (x,y,z)=(a^ib,a^jb,a^kb) $ for some $ 0\le i,j,k<2^{\ell} $. Note that $ (i-j) + (j-k) + (k-i)=0 $. If $ i-j $, $ j-k $, $ k-i $ are all even, then $ i,j,k $ are all even or all odd, so $ \l x,y,z\r$ is contained in the proper subgroup $\l a^2,b\r $ or $ \l a^2,ab\r $ of $ G $, a contradiction. Thus, we have three non-equivalent sub-cases: 
	\begin{itemize}
		\item[(ii.1)] $ i-j $, $ j-k $ are odd, so $ k-i $ is even, $ G=\l x,y\r=\l y,z\r $, $ G\ne\l z,x\r $;
		\item[(ii.2)] $ j-k $, $ k-i $ are odd, so $ i-j $ is even, $ G=\l y,z\r=\l z,x\r $, $ G\ne\l x,y\r $;
		\item[(ii.3)] $ k-i $, $ i-j $ are odd, so $ j-k $ is even, $ G=\l z,x\r=\l x,y\r $, $ G\ne\l y,z\r $.
		\end{itemize}
    For each of the cases, we have $ 2^{\ell}\cdot2^{\ell-1}\cdot2^{\ell-1}=2^{3\ell-2} $ such triples, which by Lemma \ref{semi} fall into $ 2^{3\ell-2}/|\Aut(G)|= 2^{\ell-1} $ $ \Aut(G) $-orbits. Now let \[\mbox{$ Y_1=\{(b,ab,a^{2x}b)\} $, $ Y_2=\{(a^{2x}b,b,ab)\} $, $ Y_3=\{(ab,a^{2x}b,b)\} $, $ 0\le x<2^{\ell-1} $.}\] Note that any $ g\in\Aut(G) $ which fixes $ b $ and also $ ab $ must fix all elements in $ G $. Thus, $Y_1, Y_2, Y_3$ are respectively a set of representatives for the $ 2^{\ell-1} $ orbits of the three cases.
    
    At last, $ \Delta=X\cup Y_1\cup Y_2\cup Y_3$ is what we need.
\end{proof}

\begin{lemma}\label{revtri3}
	Let $G=\Q_{2^{\ell+1}}\circ\ZZ_4=\l a,c\r\circ \l d\r$, defined in {\rm Definition \ref{defi-Q}}. Then 
	\begin{itemize}
		\item[(1)] if $ \ell=2 $, then any reversing triple is equivalent to $ (ad,acd,cd) $;\vs
		\item[(2)] if $ \ell>2 $, then the set $ \Delta= \{(a^{2^{\ell-2}}d,acd,cd),\, (cd,a^{2^{\ell-2}}d,acd),\,(acd, cd,a^{2^{\ell-2}}d)\} $ is a set of representatives for the $ \Aut(G) $-orbits on $ \calT(G) $.
	\end{itemize}
\end{lemma}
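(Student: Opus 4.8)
The plan is to avoid a direct orbit-by-orbit normalization of an arbitrary reversing triple, and instead combine a short counting argument with one $\Aut(G)$-invariant that separates the three candidate triples.

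First I would record the ambient data. By Lemma~\ref{ZQ-aut}, $\Aut(G)=\Aut(\langle a,c\rangle)\times\langle\tau\rangle$, so every automorphism preserves $\langle a,c\rangle$ setwise and sends $d$ to $d$ or $d^{-1}$. For $\ell\ge 3$ this gives, via Proposition~\ref{2-gp-cycle}, $|\Aut(G)|=|\Hol(\ZZ_{2^\ell})|\cdot 2=2^{2\ell}$, whereas for $\ell=2$ we have $\Aut(\Q_8)\cong\S_4$ and hence $|\Aut(G)|=48$. Next, by Lemma~\ref{Z-circ-Q-hypo-2}(3)--(4), every reversing triple is an ordering of a set $\{a^{2^{\ell-2}}d^{\varepsilon},\,a^icd,\,a^jcd\}$ with $\varepsilon\in\{1,-1\}$ and $i-j$ odd, and its three entries are pairwise non-commutative, hence distinct. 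Counting the underlying sets (two values of $\varepsilon$, and $2^{2\ell-2}$ unordered opposite-parity pairs $\{i,j\}$ in $\ZZ_{2^\ell}$) and multiplying by the $6$ orderings yields $|\calT(G)|\le 6\cdot 2^{2\ell-1}=3\cdot 2^{2\ell}$. On the other hand $\{a^{2^{\ell-2}}d,\,acd,\,cd\}$ does generate $G$: from $(acd)(cd)=ac^2d^2=a$ one gets $\langle acd,cd\rangle\supseteq\langle a,cd\rangle\cong\D_{2^{\ell+1}}$, and then $(a^{2^{\ell-2}}d)\,a^{-2^{\ell-2}}=d$ recovers $d$, hence $c$, hence all of $G$. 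So $\calT(G)\ne\emptyset$, and the three cyclic rotations of this triple listed in $\Delta$ are reversing triples.

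Since $\Aut(G)$ acts semiregularly on $\calT(G)$ (Lemma~\ref{semi}), the number of orbits is exactly $|\calT(G)|/|\Aut(G)|$. For $\ell=2$ the bounds above force $|\calT(G)|=48=|\Aut(G)|$, so there is a single orbit and $(ad,acd,cd)$ is a representative, which is part~(1). For $\ell\ge 3$ the orbit count is at most $3\cdot 2^{2\ell}/2^{2\ell}=3$, so it suffices to show the three triples in $\Delta$ lie in pairwise distinct orbits; this simultaneously pins the count at $3$ and makes $\Delta$ complete. The invariant I would use: call an involution \emph{$d$-type} if it lies in $\langle a\rangle d\cup\langle a\rangle d^{-1}$. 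By Lemma~\ref{Z-circ-Q-hypo-2}(3) the $d$-type involutions are exactly $a^{2^{\ell-2}}d$ and $a^{2^{\ell-2}}d^{-1}$, and by part~(4) every reversing triple contains precisely one of them. For $\ell\ge 3$, $\langle a\rangle$ is the unique cyclic subgroup of order $2^\ell$ of the characteristic subgroup $\langle a,c\rangle$, hence characteristic in $G$; together with the fact that $\Aut(G)$ permutes $\{d,d^{-1}\}$, this shows $\Aut(G)$ maps $d$-type involutions to $d$-type involutions. As $\Aut(G)$ acts diagonally on triples, it cannot change the position of the $d$-type entry, which sits in positions $1$, $2$, $3$ in the three members of $\Delta$ respectively; so they lie in three distinct orbits, proving part~(2).

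The only genuinely delicate point is this invariant, and specifically seeing why $\ell=2$ behaves differently: when $\ell=2$, $\Q_8$ has three cyclic subgroups of order $4$ permuted transitively by $\Aut(\Q_8)$, so $\langle a\rangle$ is no longer characteristic and an automorphism can carry a $d$-type involution such as $ad$ to a $cd$-type one such as $cd$ (indeed $a\leftrightarrow c$ extends to an automorphism of $G$ fixing $d$); this is exactly why the three rotations collapse into a single orbit. Everything else should be routine: the generation check for $\{a^{2^{\ell-2}}d,acd,cd\}$, the count of opposite-parity pairs, and the bookkeeping, with Lemmas~\ref{ZQ-aut}, \ref{Z-circ-Q-hypo-2} and \ref{semi} supplying the structural input.
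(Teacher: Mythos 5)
Your proposal is correct and takes essentially the same route as the paper's proof: it bounds $|\calT(G)|$ by $3\cdot 2^{2\ell}$ using the description of generating involution triples in Lemma~\ref{Z-circ-Q-hypo-2}, divides by $|\Aut(G)|$ via the semiregular action of Lemma~\ref{semi}, and for $\ell>2$ separates the three cyclic rotations by noting that $\Aut(G)$ fixes $\l a\r$ and the set $\{d,d^{-1}\}$, hence preserves the position of the unique entry of the form $a^{2^{\ell-2}}d^{\pm1}$. Your added care in treating the count as an upper bound and restoring equality through an explicit generation check for $\{a^{2^{\ell-2}}d,acd,cd\}$ is a minor tightening of the same argument, not a different approach.
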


\begin{proof}
	Let $ (x,y,z)\in\calT(G) $. By Lemma \ref{ZQ-aut}, $ \Aut(G)\cong\Aut(\Q_{{2^{\ell+1}}})\times\ZZ_2 $. Further, either $ \ell=2 $, $ \Aut(\Q_8)\cong\S_4 $, $ |\Aut(G)|=48 $; or $ \ell>2 $, then by Proposition \ref{2-gp-cycle}, $ \Aut(\Q_{{2^{\ell+1}}})\cong\Hol(\ZZ_{2^{\ell}}) $, $ |\Aut(G)|=2^{2\ell} $ and $ \l a\r $ is fixed by $ \Aut(G) $.
	
	By Lemma \ref{Z-circ-Q-hypo-2}, $\{x,y,z\}$ as a generating triple of involutions of $ G $ is one of $\{a^{2^{\ell-2}}d,a^icd, a^jcd\}$, $ \{a^{2^{\ell-2}}d^{-1},a^icd, a^jcd\} $, where $0\leqslant i,j<2^{\ell}$ and $ i-j $ is odd. Thus, there are precisely $ 2\cdot 3\cdot 2^{\ell}\cdot2^{\ell-1}=3\cdot 2^{2\ell} $ such triples, which by Lemma \ref{semi} fall into $ 3\cdot 2^{2\ell}/||\Aut(G)|=1 $ $\Aut(G)$-orbit if $ \ell=2 $, or into $ 3 $ $\Aut(G)$-orbits if $ \ell>2 $. 
	
	For the former case, choose $ (ad,acd,cd) $ as a representative. For the latter, since $ \l a\r $ is fixed by $ \Aut(G) $ and $ d $ is mapped by $ \Aut(G) $ to $ d^{\pm 1} $, then the element $ a^{2^{\ell-2}}d $ is mapped by $ \Aut(G) $ to some one lying in $ \l a,d^{\pm 1}\r $. Then $ (a^{2^{\ell-2}}d,acd,cd) $, $ (cd,a^{2^{\ell-2}}d,acd) $ and $ (acd, cd,a^{2^{\ell-2}}d) $ form a set of representatives for the 3 orbits.
\end{proof}

\begin{lemma}\label{revtri4}
	Let $ G=\D_{2^{\ell+1}}\times\ZZ_2=\l a, b\r\times\l c\r$, defined in {\rm Definition \ref{defi-direct}}. Then the following set $ \Delta $ is a set of representatives for the $ \Aut(G) $-orbits on $ \calT(G) $, where 
	\[\Delta=\{(b,ab,w),\,(w,b,ab),\,(ab,w,b)\,|\,w=c \mbox{\ or\ } a^{2x}bc,\, 0\le x<2^{\ell-2}\}.\]
\end{lemma}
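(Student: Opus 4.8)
The plan is to run the Frattini-quotient argument used for Lemma~\ref{revtri2}, now over the extra central $\ZZ_2$. First I would record, from Lemma~\ref{Klein-I}, that $\bar G:=G/\l a^2\r=\l\bar a,\bar b,\bar c\r\cong\ZZ_2^3$, and that since $G$ is not $2$-generated this forces $\l a^2\r=\Phi(G)$; hence a triple of involutions generates $G$ exactly when its image in $\bar G$ is an ordered basis. Reading $\Omega(G)$ off Lemma~\ref{Klein-I}, the involution images are $a_0\mapsto 1$, $c,a_0c\mapsto\bar c$, $a^ib\mapsto\bar b$ or $\bar a\bar b$ according to the parity of $i$, and $a^ibc\mapsto\bar b\bar c$ or $\bar a\bar b\bar c$ likewise; thus $a_0$ lies in no reversing triple, the involution images exhaust $S:=\bar G\setminus\{1,\bar a,\bar a\bar c\}$, the involutory fibre over $\bar c$ has size $2$ (namely $\{c,a_0c\}$), and each of the other four fibres has size $2^{\ell-1}$.

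Next I would count. A short count (inclusion--exclusion over the two forbidden nonzero elements $\bar a,\bar a\bar c$) shows $\bar G$ has $|\GL_3(2)|-120=48$ ordered bases with all entries in $S$; of these $24$ contain $\bar c$ and $24$ do not, the latter being the $24$ orderings of the four $3$-subsets of $\{\bar b,\bar a\bar b,\bar b\bar c,\bar a\bar b\bar c\}$, each of which is a basis. Multiplying by the fibre sizes gives $|\calT(G)|=24\cdot 2^{2\ell-1}+24\cdot 2^{3\ell-3}$; since $\Aut(G)$ acts semiregularly on $\calT(G)$ (Lemma~\ref{semi}) with $|\Aut(G)|=2^{2\ell+2}$ (Lemma~\ref{2-gp-aut-I}), the number of $\Aut(G)$-orbits is $3+3\cdot 2^{\ell-2}=|\Delta|$. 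It therefore suffices to verify that the entries of $\Delta$ are genuine reversing triples and are pairwise inequivalent.

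To organise that I would split $\calT(G)$ by whether some entry lies in $\{c,a_0c\}$; one checks this is the same as asking whether none (Case~A), or exactly one (Case~B), of the three pairwise products $xy,xz,yz$ has image $\bar c$. In Case~A the slot of the distinguished entry is an $\Aut(G)$-invariant, because Lemma~\ref{2-gp-aut-I} shows $\Aut(G)$ fixes $a_0$ and permutes $\{c,a_0c\}$; this gives the three orbits with representatives $(b,ab,c),(c,b,ab),(ab,c,b)$ --- e.g.\ the orbit of $(b,ab,c)$ has size $|\Aut(G)|$ and sits inside the set of the $2^{2\ell+2}$ Case-A triples with $\{c,a_0c\}$-entry in the third slot, hence is that whole set. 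In Case~B, $\bar c$ is fixed by the induced $\Aut(G)$-action on $\bar G$, so the slot-pair whose product maps to $\bar c$ is an invariant, taking the three distinct values that separate $(b,ab,w),(w,b,ab),(ab,w,b)$. Fixing one arrangement, say $(b,ab,w)$: using the explicit generators $\rho_5,\rho_{-1},\eta,\tau,\s$ of Lemma~\ref{2-gp-aut-I} every such reversing triple is equivalent to one with first two entries $b,ab$ and third entry of the form $a^{2r}bc$; and two such, $(b,ab,a^{2r}bc)$ and $(b,ab,a^{2r'}bc)$, are equivalent only via an automorphism fixing $a$ and $b$, namely the identity or $\s\colon(a,b,c)\mapsto(a,b,a^{2^{\ell-1}}c)$, which carries $a^{2r}bc$ to $a^{2r+2^{\ell-1}}bc$ --- so the residues $0\le r<2^{\ell-1}$ collapse to the $2^{\ell-2}$ classes represented by $r=x$, $0\le x<2^{\ell-2}$. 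The other two arrangements are symmetric; collecting everything gives precisely $\Delta$.

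The part I expect to absorb most of the work is the Case-B reduction: to see that the three cyclic arrangements really lie in three different $\Aut(G)$-orbits, and that the normalisation to $(b,ab,a^{2r}bc)$ is always achievable, one must pin down the image of $\Aut(G)$ in $\Aut(\bar G)=\GL_3(2)$ --- it is a Sylow $2$-subgroup, a copy of $\D_8$, with orbits $\{1\},\{\bar c\},\{\bar a,\bar a\bar c\},\{\bar b,\bar a\bar b,\bar b\bar c,\bar a\bar b\bar c\}$ on $\bar G$ --- and carry out the fibrewise normalisation with the generators of $\Aut(G)$ from Lemma~\ref{2-gp-aut-I}; everything else is routine counting.
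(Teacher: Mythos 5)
Your proposal is correct and follows essentially the same route as the paper: reduce to ordered bases of $G/\l a^2\r\cong\ZZ_2^3$, count the reversing triples in the two cases (an entry in $\{c,a_0c\}$ or not), divide by $|\Aut(G)|$ via the semiregularity of Lemma \ref{semi}, and separate the listed representatives using the invariance of $a_0$ and of the set $\{c,a_0c\}$ together with the fact that the stabiliser of $(b,ab)$ in $\Aut(G)$ is $\{1,\s\}$. The only cosmetic differences are your inclusion--exclusion packaging of the count and your use of the slot-pair whose product maps to $\bar c$ to separate the three cyclic arrangements in Case B, where the paper instead compares the orders $|b\cdot ab|=2^{\ell}$ and $|a^{2z}bc\cdot b|\le 2^{\ell-1}$.
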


\begin{proof}
	Let $ (x,y,z)\in\calT(G) $. Let $a_0=a^{2^{\ell-1}}$. By Lemma \ref{Klein-I}, the set of involutions $\Omega(G)=\{a_0,\,c,\,a_0c,\,a^\lambda b,\,a^\lambda bc\,|\,0\le \lambda<2^{\ell}\}$, and the factor group $ \ov G=G/\l a^2\r\cong\ZZ_2^3 $ is not generated by two elements. Then $ \ov G=\l \ov x,\ov y,\ov z\r $ implies that $\ov x,\ov y,\ov z\ne \ov 1$ are pair-wisely distinct with the product of any two not being the remaining one (otherwise $ \l \ov x,\ov y,\ov z\r\cong\ZZ_2^2 $). In particular, $ \ov {a_0}=\ov 1 $, so $ a_0\notin\{x,y,z\} $.
	
	Note that, $ \Z(G)=\Z(\l a,b\r)\times\Z(\l c\r)=\l a_0,c\r\cong \ZZ_2^2 $, which is fixed by $ \Aut(G) $. Then, according to Lemma \ref{2-gp-aut-I}, we have: $ |\Aut(G)|=2^{2\ell+2} $;  $ \Aut(G) $ fixes $ a_0 $ and also the set $ \{c,a_0c\} $. We thus have two non-equivalent cases as follows:
	
	(i) Suppose $\{x,y,z\}\cap\{c, a_0c\}\ne\emptyset$. Since $ \ov c=\ov {a_0c} $, we have $ |\{x,y,z\}\cap\{c,a_0c\}|=1 $. That is, one of $ x,y,z $ is $ c $ or $ a_0c $; meanwhile, the other two involutions $ u,v $ satisfy $ \ov u\ne\ov v $ and $ \ov u\ne \ov {vc} $. It implies that the pair $ (u,v) $ must be one of \[(a^ib,\,a^jb ),\,(a^ibc,\,a^jb ),\,(a^ib,\,a^jbc ),\,(a^ibc,\,a^jbc ),\] where $ 0\le i,j<2^{\ell} $ with $ i-j $ odd. Note that any triple given as above is a reversing triple. Thus, there are precisely $ 2\cdot3\cdot4\cdot2^{\ell}\cdot2^{\ell-1}=3\cdot 2^{2\ell+2} $ reversing triples in this case, which by Lemma \ref{semi} fall into $ 3\cdot 2^{2\ell+2} /|\Aut(G)|=3 $ $ \Aut(G) $-orbits. Recall that $ c $ is mapped by $ \Aut(G) $ to $ c $ or $ a_0c $. Then $ X=\{(b,ab,c),\,(c,b,ab),\, (ab,c,b) $\}  is a set of representatives for the $ 3 $ orbits.
	
	(ii) Suppose $\{x,y,z\}\cap\{c, a_0c\}=\emptyset$. Since $ G=\l x,y,z\r $ and $\ov x,\ov y,\ov z$ are pair-wisely distinct, we have $ \{x,y,z\}=\{a^ib,a^jb,a^kbc\} $ or $ \{a^ibc,a^jbc,a^kb\} $, where $ 0\le i,j,k<2^{\ell} $ and $ i-j $ is odd. Thus, there are precisely $ 2\cdot3\cdot2^{\ell}\cdot2^{\ell-1}\cdot2^{\ell}=3\cdot 2^{3\ell} $ reversing triples in this case, which by Lemma \ref{semi} fall into $ 2^{3\ell}/|\Aut(G)|=3\cdot 2^{\ell-2} $ $ \Aut(G) $-orbits. 
	
	Let $ Y=Y_1\cup Y_2\cup Y_3 $ be a set of reversing triples, where \[\mbox{$ Y_1=\{(b,ab,a^{2x}bc)\} $, $ Y_2=\{(a^{2x}bc,b,ab)\} $, $ Y_3=\{(ab,a^{2x}bc,b)\} $, $ 0\le x<2^{\ell-2}$.}\] Then $ Y $ is a set of representatives for these $ 3\cdot 2^{\ell-2} $ orbits, because 

	(a) (distinct) triples from the same $ Y_i $, w.l.o.g, say $ (b,ab,a^{2y}bc), (b,ab,a^{2y'}bc)\in Y_1 $, are non-equivalent: any $ 1\ne g\in\Aut(G) $ which fixes $ b $ and also $ ab $ must map $ c $ to $ a_0c $, then mapping $ a^{2y}bc $ to $ a^{2y}ba_0c=a^{2y+2^{\ell-1}}bc\ne a^{2y'}bc $, as $ 0\le 2y,2y'<2^{\ell-1} $;
	
    (b) triples from different $ Y_i $ are non-equivalent: if such two triples $ \tt_1,\tt_2 $ are equivalent with some $ g\in\Aut(G) $ such that $ \tt_1^g=\tt_2 $, then by choosing certain coordinates of $ \tt_1,\tt_2 $, we can always have $ (b,ab)^g=(a^{2z}bc,b) $ for some $ 0\le z<2^{\ell-2} $, which is impossible since $ |b\cdot ab|=|a^{-1}|=2^\ell $, but $ |a^{2z}bc\cdot b|=|a^{2z}c|\le 2^{\ell-1} $.
	
	At last, $ \Delta=X\cup Y$ is what we need.
\end{proof}

\begin{lemma}\label{revtri5}
	Let $G=\D_{2^{\ell+1}}{:}\ZZ_2=\l a, b\r{:}\l c\r$, defined in {\rm Definition \ref{defi-semidirect}}.
	Then $ \Delta=X\cup Y $ is a set of representatives for the $ \Aut(G) $-orbits on $ \calT(G) $, where \[X=\{(b,ab,c),\,(ab,b,c),\, (c,b,ab),\,(c,ab,b),\,(ab,c,b),\, (b,c,ab) \},\mbox{\ and}\]
	\[Y=\{(b,ab,w),\,(w,b,ab),\,(ab,w,b)\,|\,w=a^{2x}bc,\, 0\le x<2^{\ell-2}\}.\]
\end{lemma}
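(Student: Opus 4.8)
The plan is to run the strategy of Lemmas \ref{revtri2} and \ref{revtri4}: pass to the Frattini quotient, classify the involutions by their image there, list the possible ``shapes'' of a reversing triple, then use the semiregular $\Aut(G)$-action on $\calT(G)$ to count orbits and identify them with $X\cup Y$. First I would note that $\Phi(G)=\l a^2\r$: $G/\l a^2\r\cong\ZZ_2^3$ is elementary abelian (Lemma \ref{Klein-II}), and $G$ is generated by the three involutions $b,ab,c$ (since $b\cdot ab=a^{-1}$); hence $G/\Phi(G)\cong\ZZ_2^3=\l\bar a,\bar b,\bar c\r$, and for involutions $x,y,z$ one has $G=\l x,y,z\r$ iff $\{\bar x,\bar y,\bar z\}$ is a basis. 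Writing $\mathcal C_1=\{a^{2i}b\}$, $\mathcal C_2=\{a^{2i+1}b\}$, $\mathcal B=\{a^{2i}bc\}$ for $0\le i<2^{\ell-1}$, the involutions of Lemma \ref{Klein-II} project as $a_0\mapsto 1$ and $\mathcal C_1,\mathcal C_2,\{c,a_0c\},\mathcal B\mapsto \bar b,\bar a\bar b,\bar c,\bar b\bar c$ respectively. So $a_0$ never occurs in a reversing triple, whose three entries have distinct images in $\ZZ_2^3$ (being a basis), and hence must, up to order, consist of one element from each set in one of the combinations $\{\mathcal C_1,\mathcal C_2,\{c,a_0c\}\}$ (shape A), $\{\mathcal C_1,\mathcal C_2,\mathcal B\}$ (shape B), $\{\mathcal C_2,\{c,a_0c\},\mathcal B\}$ (shape C); the fourth combination $\{\mathcal C_1,\{c,a_0c\},\mathcal B\}$ is impossible because $\bar b\bar c\in\l\bar b,\bar c\r$. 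Conversely, every triple of one of these shapes generates $G$.

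Next I would invoke the automorphism data: by Lemma \ref{2-gp-aut-II} and the Remark following it, $|\Aut(G)|=2^{2\ell}$, the pair $\{c,a_0c\}$ is $\Aut(G)$-invariant, and hence so is $\mathcal C_2$ — it is exactly the set of involutions not commuting with $c$, a one-line check from $a^c=a^{2^{\ell-1}+1}$ (note that $c$ does commute with $\mathcal C_1$, $\mathcal B$ and $a_0$). By Lemma \ref{semi} the action on $\calT(G)$ is semiregular, so every orbit has length $2^{2\ell}$. A count then gives $3!\cdot2^{\ell-1}\cdot2^{\ell-1}\cdot2=3\cdot2^{2\ell}$ reversing triples of shape A, and likewise $3\cdot2^{2\ell}$ of shape C — together the $6\cdot2^{2\ell}$ triples meeting $\{c,a_0c\}$, i.e.\ $6$ orbits — and $3!\cdot2^{\ell-1}\cdot2^{\ell-1}\cdot2^{\ell-1}=3\cdot2^{3\ell-2}$ of shape B — the triples disjoint from $\{c,a_0c\}$, i.e.\ $3\cdot2^{\ell-2}$ orbits. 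These match $|X|=6$ and $|Y|=3\cdot2^{\ell-2}$.

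For the six $c$-meeting orbits I would use the $\Aut(G)$-invariant ``(position of the $\mathcal C_2$-entry, position of the $\{c,a_0c\}$-entry)'', which is well defined on shapes A and C and constant on orbits (the action is diagonal and fixes $\mathcal C_2$, $\{c,a_0c\}$ setwise). The six triples of $X$, all of shape A with $b\in\mathcal C_1$, $ab\in\mathcal C_2$, $c\in\{c,a_0c\}$, carry the six distinct values $(p,q)$, $p\ne q$, hence lie in six distinct orbits — necessarily all of them — so $X$ is a transversal. For the $c$-free orbits I would split $Y$ into the families $\{(b,ab,w)\}$, $\{(w,b,ab)\}$, $\{(ab,w,b)\}$ with $w=a^{2x}bc$, $0\le x<2^{\ell-2}$, distinguished by the invariant ``position of the $\mathcal C_2$-entry''; within one family, an equivalence $g$ with $(b,ab,w)^g=(b,ab,w')$ (where $w=a^{2x}bc$, $w'=a^{2x'}bc$) fixes $b$ and $ab$, hence $a=ab\cdot b$ and all of $\l a,b\r$, and since $c^g\in\{c,a_0c\}$ (Lemma \ref{2-gp-aut-II}) either $c^g=c$, so $g=1$ and $w=w'$, or $c^g=a_0c$, so $w^g=a^{2x+2^{\ell-1}}bc\ne w'$ as $0\le 2x,2x'<2^{\ell-1}$ — a contradiction. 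So the $3\cdot2^{\ell-2}$ triples of $Y$ are pairwise inequivalent, hence a transversal, and $\Delta=X\cup Y$.

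I expect the only real friction to be confirming that it is $\mathcal C_2$, rather than the larger $\Aut(G)$-invariant set $\mathcal C_1\cup\mathcal B$, that $\Aut(G)$ fixes — the point at which the split defining relation $a^c=a^{2^{\ell-1}+1}$ of this group is genuinely used — and, relatedly, keeping in mind that the $c$-meeting case here produces $6$ orbits, not the $3$ of the analogous Lemma \ref{revtri4}, the reason being that in the present group $\Aut(G)$ cannot fuse $\mathcal C_1$ with $\mathcal C_2$.
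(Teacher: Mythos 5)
Your proposal is correct and follows essentially the same route as the paper: pass to $\ov G=G/\l a^2\r\cong\ZZ_2^3$ to constrain which involutions can appear and in which combinations, count the resulting triples, divide by $|\Aut(G)|=2^{2\ell}$ via the semiregularity of Lemma \ref{semi} to get $6$ and $3\cdot2^{\ell-2}$ orbits, and separate the orbits using the $\Aut(G)$-invariance of $\{c,a_0c\}$ and of $\mathcal C_2$ (the paper phrases the latter as $[b,c]=1$ versus $[ab,c]=a_0\ne1$). Your explicit ``shape'' taxonomy and the ordered-pair position invariant are just a cleaner packaging of the paper's case split into $X_1,X_2,X_3$ and its within-$X_i$ commutator test.
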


\begin{proof}
	The proof follows a similar way as that of Lemma \ref{revtri4}. 
	
	Let $ (x,y,z)\in\calT(G) $, $a_0=a^{2^{\ell-1}}$. By Lemma \ref{Klein-II}, the set of involutions $\Omega(G)=\{a_0,\,c,\,a_0c,\,a^\lambda b,\,a^{2\lambda} bc\,|\,0\le \lambda<2^{\ell}\}$, $ \Z(G)=\l a_0\r\cong \ZZ_2 $, and the factor group $ \ov G=G/\l a^2\r\cong\ZZ_2^3 $. Then $ \ov G=\l \ov x,\ov y,\ov z\r $ implies that $\ov x,\ov y,\ov z\ne \ov 1$ are pair-wisely distinct with the product of any two not being the remaining one. In particular, $ \ov {a_0}=\ov 1 $, so $ a_0\notin\{x,y,z\} $. Note that, $ a_0 $ as the only central element is fixed by $ \Aut(G) $. Then, according to Lemma \ref{2-gp-aut-II}, we have: $ |\Aut(G)|=2^{2\ell} $; $ \Aut(G) $ maps $ c $ to $ c $ or $ a_0c $, so fixes the set $ \{c,a_0c\} $. We thus have two non-equivalent cases as follows:
	
	(i) Suppose $\{x,y,z\}\cap\{c, a_0c\}\ne\emptyset$. Since $ \ov c=\ov {a_0c} $, we have $ |\{x,y,z\}\cap\{c,a_0c\}|=1 $. That is, one of $ x,y,z $ is $ c $ or $ a_0c $; meanwhile, the other two involutions $ u,v $ satisfy $\ov u\ne\ov v$ and $ \ov u\ne \ov {vc} $. It implies that the pair $ (u,v) $ must be one of \[(a^ib,\,a^jb ),\,(a^mb,\,a^{2n}bc),\,(a^{2n}bc,\,a^mb)\] where $ 0\le i,j,m<2^{\ell} $, $ 0\le n<2^{\ell-1} $, and $ i-j,\,m $ are odd. Note that any triple given as above is a reversing triple. Thus, there are precisely $ 2\cdot3\cdot(2^{\ell}\cdot2^{\ell-1}+2\cdot2^{\ell-1}\cdot2^{\ell-1})=3\cdot 2^{2\ell+1} $ reversing triples in this case, which by Lemma \ref{semi} fall into $ 3\cdot 2^{2\ell+1} /|\Aut(G)|=6 $ $ \Aut(G) $-orbits. Let $ X=X_1\cup X_2 \cup X_3 $, where  \[X_1=\{(b,ab,c),\,(ab,b,c)\},\,X_2= \{(c,b,ab),\,(c,ab,b)\}, \, X_3=\{(ab,c,b),\, (b,c,ab)\}.\] Since $ c $ is mapped by $ \Aut(G) $ to $ c $ or $ a_0c $, triples lying in different $ X_i $ are mutually non-equivalent. Meanwhile, $ [b,c]=1 $ but $ [ab,c]=a_0\ne 1 $, so the two triples lying in the same $ X_i $ are non-equivalent. Then $ X $ is a set of representatives for the $ 6 $ orbits.
	
	(ii) Suppose $\{x,y,z\}\cap\{c, a_0c\}=\emptyset$. Since $ G=\l x,y,z\r $ and $\ov x,\ov y,\ov z$ are pair-wisely distinct, we have $ \{x,y,z\}=\{a^ib,a^jb,a^{2k}bc\} $, where $ 0\le i,j<2^{\ell} $, $ 0\le k<2^{\ell-1} $ and $ i-j $ is odd. Thus, there are precisely $ 3\cdot2^{\ell}\cdot2^{\ell-1}\cdot2^{\ell-1}=3\cdot 2^{3\ell-2} $ reversing triples in this case, which by Lemma \ref{semi} fall into $ 3\cdot 2^{3\ell-2} /|\Aut(G)|=3\cdot 2^{\ell-2} $ $ \Aut(G) $-orbits. It is similar as Lemma \ref{revtri4} to show: $ Y=Y_1\cup Y_2\cup Y_3 $ with \[\mbox{$ Y_1=\{(b,ab,a^{2x}bc)\} $, $ Y_2=\{(a^{2x}bc,b,ab)\} $, $ Y_3=\{(ab,a^{2x}bc,b)\} $, $ 0\le x<2^{\ell-2} $},\] is a set of representatives for these orbits.
	
	At last, $ \Delta=X\cup Y$ is what we need.
\end{proof}

\begin{proposition}\label{2gp-rev-maps}
	Let $ G $ be a finite $ 2 $-group and $ \calM $ be a $ G $-vertex-reversing map with $4\nmid\chi(\calM)$. Then $ G $ and $ \calM=\calM(G,x,y,z) $ are listed in the following table, where $ [x,y,z] $ denotes  the set $ \{(x,y,z),(z,x,y),(y,z,x)\} $.
	\begin{table}[h]
		\newcommand{\tabincell}[2]{\begin{tabular}{@{}#1@{}}#2\end{tabular}}
		
		\caption{\small $ G $-vertex-reversing maps}
		\centering
		\scalebox{0.8}{
			\begin{tabular}{ccccc}
				\toprule[1pt]
				$G$ & $(x,y,z)$ &  {\rm type}  & $ \chi(\calM)$ & {\rm remark}\\
				\midrule[1pt]
				
				$ \ZZ_2^2=\l a,b\r $ & \tabincell{c}{$ (a,ab,b) $ \\ $[a,b,b]$ \\ $(a,a,b)$} &
				\tabincell{c}{$ 2^\ast $ \\ $ 2^\ast $ \\ $ 2^P $} & \tabincell{c}{$ 1 $ \\ $ 2 $ \\ $ 2$} & \\
				\midrule[0.5pt]
				
				$ \ZZ_2^3=\l a,b, c\r $ & $ (a,b,c) $ & $ 2^\ast $ 
				& $ 2 $ &  \\
				\midrule[0.5pt]
				
				\tabincell{c}{$\D_{2^{\ell+1}}=\l a\r{:}\l b\r$} & \tabincell{c}{$[b,ab,a_0] $ \\ $[b,ab,a^{2x}b] $ \\ $ (b,ab,a_0),(b,ab,a^{2x}b),(ab,a^{2x}b,b) $ \\ $ (a^{2x}b,b,ab) $} & 
				\tabincell{c}{$ 2^\ast $ \\$ 2^\ast $ \\ $ 2^P $ \\ $ 2^P $} &
				\tabincell{c}{$1$ \\ $ 2-2^{\ell}+2^{s} $ \\ $2-2^{\ell}$ \\ $ 2-2^{\ell}+2^{s} $} & \tabincell{c}{ \\ $ 2\le s\le \ell $ \\ \\ $ 2\le s\le \ell $ } \\
				\midrule[0.5pt]
			    
			    $ \Q_{8}\circ\ZZ_4=\l a,c\r\circ\l d\r $ & $ (ad,acd,cd) $ & $ 2^\ast $ 
			    & $ -2 $ & \\
			    \midrule[0.5pt]
			     
			    $ \Q_{2^{\ell+1}}\circ\ZZ_4=\l a,c\r\circ \l d\r
			     $ & $ 
			     [a^{2^{\ell-2}}d,cd,acd] $ & $ 2^\ast $ 
			    & $ 2-2^{\ell} $ & $ \ell\ge3 $\\
			    \midrule[0.5pt]
			    
			    $ \D_{2^{\ell+1}}\times\ZZ_2=\l a, b\r\times\l c\r $ & $ 
			    [b,ab,c] $ & $ 2^\ast $ 
			    & $ 2 $ & \\
			    \midrule[0.5pt]
			    
			    $ \D_{2^{\ell+1}}{:}\ZZ_2=\l a, b\r{:}\l c\r $ & $ 
			    [b,ab,c],\, [ab,b,c] $ & $ 2^\ast $ 
			    & $ 2-2^{\ell-1} $ & \\
				\bottomrule[1pt]
		\end{tabular}}
	\end{table}
\end{proposition}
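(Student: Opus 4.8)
The proof is a finite, explicit case check, organised by the group $G$ and then by the $\Aut(G)$-orbit of the defining reversing triple. First I cut down the possibilities for $G$: by Lemma~\ref{arc-map} a $2$-group carrying a vertex-reversing map with $4\nmid\chi$ appears in Theorem~\ref{thm:p-gps}, and since a vertex-reversing map requires $G=\l x,y,z\r$ with $|x|=|y|=|z|=2$, the group $G$ is generated by involutions. By Proposition~\ref{2-gp-cycle} this eliminates every non-dihedral $2$-group with a cyclic maximal subgroup, leaving exactly $\ZZ_2^2$, $\D_{2^{\ell+1}}$, $\ZZ_2^3$, $\Q_{2^{\ell+1}}\circ\ZZ_4$, $\D_{2^{\ell+1}}\times\ZZ_2$, $\D_{2^{\ell+1}}{:}\ZZ_2$ --- the six groups already singled out at the start of Section~4.

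For each such $G$, Lemmas~\ref{revtri1}--\ref{revtri5} provide an explicit transversal $\Delta$ for the $\Aut(G)$-action on the set $\calT(G)$ of reversing triples, and by the remarks after Lemma~\ref{semi} equivalent triples give isomorphic maps; so it suffices to run through the finitely many parametric families in $\Delta$ and, for each, apply the two vertex-reversing constructions of Section~4 (following \cite{GW,ST}). Writing $n=|G|$, the Section~4 data give, for the type-$2^\ast$ map built from $(x,y,z)$, $\chi=n/|\l x,y\r|-n/2+n/|\l x,z\r|+n/|\l y,z\r|$ (stabilizers $G_v=\l x,y\r$, $G_e=\l z\r$, and the two faces at an edge having stabilizers $\l x,z\r$ and $\l y,z\r$), while for the type-$2^P$ map $\chi=n/|\l x,y\r|-n/2+n/|\l x,y^z\r|$. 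Thus everything reduces to computing the orders of $\l x,y\r$, $\l x,z\r$, $\l y,z\r$, $\l x,y^z\r$ inside the explicitly presented $G$, evaluating $\chi$, and keeping precisely those maps with $4\nmid\chi$.

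These subgroups are, in every case, a visible copy of $\ZZ_2^2$, of a dihedral or generalized quaternion group, or the whole of $G$, so their orders are read off directly --- the one delicate point being that, in the dihedral families, a generator of the form $a^{2x}b$ or $a^{2x}bc$ makes the order of a subgroup like $\l b,a^{2x}b\r=\l a^{2x},b\r$ or $\l a^{2x}b,a^2b\r$ depend on the $2$-adic valuations $v_2(2x)$ and $v_2(2x-2)$. This is precisely what produces the parameter $s$: converting the index $x$ of Lemmas~\ref{revtri2}--\ref{revtri5} into $s$ via these valuations gives $\chi=2-2^\ell+2^s$ with $1\le s\le\ell$, and one then discards the case $s=1$ (where $\chi=4-2^\ell\equiv0\pmod 4$), which is why the table records $2\le s\le\ell$. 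Carrying this out group by group: $\ZZ_2^2$ and $\ZZ_2^3$ are immediate (only $\chi=1,2$ survive); $\D_{2^{\ell+1}}$ gives the four recorded rows, the type-$2^P$ maps from the two other cyclic shifts of $(b,ab,a_0)$ having $\chi=0$ and being dropped; $\Q_{2^{\ell+1}}\circ\ZZ_4$ contributes only the type-$2^\ast$ family $[a^{2^{\ell-2}}d,cd,acd]$ with $\chi=2-2^\ell$; and for $\D_{2^{\ell+1}}\times\ZZ_2$, resp.\ $\D_{2^{\ell+1}}{:}\ZZ_2$, every orbit except $[b,ab,c]$, resp.\ $[b,ab,c]$ and $[ab,b,c]$ --- whose type-$2^\ast$ maps have $\chi=2$, resp.\ $\chi=2-2^{\ell-1}$ --- yields a map with $4\mid\chi$ and is discarded.

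Assembling the surviving pairs $(G,\calM)$ into the table and reading off the Euler characteristics gives $\chi\in\{1,\,2,\,2-2^\ell,\,2-2^\ell+2^s\}$, as asserted. The only genuine obstacle is organisational: keeping the $x$-parametrizations of Lemmas~\ref{revtri2}--\ref{revtri5} in exact correspondence with the $s$-parametrization of the Euler characteristics (so that no family of maps is omitted or counted twice), and discarding precisely the sub-families with $4\mid\chi$ --- these being exactly the ones where the controlling subgroup order forces $\chi$ into the residue class $0$ modulo $4$.
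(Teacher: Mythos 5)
Your proposal is correct and follows essentially the same route as the paper's proof: restrict $G$ via Lemma~\ref{arc-map} and the generation-by-involutions condition, take the $\Aut(G)$-orbit representatives from Lemmas~\ref{revtri1}--\ref{revtri5}, evaluate $\chi$ for types $2^\ast$ and $2^P$ with the same stabilizer formulas, and discard the sub-families with $4\mid\chi$ (in particular the valuation analysis producing $2-2^\ell+2^s$ with $s=1$ excluded matches the paper's case (i.2) exactly). The only quibble is cosmetic: the subgroups $\l x,z\r$ generated by two involutions are always dihedral (possibly $\ZZ_2^2$), never generalized quaternion, but this does not affect the argument.
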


\begin{proof}
	By Lemma \ref{arc-map}, $ G $ is one of the groups appearing in Theorem \ref{thm:p-gps}. Now, the reversing triples for such groups are determined in Lemma \ref{revtri1}\,-\,\ref{revtri5}. We therefore need only select, from all corresponding maps, those satisfying the condition $ 4\nmid\chi(\calM) $. Recall that, respectively for type $ 2^* $ and $ 2^P $ we have \[\mbox{$\chi_1(\calM)={|G|\over|\l x,y\r|}-{|G|\over2}+{|G|\over|\l x,z\r|}+{|G|\over|\l y,z\r|}$, and $\chi_2(\calM)={|G|\over| \l x,y\r|}-{|G|\over2}+{|G|\over| \l x,y^z\r|}$}.\] 
	
	For convenience, denote simply by $ \tt$ a reversing triple. For a given triple $(x,y,z) $, respectively denote by $ a_1, a_2, a_3 $, $ a_4 $ the number $ |\l x,y\r|,|\l y,z\r|, |\l z,x\r| $, $ |\l x,y^z\r| $, and by $ [x,y,z] $ the set of triples $ \{(x,y,z),(z,x,y),(y,z,x)\} $ obtained by cyclic permutations of $ (x,y,z) $. Note that, $ \chi_1(\calM) $ takes the same value for the three triples lying in $ [x,y,z]$.
	
	The small-order case $ G=\ZZ_2^2 $ or $ \ZZ_2^3 $ is simple, directly presented in the table.
	
	(i) Suppose $G=\D_{2^{\ell+1}}=\l a\r{:}\l b\r$, $ \ell\ge 2 $. Let $ a_0=a^{2^{\ell-1}} $. 
	
	(i.1) Let $\tt\in[b,ab,a_0]$. Then $ \{a_1,a_2,a_3\}=\{4,4,2^{\ell+1}\} $, so $ \chi_1(\calM)=1 $. Meanwhile, if $ \tt=(b,ab,a_0) $ with $ (ab)^{a_0}=ab $, then $ a_1=a_4=2^{\ell+1} $ ; if $ \tt=(a_0,b,ab) $, $ (ab,a_0,b) $ with $ b^{ab}=a^2b $, $ a_0^b=a_0 $, then $ a_1=a_4=4 $, so respectively $ \chi_2(\calM)=2-2^{\ell} $ or $ 0 $.
	
	(i.2) Let $\tt\in[b,ab,a^{2x}b]$, $ 0\le x<2^{\ell-1} $. Then $ \{a_1,a_2,a_3\}=\{2^{\ell+1},2^{\ell+1},2|a^{2x}|\} $, where $ |a^{2x}|=2^{\ell-s} $, with $ s $ the largest number within $ 1\le s\le\ell $ such that $ 2^{s-1}\mid x $, so $ \chi_1(\calM)=2-2^{\ell}+2^s $. Note that, if $ s>1 $, i.e., $ 2\mid x $, then $ 4\nmid \chi_1(\calM) $. Meanwhile, if $ \tt=(b,ab,a^{2x}b) $, $ (ab,a^{2x}b,b) $ with $ (ab)^{a^{2x}b}=a^{4x-1}b $, $ (a^{2x}b)^b=a^{-2x}b $, then $ a_1=a_4=2^{\ell} $, so $ \chi_2(\calM)=2-2^\ell $; if $ \tt=(a^{2x}b,b,ab) $ with $ b^{ab}=a^2b $, then $ (a_1,a_4)=(2|a^{2x}|,2|a^{2x-2}|) $. Note that, one and only one of $ 2x, 2x-2 $ is not divisible by $ 4 $, so one of $ a_1,a_4 $ is equal to $ 2^{\ell} $, while the other is equal to $ 2^t $ with $ 1\le t\le \ell-1 $, so $ \chi_2(\calM)=2-2^\ell+2^s $, where $ 2\le s\le \ell $.\vs
	
	(ii) Suppose $G=\Q_{2^{\ell+1}}\circ\ZZ_4=\l a,c\r\circ \l d\r$. The degenerate case $ \ell=2 $ fits the general $\ell>2$ treatment. Let $ \tt\in [a^{2^{\ell-2}}d,acd,cd] $. Since $ a^{2^{\ell-2}}d\cdot acd=a^{2^{\ell-2}+1}cd^2$, $ a^{2^{\ell-2}}d\cdot cd=a^{2^{\ell-2}}cd^2$ are of order $ 4 $, and $acd\cdot cd=a $ is of order $ 2^{\ell} $, so $ \{a_1,a_2,a_3\}=\{8,8,2^{\ell+1}\} $, and then $ \chi_1(\calM)=2-2^{\ell} $. Meanwhile, if $ \tt=(a^{2^{\ell-2}}d,acd,cd), (cd,a^{2^{\ell-2}}d,acd) $ with $ (acd)^{cd}=a^{-1}cd $, $ (a^{2^{\ell-2}}d)^{acd}=a^{-2^{\ell-2}}d $, then $ a_1=a_4=8 $, so $ \chi_2(\calM)=-2^{\ell} $; if $ \tt=(acd,cd,a^{2^{\ell-2}}d) $ with $ (cd)^{a^{2^{\ell-2}}d}=c^3d $, then $ a_1=a_4=2^{\ell+1} $, so $ \chi_2(\calM)=4-2^{\ell+1} $.\vs

	(iii) Suppose $G=\D_{2^{\ell+1}}\times\ZZ_2=(\l a\r{:}\l b\r)\times\l c\r$, $ \ell\ge 2 $. 
	
	(iii.1) Let $\tt\in[b,ab,c]$. Then $ \{a_1,a_2,a_3\}=\{4,4,2^{\ell+1}\} $, so $ \chi_1(\calM)=2 $. Meanwhile, if $ \tt=(b,ab,c) $ with $ (ab)^{c}=ab $, then $ a_1=a_4=2^{\ell+1} $ ; if $ \tt=(c,b,ab) $, $ (ab,c,b) $ with $ b^{ab}=a^2b $, $ c^b=c $, then $ a_1=a_4=4 $, so respectively $ \chi_2(\calM)=4-2^{\ell+1} $ or $ 0 $.
	
	(iii.2) Let $\tt\in[b,ab,a^{2x}bc]$, $ 0\le x<2^{\ell-2} $. Then $ \{a_1,a_2,a_3\}=\{2^{\ell+1},2^{\ell+1},2|a^{2x}c|\} $, so $ \chi_1(\calM)=4-2^{\ell+1}+{2^{\ell+2}\over2|a^{2x}c|} $ is divisible by $ 4 $, as $ |a^{2x}c|\le 2^{\ell-1} $. Meanwhile, if $ \tt=(b,ab,a^{2x}bc) $, $ (ab,a^{2x}bc,b) $ with $ (ab)^{a^{2x}bc}=a^{4x-1}b $, $ (a^{2x}bc)^b=a^{-2x}bc $, then $ a_1=a_4=2^{\ell+1} $, so $ \chi_2(\calM)=4-2^{\ell+1} $; if $ \tt=(a^{2x}bc,b,ab) $ with $ b^{ab}=a^2b $, then $ (a_1,a_4)=(2|a^{2x}c|,2|a^{2x-2}c|) $, so $ \chi_2(\calM)={2^{\ell+2}\over2|a^{2x}c|}-2^{\ell+1}+{2^{\ell+2}\over2|a^{2x-2}c|} $ is divisible by $ 4 $, as $ |a^{2x}c|,|a^{2x-2}c|\le 2^{\ell-1} $.\vs
	
	(iv) Suppose $G=\D_{2^{\ell+1}}{:}\ZZ_2=(\l a\r{:}\l b\r){:}\l c\r$ with $(a,b)^c=(a_0a,b)$, $ a_0=a^{2^{\ell-1}} $, $ \ell\ge 3 $. 
	
	(iv.1) Let $\tt\in[b,ab,c]$ or $ [ab,b,c]$. As $ (abc)^2=ab(ab)^c=a_0 $, $ |abc|=4 $. Then $ \{a_1,a_2,a_3\}=\{4,8,2^{\ell+1}\} $, so $ \chi_1(\calM)=2-2^{\ell-1} $. Meanwhile, if $ \tt=(b,ab,c),(ab,b,c) $ with $ (ab)^{c}=a_0ab, b^c=b $, then $ a_1=a_4=2^{\ell+1} $ ; if $ \tt=(c,b,ab),(ab,c,b), (b,c,ab) $ with $ (b)^{ab}=a^2b, c^b=b, c^{ab}=a_0c$, then $ a_1=a_4=4 $ ; if $ \tt=(c,ab,b) $ with $ (ab)^{b}=a^{-1}b $, then $ |abc|=|a^{-1}bc|=4$, $a_1=a_4=8 $, so respectively $ \chi_2(\calM)=4-2^{\ell+1} $, $ 0 $ or $-2^{\ell}$.
	
	(iv.2) Let $\tt\in[b,ab,a^{2x}bc]$, $ 0\le x<2^{\ell-2} $. Then $ \{a_1,a_2,a_3\}=\{2^{\ell+1},2^{\ell+1},2|a^{2x}c|\} $, so $ \chi_1(\calM)=4-2^{\ell+1}+{2^{\ell+2}\over2|a^{2x}c|} $ is divisible by $ 4 $, as $ [a^2,c]=1 $ and so $ |a^{2x}c|\le 2^{\ell-1} $. Meanwhile, if $ \tt=(b,ab,a^{2x}bc) $, $ (ab,a^{2x}bc,b) $ with $ (ab)^{a^{2x}bc}=a_0a^{4x-1}b $, $ (a^{2x}bc)^b=a^{-2x}bc $, then $ a_1=a_4=2^{\ell+1} $, so $ \chi_2(\calM)=4-2^{\ell+1} $; if $ \tt=(a^{2x}bc,b,ab) $ with $ b^{ab}=a^2b $, then $ (a_1,a_4)=(2|a^{2x}c|,2|a^{2x-2}c|) $, so $ \chi_2(\calM)={2^{\ell+2}\over2|a^{2x}c|}-2^{\ell+1}+{2^{\ell+2}\over2|a^{2x-2}c|} $ is divisible by $ 4 $, as $ [a^2,c]=1 $ and so $ |a^{2x}c|,|a^{2x-2}c|\le 2^{\ell-1} $.
\end{proof}

\subsection{Regular triples \& regular maps} In Lemma \ref{revtri1}\,-\,\ref{revtri5}, for $ G $ a $ 2 $-group having a cyclic or dihedral maximal subgroup, we have determined the set $ \calT(G) $ of reversing triples for $ G $, with a set of representatives $ \Delta $ given for the $ \Aut(G) $-orbits on $ \calT(G) $. Denote by $ \calT_0(G) $ the set of regular triples for $ G $. By definition, $ \calT_0(G)\subset\calT(G) $, and a set of representatives $ \Delta_0 $ for the $ \Aut(G) $-orbits on $ \calT_0(G) $ is simply obtained by choosing the triples $ (x,y,z) $ from $ \Delta $ satisfying: $ \l x,z\r\cong\ZZ_2^2 $, i.e., $ [x,z]=1 $ and $ x\ne z $. Consequently, we have

\begin{corollary}\label{regtri}
	Let $ G $ be a $ 2 $-group that has a cyclic or dihedral maximal subgroup. Suppose $\calT_0(G)\ne\emptyset$, that is, $ G $ having a regular triple. Then $ G $ is one of the following groups, with $ \Delta_0 $ a set of representatives for the $ \Aut(G) $-orbits on $ \calT_0(G) $ given.
	\begin{itemize}
		\item[\rm (1)] $ G=\ZZ_2^2=\l a,b\r $, $ \Delta_0=\{(a,ab,b),\,(a,b,b),\,(b,b,a)\} ${\rm ;}\vs
		\item[\rm (2)] $ G=\ZZ_2^3=\l a,b,c\r $, $ \Delta_0=\{(a,b,c)\}${\rm ;}\vs
		\item[\rm(3)] $G=\D_{2^{\ell+1}}=\l a\r{:}\l b\r$, $ \Delta_0=\{(b,ab,a_0),\,(b,ab,a_0b),\, (a_0,b,ab)\}$, $a_0=a^{2^{\ell-1}}${\rm ;}\vs
		\item[\rm (4)] $ G=\D_{2^{\ell+1}}\times\ZZ_2=(\l a\r{:}\l b\r)\times\l c\r$, $ \Delta_0=\{(b,ab,c),\,(b,ab,bc),\, (c,b,ab)\}$.\vs
		\item[\rm (5)] $ G=\D_{2^{\ell+1}}{:}\ZZ_2=(\l a\r{:}\l b\r){:}\l c\r$ with $ (a,b)^c=(a^{2^{\ell-1}+1},b) $, then the set $ \Delta_0 $ is $ \Delta_0=\{(b,ab,c),\,(c,ab,b),\, (b,ab,bc)\}$.
	\end{itemize}
\end{corollary}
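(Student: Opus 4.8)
The plan is to build on the reduction recorded in the paragraph just before the statement. Since $\calT_0(G)\subseteq\calT(G)$, the group $G$ must be among the six groups for which $\calT(G)\neq\emptyset$, namely $\ZZ_2^2$, $\D_{2^{\ell+1}}$, $\ZZ_2^3$, $\Q_{2^{\ell+1}}\circ\ZZ_4$, $\D_{2^{\ell+1}}\times\ZZ_2$ and $\D_{2^{\ell+1}}{:}\ZZ_2$; and since a reversing triple $(x,y,z)$ is regular exactly when $\l x,z\r\cong\ZZ_2^2$, i.e. when $x\neq z$ and $[x,z]=1$, a transversal $\Delta_0$ for the $\Aut(G)$-orbits on $\calT_0(G)$ is obtained from the transversal $\Delta$ supplied by Lemmas \ref{revtri1}--\ref{revtri5} simply by deleting every representative that fails this condition. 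So the proof reduces to a finite, group-by-group check.

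First I would eliminate $\Q_{2^{\ell+1}}\circ\ZZ_4$: by Lemma \ref{Z-circ-Q-hypo-2}(4) every generating triple of involutions of this group consists of pairwise non-commutative elements, so $\calT_0(G)=\emptyset$ and this family does not occur. For the two elementary abelian groups the condition $[x,z]=1$ is vacuous, so one only discards the representatives with $x=z$; by Lemma \ref{revtri1} this leaves $\{(a,ab,b),(a,b,b),(b,b,a)\}$ for $\ZZ_2^2$ and $\{(a,b,c)\}$ for $\ZZ_2^3$, giving (1) and (2).

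For the three dihedral-type families the check is a short sequence of commutator computations. In $\D_{2^{\ell+1}}=\l a\r{:}\l b\r$ with $a_0=a^{2^{\ell-1}}$, for any non-central involution $a^mb$ one has $b\cdot a^mb=a^{-m}$ and $a^mb\cdot b=a^m$, so $b$ commutes with $a^mb$ iff $a^{2m}=1$ iff $2^{\ell-1}\mid m$, while $ab$ commutes with neither $b$ nor $a^{2x}b$ once $\ell\ge2$. Applying this to the representatives of Lemma \ref{revtri2} keeps exactly $(b,ab,a_0)$, the single member $(b,ab,a_0b)$ of the family $\{(b,ab,a^{2x}b)\}$ (namely $x=2^{\ell-2}$), and $(a_0,b,ab)$, which is the list in (3). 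The analogous computations for $\D_{2^{\ell+1}}\times\ZZ_2$, where $c$ is central, and for $\D_{2^{\ell+1}}{:}\ZZ_2$, where $[a,c]=a_0\neq1$ so that every representative pairing $ab$ with $c$ is discarded, cut the transversals of Lemmas \ref{revtri4} and \ref{revtri5} down to the lists in (4) and (5); in both families the only surviving member of the parametrized family $\{(b,ab,w)\}$ is $w=bc$, corresponding to $x=0$.

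The only real obstacle is bookkeeping: in each dihedral-type family one must inspect every representative of $\Delta$, and when a parametrized sub-family is present determine precisely which parameter values satisfy $[x,z]=1$ rather than discarding the sub-family outright. Once these finitely many verifications are carried out, the corollary follows immediately.
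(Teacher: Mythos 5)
Your proposal is correct and follows essentially the same route as the paper: both filter the transversals $\Delta$ of Lemmas \ref{revtri1}--\ref{revtri5} by the $\Aut(G)$-invariant condition $\l x,z\r\cong\ZZ_2^2$, discard $\Q_{2^{\ell+1}}\circ\ZZ_4$ via Lemma \ref{Z-circ-Q-hypo-2}(4), and carry out the same commutator computations (e.g.\ $[b,a^{2x}b]=a^{-4x}$ forcing $x=0$ or $2^{\ell-2}$) to isolate the surviving parameter values in each dihedral-type family.
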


\noindent{\it Proof.} 
    For abelian case $ G=\ZZ_2^2 $ or $ \ZZ_2^3 $, it is trivial by Lemma \ref{revtri1}; for $ G= \Q_{2^{\ell+1}}\circ\ZZ_4 $,  $ G $ has no regular triples by Lemma \ref{Z-circ-Q-hypo-2} (or Lemma \ref{revtri3}).
	
	For $ G=\D_{2^{\ell+1}}=\l a\r{:}\l b\r $, by Lemma \ref{revtri2} we have \[ \Delta=\{(b,ab,w),(w,b,ab),(ab,w,b)\,|\,w=a_0 \mbox{\ or\ } a^{2x}b, 0\le x<2^{\ell-1}\},\] and then $ \Delta_0=\{(b,ab,a_0),\,(b,ab,a_0b),\, (a_0,b,ab)\} $, because 
	\begin{itemize}
		\item[(i)] $ [b,a_0]=1 $, and $ [b,a^{2x}b]=a^{-4x}=1 $ implies $ x=0$ or $ 2^{\ell-2} $, $ a^{2x}b=b$ or $ a_0b $;
		\item[(ii)] $ [a_0,ab]=1 $, $ [a^{2x}b,ab]=a^{4x-2}\ne 1 $; and $ [ab,b]=a^2\ne 1 $.
	\end{itemize}  
	
	For $ G=\D_{2^{\ell+1}}\times\ZZ_2=(\l a\r{:}\l b\r)\times\l c\r$, by Lemma \ref{revtri4} we have \[ \Delta=\{(b,ab,w),\,(w,b,ab),\,(ab,w,b)\,|\,w=c \mbox{\ or\ } a^{2x}bc,\, 0\le x<2^{\ell-2}\},\] and then $ \Delta_0=\{(b,ab,c),\,(b,ab,bc),\, (c,b,ab)\}$, because 
	\begin{itemize}
		\item[(i)] $ [b,c]=1 $, and $ [b,a^{2x}bc]=(a^{-2x}c)^2=a^{-4x}=1 $ implies $ x=0 $, $ a^{2x}bc=bc $;
		\item[(ii)] $ [c,ab]=1 $, $ [a^{2x}bc,ab]=(a^{2x-1}c)^2=a^{4x-2}\ne 1 $; and $ [ab,b]=a^2\ne 1 $.
	\end{itemize}
	
	For $ G=\D_{2^{\ell+1}}{:}\ZZ_2=(\l a\r{:}\l b\r){:}\l c\r$ with $ (a,b)^c=(a_0a,b) $, $a_0=a^{2^{\ell-1}} $, by Lemma \ref{revtri5} we have $ \Delta=X\cup Y $, where \[X=\{(b,ab,c),\,(ab,b,c),\, (c,b,ab),\,(c,ab,b),\,(ab,c,b),\, (b,c,ab) \},\mbox{\ and}\]
	\[Y=\{(b,ab,w),\,(w,b,ab),\,(ab,w,b)\,|\,w=a^{2x}bc,\, 0\le x<2^{\ell-2}\},\]
	and then $ \Delta_0=\{(b,ab,c),\,(c,ab,b),\, (b,ab,bc)\}$, because
	\begin{itemize}
		\item[(i)] $ [b,c]=1$, $[ab,c]=ab(ab)^c=a_0\ne1$, $[ab,b]=a^2\ne 1$;
		\item[(ii)] $ [b,a^{2x}bc]=(a^{-2x}c)^2=a^{-4x}=1 $ implies $ x=0 $, $ a^{2x}bc=bc $;
		\item[(iii)] $ [a^{2x}bc,ab]=a^{2x}b(aba^{2x}b)^cab=a^{2x}b(a_0a^{1-2x})ab=a_0a^{4x-2}\ne 1 $.\qed
	\end{itemize}

\begin{proposition}\label{2gp-reg-maps}
	Let $ G $ be a finite $ 2 $-group and $ \calM $ be a $ G $-regular map with $4\nmid\chi(\calM)$. Then $ G $ and $ \calM=\calM(G,x,y,z) $ are listed in the following table.
	\begin{table}[h]
		\newcommand{\tabincell}[2]{\begin{tabular}{@{}#1@{}}#2\end{tabular}}
		\centering
	    \caption{\small $ G $-regular maps}
			\scalebox{0.8}{
				\begin{tabular}{cccc}
				\toprule[1pt]
				$ \calM $ & $G$ & $(x,y,z)$ & $ \chi(\calM)$\\
				\midrule[1pt]
				
				\tabincell{c}{$ \EM_1(2) $ \\ $ \EM_5(2), \EM_6(2) $ } & $ \ZZ_2^2=\l a,b\r $ & \tabincell{c}{$ (a,ab,b) $ \\ $(a,b,b)$ or $(b,b,a)$}  & \tabincell{c}{$ 1 $\\ $ 2 $}\\
				\midrule[0.5pt]
				
				\tabincell{c}{$ \EM_1(2^{\ell}), \EM_2(2^{\ell}) $ \\ $ \EM_4(2^{\ell}) $, $ \ell\ge3 $} & $\D_{2^{\ell+1}}=\l a\r{:}\l b\r$ & \tabincell{c}{$(b,ab,a_0) $ or $(a_0,b,ab)$ \\ $(b,ab,a_0b)$}  & \tabincell{c}{$1$\\$ 2-2^{\ell-1} $}\\
				\midrule[0.5pt]
				
				$ \M_{2,2}(2) $ & $ \ZZ_2^3=\l a,b, c\r $ & $ (a,b,c) $ & $ 2 $ \\
				\midrule[0.5pt]
				
				$2^\ell$-dipole or $2^\ell$-cycle & $\D_{2^{\ell+1}}{\times}\ZZ_2=\l a,b\r{\times}\l c\r$ & $ (b,ab,c) $ or $(c,b,ab)$ & $2$\\
				\midrule[0.5pt]
				
				&$\D_{2^{\ell+1}}{:}\ZZ_2=\l a,b\r{:}\l c\r$ & $ (b,ab,c) $ or $(c,ab,b)$ & $ 2-2^{\ell-1} $\\
				\bottomrule[1pt]
		\end{tabular}}
	\end{table}
\end{proposition}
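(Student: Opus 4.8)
The plan is to combine the reduction already in hand with a short case-by-case computation. By Lemma~\ref{arc-map}, any $2$-group $G$ carrying a $G$-regular map $\calM$ with $4\nmid\chi(\calM)$ is one of the groups in Theorem~\ref{thm:p-gps}; and since a $G$-regular map corresponds precisely to a regular triple $(x,y,z)$ for $G$, such a $G$ must have $\calT_0(G)\ne\emptyset$, so Corollary~\ref{regtri} confines $G$ to one of $\ZZ_2^2$, $\ZZ_2^3$, $\D_{2^{\ell+1}}$, $\D_{2^{\ell+1}}\times\ZZ_2$, $\D_{2^{\ell+1}}{:}\ZZ_2$, and at the same time supplies a complete set $\Delta_0$ of $\Aut(G)$-orbit representatives among the regular triples. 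By Lemma~\ref{semi} and the subsequent remark, maps attached to equivalent triples are isomorphic, so it is enough to run over the finitely many triples in $\Delta_0$.

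For a type-$1$ (regular) map $\calM=\calM(G,x,y,z)$ the stabilisers are $G_v=\l x,y\r$, $G_f=\l y,z\r$ and $G_e=\l x,z\r\cong\ZZ_2^2$ of order $4$, so
\[\chi(\calM)=\frac{|G|}{|\l x,y\r|}-\frac{|G|}{4}+\frac{|G|}{|\l y,z\r|}.\]
The main step is then, for each $G$ and each $(x,y,z)\in\Delta_0$, to read off $|\l x,y\r|$ and $|\l y,z\r|$ from the presentations recorded in Sections~2 and~3 and substitute. In each case one of $\l x,y\r,\l y,z\r$ is $G$ itself or an evident dihedral or Klein subgroup: for instance, in $\D_{2^{\ell+1}}$ the triple $(b,ab,a_0)$ gives $\l b,ab\r=G$ and $\l ab,a_0\r\cong\ZZ_2^2$, hence $\chi=1$, while $(b,ab,a_0b)$ gives $\l b,ab\r=\l ab,a_0b\r=G$, hence $\chi=2-2^{\ell-1}$; and in $\D_{2^{\ell+1}}{:}\ZZ_2$ the triple $(b,ab,c)$ gives $\l b,ab\r=\l a,b\r$ of order $2^{\ell+1}$ and $\l ab,c\r\cong\D_8$ of order $8$ (because $(abc)^2=a^{2^{\ell-1}}$), hence again $\chi=2-2^{\ell-1}$. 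Triples yielding $\chi$ divisible by $4$ are then discarded; this eliminates, e.g., $(b,ab,bc)$ in $\D_{2^{\ell+1}}\times\ZZ_2$ and in $\D_{2^{\ell+1}}{:}\ZZ_2$ (where $|\l ab,bc\r|=2^{\ell+1}$ and $\chi=4-2^\ell$), and it also forces $\ell\ge3$ in the rows with $\chi=2-2^{\ell-1}$, since this value is $0$ when $\ell=2$.

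The remaining task is to name the surviving maps. From the invariants $(|V|,|E|,|F|)=(|G|/|\l x,y\r|,\,|G|/4,\,|G|/|\l y,z\r|)$, the vertex valency $|G_v|/2$ and the face length $|G_f|/2$, one recovers the embedded graph and the supporting surface, which I would then identify with the standard small or degenerate edge-transitive maps from \cite{GW,ST}: loops and dipoles on the sphere and the projective plane, the maps $\EM_i$, the map $\M_{2,2}$, and the $2^\ell$-dipole together with its dual $2^\ell$-cycle. I expect this last identification, rather than the arithmetic, to require the most care---one must check that the combinatorial map produced by $\calM(G,x,y,z)$ is genuinely the named one, paying attention to the degenerate cases (loops, parallel edges) and to dual pairs such as $(b,ab,c)$ versus $(c,b,ab)$. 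Reading $\chi$ off the completed table then gives the stated values $1$, $2$ and $2-2^{\ell-1}$, consistent with the list in Theorem~\ref{thm:maps-2-gps}.
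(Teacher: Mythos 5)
Your proposal is correct and follows essentially the same route as the paper: reduce to the groups with $\calT_0(G)\ne\emptyset$ via Lemma~\ref{arc-map} and Corollary~\ref{regtri}, evaluate $\chi(\calM)=\frac{|G|}{|\l x,y\r|}-\frac{|G|}{4}+\frac{|G|}{|\l y,z\r|}$ on the orbit representatives in $\Delta_0$, and discard those with $4\mid\chi(\calM)$ (the map names in column~1 are likewise only cited from the literature in the paper). Your subgroup-order computations agree with the paper's, and your explicit observation that $\chi=2-2^{\ell-1}$ vanishes at $\ell=2$ correctly accounts for the $\ell\ge3$ restriction in the $\EM_4(2^\ell)$ row.
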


\begin{proof}
    By Lemma \ref{arc-map}, $ G $ is one of the groups appearing in Theorem \ref{thm:p-gps}. Now, the regular triples for such groups are determined in Corollary \ref{regtri}. We therefore need only select, from all corresponding maps, those satisfying the condition $ 4\nmid\chi(\calM) $. Recall that, for type 1 we have \[\mbox{$\chi(\calM)={|G|\over|\l x,y\r|}-{|G|\over4}+{|G|\over|\l y,z\r|}$}.\]
	For convenience, denote simply by $ \tt$ a regular triple. The small-order case $ G=\ZZ_2^2 $ or $ \ZZ_2^3 $ is simple, directly presented in the table.
	
	(i) Suppose $G=\D_{2^{\ell+1}}=\l a\r{:}\l b\r$, $ \ell\ge 2 $. Let $ a_0=a^{2^{\ell-1}} $. Since $ \l b,ab\r=\l a_0b,ab\r=G $ and $ \l a_0,ab\r\cong\ZZ_2^2 $, then either $ \tt=(b,ab,a_0) $ or $(a_0,b,ab)$, $ \chi(\calM)=1 $; or $ \tt=(b,ab,a_0b) $, $ \chi(\calM)=2-2^{\ell} $, which is not divisible by $ 4 $ as $ \ell\ge 2 $.
	
	(ii) Suppose $G=\D_{2^{\ell+1}}\times\ZZ_2=(\l a\r{:}\l b\r)\times\l c\r$, $ \ell\ge 2 $. Since $ |ab\cdot bc|=|ac|=|a| $, then $ \l ab,bc\r\cong\D_{2^{\ell{+1}}} $. Meanwhile, $ \l ab,c\r\cong\ZZ_2^2 $. Thus, either $ \tt=(b,ab,c) $ or $ (c,b,ab) $, $ \chi(\calM)=2 $; or $ \tt=(b,ab,bc) $, $ \chi(\calM)=4-2^{\ell} $, which is divisible by $ 4 $ as $ \ell\ge 2 $.
	
	(iii) Suppose $G=\D_{2^{\ell+1}}{:}\ZZ_2=(\l a\r{:}\l b\r){:}\l c\r$ with $(a,b)^c=(a_0a,b)$, $ a_0=a^{2^{\ell-1}} $, $ \ell\ge 3 $. Since $ (ac)^2=aa^c=a_0a^2$ is of order $ 2^{\ell-1} $, then $ |ab\cdot bc|=|ac|=2^{\ell} $, $ \l ab,bc\r\cong\D_{2^{\ell{+1}}} $. Meanwhile, since $(abc)^2=ab(ab)^c=aba_0ab=a_0$ is of order $ 2 $, then $ |abc| = 4 $, $ \l ab,c\r\cong\D_{8} $. Thus, either $ \tt=(b,ab,c) $ or $ (c,ab,b) $, $ \chi(\calM)=2-2^{\ell-1} $; or $ \tt=(b,ab,bc) $, $ \chi(\calM)=4-2^{\ell} $. As $ \ell\ge 3 $, we have $ 4\,\nmid\,(2-2^{\ell-1}) $  and $ 4\,\mid\,(4-2^{\ell}) $.
\end{proof}

\noindent{\bf Remark.} The marks in column 1 of the table are collected from \cite{unfaithful}. If $ \calM $ lies in rows 1\,-\,6, then the edge-stabilizer $ G_e=\l x,z\r $ is not core-free in $ G $, and so $ G $ acts unfaithfully on edges of $ \calM $. Refer to \cite{unfaithful} for an investigation of such maps. In particular, if $ \calM $ lies in rows 1\,-\,4, where $G=\ZZ_2^2$ or $ \D_{2^{\ell+1}} $ is generated by less than three involutions, then $ \calM $ is called a \textit{redundant} regular map, with a mark $ \EM $.

\subsection{Rotary pairs \& vertex-rotary maps} As before, let $ G $ be a $ 2 $-group that has a cyclic or dihedral maximal subgroup, and denote by $ \Omega(G) $ the set of involutions of $ G $. Suppose $ G $ having a rotary pair, say $ (\a,z) $. Then $G=\l \a,z\r$, where $ z \in\Omega(G)$.

We thus need only consider those groups from Theorem \ref{thm:p-gps} that can be generated by two elements. Respectively by Lemma \ref{Z-circ-Q-hypo-2}, \ref{Klein-I} and \ref{Klein-II} we have $ G \ne \Q_{2^{\ell+1}}\circ\ZZ_4 $, 
$ \D_{2^{\ell+1}}\times\ZZ_2 $ or $ \D_{2^{\ell+1}}{:}\ZZ_2 $. Further, $ G\ne\Q_{{2^{\ell+1}}} $, as the only involution is the central involution. Therefore, $ G $ is one of the groups: \[\ZZ_{2^{\ell}},\,\ZZ_{2^{\ell}}\times\ZZ_2,\,\D_{{2^{\ell+1}}},\,\SD_{{2^{\ell+1}}},\,\ZZ_{2^{\ell}}{:}\ZZ_2,\] where the non-cyclic groups can be uniformly expressed in the following form: \[\mbox{$ G=\l a\r{:}\l b\r=\ZZ_{2^\ell}{:}\ZZ_2$,\, $a^b=a^\lambda$, $\lambda\in\{\pm 1, 2^{\ell-1}\pm 1\}$,}\] with $ \ell\ge1 $ if $ \lambda=1 $, $ \ell\ge2 $ if $ \lambda=-1 $, and $ \ell\ge3 $ if $ \lambda=2^{\ell-1}\pm 1 $.

The next lemma is easy to prove.

\begin{lemma}\label{rotpair1} If $ G=\ZZ_{2^{\ell}}=\l a\r $, $ \ell\ge 2 $, then $ (\a,z) $ is equivalent to $(a,a^{2^{\ell-1}})$.
\end{lemma}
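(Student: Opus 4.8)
For $G=\ZZ_{2^\ell}=\l a\r$ with $\ell\ge 2$, a rotary pair $(\a,z)$ must satisfy $G=\l\a,z\r$ with $|z|=2$. Since $G$ is cyclic, the only involution is $z=a^{2^{\ell-1}}$, so $z$ is forced. The generation condition $\l\a,z\r=G$ then reduces to $\l\a\r=G$, i.e.\ $\a$ must be a generator of the cyclic group, so $\a=a^i$ for some odd $i$. It therefore remains to show that any such pair $(a^i,a^{2^{\ell-1}})$ with $i$ odd is $\Aut(G)$-equivalent to $(a,a^{2^{\ell-1}})$.

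First I would recall from Proposition \ref{2-gp-cycle} (the $\ZZ_{2^\ell}$ row of Table \ref{tab-cyclic}) that $\Aut(G)=\{\rho_j:a\mapsto a^j\mid 0\le j<2^\ell,\ 2\nmid j\}$ acts on $G$; this group acts transitively on the set of generators of $G$, since for any two odd $i,i'$ there is an odd $j$ with $i'\equiv ij\pmod{2^\ell}$, namely $j\equiv i'i^{-1}$, which makes sense because odd residues form a group under multiplication mod $2^\ell$. Applying $\rho_j$ with $j\equiv i^{-1}\pmod{2^\ell}$ to the pair $(a^i,a^{2^{\ell-1}})$ sends it to $(a,a^{2^{\ell-1}\cdot i^{-1}})$; here I would note $2^{\ell-1}i^{-1}\equiv 2^{\ell-1}\pmod{2^\ell}$ because $i^{-1}$ is odd, so the second coordinate is fixed. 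Hence every rotary pair is carried to $(a,a^{2^{\ell-1}})$, which is what is claimed.

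There is essentially no obstacle here: the statement is a direct consequence of the structure of $\Aut(\ZZ_{2^\ell})$ and the uniqueness of the involution in a cyclic $2$-group, which is exactly why the excerpt flags this lemma as ``easy to prove.'' The only mild point to state carefully is that the unique involution $a^{2^{\ell-1}}$ is automatically fixed by every automorphism (being characteristic, or directly from the computation above), so that equivalence of rotary pairs is governed solely by the action on the first coordinate.
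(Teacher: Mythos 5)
Your proof is correct and is exactly the routine argument the paper has in mind when it omits the proof as ``easy'': the unique involution of a cyclic $2$-group forces $z=a^{2^{\ell-1}}$, generation forces $\a$ to be a generator, and $\Aut(\ZZ_{2^\ell})$ acts transitively on generators while fixing the involution. Nothing is missing.
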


\begin{lemma}\label{rotpair2}
	Let $ G=\l a\r{:}\l b\r=\ZZ_{2^\ell}{:}\ZZ_2$, with $a^b=a^\lambda$, $\lambda\in\{\pm 1,\,2^{\ell-1}\pm 1 \}$. Then
	\begin{itemize}
		\item[\rm (1)] if $ \lambda\in\{1,2^{\ell-1}+1\}$, then $ (\a,z) $ is equivalent to $(a,b)${\rm;}\vs 
		\item[\rm (2)] if $ \lambda\in\{-1,2^{\ell-1}-1\}$, then $ (\a,z) $ is equivalent to $(a,b)$ or $ (ab,b) $.
	\end{itemize} 
\end{lemma}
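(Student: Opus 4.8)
The plan is to treat the four groups $G=\l a\r{:}\l b\r=\ZZ_{2^\ell}{:}\ZZ_2$ with $a^b=a^\lambda$, $\lambda\in\{1,-1,2^{\ell-1}+1,2^{\ell-1}-1\}$ (so that $G$ is $\ZZ_{2^\ell}\times\ZZ_2$, $\D_{2^{\ell+1}}$, $\ZZ_{2^\ell}{:}\ZZ_2$ or $\SD_{2^{\ell+1}}$ respectively) by one uniform two-step argument, feeding in the descriptions of $\Omega(G)$ and $\Aut(G)$ from Proposition \ref{2-gp-cycle}. The finitely many cases with $|G|\le 2^3$ are either immediate (as for $\ZZ_2^2$) or carry an entirely analogous automorphism group and can be checked by hand, so I will assume $\ell$ is large enough that Proposition \ref{2-gp-cycle} applies.

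The first step is to normalise $z$ to $b$. In each of the four groups $\Phi(G)=G'G^2=\l a^2\r$, so $\ov G:=G/\l a^2\r\cong\ZZ_2^2=\l\ov a,\ov b\r$, and by the Burnside basis theorem $(\a,z)$ generates $G$ precisely when $\ov\a,\ov z$ are distinct and both non-trivial. When $\ell\ge 2$ no odd power of $a$ is an involution, so inspecting $\Omega(G)$ shows that the involutions $z$ that can appear in a rotary pair are exactly those lying outside $\l a^2\r$, and that $\Aut(G)$ is transitive on them -- for $\D_{2^{\ell+1}}$ and $\SD_{2^{\ell+1}}$ via the automorphisms $g_{1,j}\colon(a,b)\mapsto(a,a^jb)$ ($j$ arbitrary, respectively $j$ even), and for $\ZZ_{2^\ell}\times\ZZ_2$ and $\ZZ_{2^\ell}{:}\ZZ_2$ via $\s\colon(a,b)\mapsto(a,a^{2^{\ell-1}}b)$. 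Hence, after replacing $(\a,z)$ by an equivalent pair, I may assume $z=b$.

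The second step analyses $\a$ with $z=b$ held fixed. Then $\ov\a$ must be $\ov a$ or $\ov{ab}$, so $\a=a^m$ or $\a=a^mb$ with $m$ odd, and everything comes down to the orbit of $a$ under $\mathrm{Stab}_{\Aut(G)}(b)$. If $\lambda\in\{1,2^{\ell-1}+1\}$, then by Proposition \ref{2-gp-cycle} the group $\Aut(G)$ contains the automorphism $\tau$ with $(a,b)^\tau=(ab,b)$ as well as all $\rho_x\colon(a,b)\mapsto(a^x,b)$; a count of orders shows $\l\rho_5,\rho_{-1},\tau\r$ exhausts the stabiliser of $b$ and carries $a$ onto every element of $\{a^i,a^ib\mid i\text{ odd}\}$, so $\a$ can be moved to $a$ and $(\a,z)\sim(a,b)$. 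If $\lambda\in\{-1,2^{\ell-1}-1\}$, then $\Aut(G)=\{g_{i,j}\colon(a,b)\mapsto(a^i,a^jb)\}$ (with $j$ even in the semi-dihedral case), and $g_{i,j}$ fixes $b$ iff $j=0$, so $\mathrm{Stab}_{\Aut(G)}(b)=\l\rho_5,\rho_{-1}\r$ only rescales the exponent -- sending $a^m$ to $a$ and $a^mb$ to $ab$ -- whence $(\a,z)\sim(a,b)$ or $(ab,b)$. In case (2) these two pairs are genuinely inequivalent, since $|a|=2^\ell$ while $|ab|\le 4$ (indeed $|ab|=2$ in $\D_{2^{\ell+1}}$ and, as $(ab)^2=a\cdot a^b=a^{2^{\ell-1}}$, $|ab|=4$ in $\SD_{2^{\ell+1}}$), so no automorphism fixing $b$ can send $a$ to $ab$.

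The hard part will be the first step: correctly reading off from $\Omega(G)$ which involutions are eligible to be $z$, and then checking that $\Aut(G)$ fuses them into a single orbit in each of the four groups -- the dihedral case being the most work, since there the eligible involutions split into two $\Inn(G)$-classes that only get merged by an outer automorphism. Everything else, in particular the verification that the exhibited generators of $\mathrm{Stab}_{\Aut(G)}(b)$ cover all of $\{a^m,a^mb\mid m\text{ odd}\}$ in case (1) but only half of it in case (2), is a short direct computation from the data of Proposition \ref{2-gp-cycle}.
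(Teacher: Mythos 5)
Your proposal is correct, but it reaches the classification by a genuinely different route than the paper. The paper's proof is a counting argument: for each $\lambda$ it uses $\Omega(G)$ and the quotient $\ov G=G/\l a^2\r\cong\ZZ_2^2$ (exactly as you do) to enumerate \emph{all} rotary pairs, obtains totals $2^{\ell+1}$, $2^{2\ell}$ and $2^{2\ell-1}$ in the three cases, and then divides by $|\Aut(G)|$ via the semiregularity statement of Lemma \ref{semi} to conclude there are $1$ or $2$ orbits, after which it merely names representatives. You instead run a two-step stabilizer argument: first normalise $z$ to $b$ using the transitivity of $\Aut(G)$ on the eligible involutions (via $\s$, respectively the $g_{1,j}$), then compute the orbits of $\mathrm{Stab}_{\Aut(G)}(b)$ on the candidates $\a\in\{a^m,\,a^mb \mid m \text{ odd}\}$, finding a single orbit exactly when $\tau$ is available and two orbits when the stabiliser collapses to $\l\rho_5,\rho_{-1}\r$. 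Both arguments draw the same input from Proposition \ref{2-gp-cycle}. The paper's route is shorter once Lemma \ref{semi} and the exact order of $\Aut(G)$ are in hand, but it leaves the inequivalence of $(a,b)$ and $(ab,b)$ implicit; your route establishes that inequivalence explicitly (via $|a|\ne|ab|$, or directly from the stabiliser computation) and does not actually require knowing $|\Aut(G)|$ --- your aside that $\l\rho_5,\rho_{-1},\tau\r$ ``exhausts'' the stabiliser of $b$ is true but dispensable --- which also makes the degenerate cases $\ell\le 2$, where the order formulas of Proposition \ref{2-gp-cycle} break down, less delicate than you suggest.
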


\begin{proof}
	First, if $ \ell=1 $, $ G=\ZZ_2^2 $, then obviously $ (\a,z) $ is equivalent to $ (a,b) $. We then assume $ \ell\ge 2 $, and let $ a_0=a^{2^{\ell-1}} $. According to Proposition \ref{2-gp-cycle}, with the small-order case noted as $ \Aut(\ZZ_4\times\ZZ_2)=\Aut(\D_8)=\D_8 $,  we conclude $ |\Aut(G)|= 2^{\ell+1}$, $2^{2\ell-1}$ or $2^{2\ell-2}$, respectively for $\lambda\in\{1, 2^{\ell-1}+1\}$, $\lambda=-1$ or $\lambda=2^{\ell-1}-1$.
	
	Note that $ (a^2)^b=(a^2)^\lambda $, so $ \l a^2\r\lhd G $. Let $ \ov G=G/\l a^2\r=\l \ov a,\ov b\r\cong\ZZ_2^2 $, so that, $ \ov G=\l \ov {a},\ov {z}\r $ implies $\ov\a,\ov z\in\{\ov a,\ov b,\ov {ab}\}$ and $ \ov \a\ne\ov z $.
	
	(i) Suppose $ \lambda=1$ or $2^{\ell-1}+1$. By simple calculation, the set of involutions $ \Omega(G)=\{a_0,a_0b,b\} $. Thus, $ z\in\{a_0b,\, b\} $, $ \ov z=\ov b $, and so $ \ov\a=\ov a $ or $ \ov {ab} $, namely, $ \a=a^i $ or $ a^ib $ with $ i $ odd. Each of such pairs can generate $ G $. Thus, there are precisely $ 2\cdot 2\cdot 2^{\ell-1}=2^{\ell+1} $ rotary pairs, which by Lemma \ref{semi} fall into $ 2^{\ell+1}/||\Aut(G)|=1 $ orbit, with $ (a,b) $ a representative.
	
	(ii) Suppose $ \lambda=-1$. Then $ G $ is dihedral, $ \Omega(G)=\{a_0,a^{\lambda}b\,|\,0\le \lambda<2^{\ell}\}$. Thus, $ z=a^{j}b$ for some $0\le j<2^{\ell}$, and since $ \ov\a\ne\ov 1,\ov z $, either $ \ov \a=\ov a $, $ \a=a^i $ with $ i $ odd, or $\a= a^{k}b $ with $k-j $ odd. Each of such pairs can generate $ G $. Thus, there are precisely $ 2^{\ell}\cdot 2^{\ell-1}+2^{\ell}\cdot 2^{\ell-1}=2^{2\ell} $ rotary pairs, which by Lemma \ref{semi} fall into $ 2^{2\ell}/||\Aut(G)|=2 $ orbits, with representatives $(a,b)$ and $(ab,b)$.
	
	(iii) Suppose $ \lambda=2^{\ell-1}-1$. Then $ G $ is semi-dihedral, $ \Omega(G)=\{a_0,a^{2\lambda}b\,|\,0\le \lambda<2^{\ell-1}\}$. Thus, $ z=a^{2k}b$ for some $0\le k<2^{\ell-1} $, $\ov z=\ov b$, and so $ \ov\a=\ov a $ or $ \ov {ab} $, namely, $ \a=a^i $ or $ a^ib $ with $ i $ odd. Each of such pairs can generate $ G $. Thus, there are precisely $ 2^{\ell-1}\cdot2\cdot 2^{\ell-1}=2^{2\ell-1} $ rotary pairs, which by Lemma \ref{semi} fall into $ 2^{2\ell-1}/||\Aut(G)|=2 $ orbits, with representatives $(a,b)$ and $(ab,b)$.
\end{proof}

\begin{proposition}\label{2gp-rot-maps}
	Let $ G $ be a finite $ 2 $-group and $ \calM $ be a $ G $-vertex-rotary map with $ 4\nmid\chi(\calM) $. Then $ G $ and $ \calM=\calM(G,\a,z) $ are listed in the following table.
	\begin{table}[h]
		\newcommand{\tabincell}[2]{\begin{tabular}{@{}#1@{}}#2\end{tabular}}
		\centering
		\caption{\small $ G $-vertex-rotary maps}
		\renewcommand{\arraystretch}{0.9}
		\scalebox{0.8}{
			\begin{tabular}{ccccc}
				\toprule[1pt]
				$G$ & $(\a,z)$ && $ \chi_1(\calM): 2^*{\rm ex} $  & $ \chi_2(\calM): 2^P{\rm ex}$ \\
				\midrule[1pt]

				$ \ZZ_{2^\ell}=\l a\r $ & $ (a,a^{2^{\ell-1}}) $ && $ 1 $ & $ 2-2^{\ell-1} $\\
				\midrule[0.5pt]
				
				$ \ZZ_{2^\ell}\times\ZZ_2=\l a,b\r $,\, \tabincell{c}{$ \ell=1 $\\ $ \ell\ge 2 $} & $ (a,b) $ && $ 2 $ & \tabincell{c}{$ 2 $ \\ $\times$} \\
				\midrule[0.5pt]
				
				\tabincell{c}{$ \ZZ_{2^\ell}{:}\ZZ_2=\l a\r{:}\l b\r $, \\ $ a^b=a^{2^{\ell-1}+1}$} & $ (a,b) $ && $ 2-2^{\ell-1} $ & $ \times$ \\
				\midrule[0.5pt]
				
				$ \D_{{2^{\ell+1}}}=\l a\r{:}\l b\r $ & \tabincell{c}{$ (a,b) $ \\ $ (ab,b) $} && \tabincell{c}{$ \times $ \\ $ 2 $} & \tabincell{c}{$ 2 $ \\ $ 2 $} \\
				\midrule[0.5pt]
				
				$ \SD_{{2^{\ell+1}}}=\l a\r{:}\l b\r $ & \tabincell{c}{$ (a,b) $ \\ $ (ab,b) $} && \tabincell{c}{$ \times $ \\ $ 2-2^{\ell-1} $} & \tabincell{c}{$ 2-2^{\ell-1} $ \\ $ 2-2^{\ell-1} $} \\
				\bottomrule[1pt]
		\end{tabular}}
	\end{table}
\end{proposition}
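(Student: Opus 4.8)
The plan is to follow the same scheme as in the proofs of Propositions~\ref{2gp-rev-maps} and~\ref{2gp-reg-maps}: first restrict the possible groups $G$, then list the rotary pairs up to $\Aut(G)$-equivalence, and finally compute, for each pair, the Euler characteristics of the two vertex-rotary maps it determines, retaining only those not divisible by $4$.

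First I would use Lemma~\ref{arc-map} to place $G$ among the groups of Theorem~\ref{thm:p-gps}, and then note that a $G$-vertex-rotary map forces $G=\l\a,z\r$ with $|z|=2$, so $G$ must be $2$-generated. By Lemmas~\ref{Z-circ-Q-hypo-2}, \ref{Klein-I} and~\ref{Klein-II} the groups $\Q_{2^{\ell+1}}\circ\ZZ_4$, $\D_{2^{\ell+1}}\times\ZZ_2$ and $\D_{2^{\ell+1}}{:}\ZZ_2$ are not $2$-generated, and $\Q_{2^{\ell+1}}$ is excluded since its only involution is central; hence $G$ is one of $\ZZ_{2^{\ell}}$, $\ZZ_{2^\ell}\times\ZZ_2$, $\D_{2^{\ell+1}}$, $\SD_{2^{\ell+1}}$, $\ZZ_{2^\ell}{:}\ZZ_2$, the non-cyclic ones being written uniformly as $\l a\r{:}\l b\r$ with $a^b=a^\lambda$, $\lambda\in\{\pm1,\,2^{\ell-1}\pm1\}$. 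By Lemmas~\ref{rotpair1} and~\ref{rotpair2} the rotary pairs, up to equivalence, are then exactly $(a,a^{2^{\ell-1}})$ in the cyclic case, $(a,b)$ when $\lambda\in\{1,\,2^{\ell-1}+1\}$, and $(a,b)$ or $(ab,b)$ when $\lambda\in\{-1,\,2^{\ell-1}-1\}$.

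Next I would attach to each rotary pair $(\a,z)$ its two vertex-rotary maps and read off their Euler characteristics from the classification table of the five types at the start of the section. The type-$2^*{\rm ex}$ map has one vertex orbit with stabiliser $\l\a\r$, exactly $|G|/2$ edges, and one face orbit with stabiliser $\l z,z^\a\r$, giving $\chi_1(\calM)=\frac{|G|}{|\l\a\r|}-\frac{|G|}{2}+\frac{|G|}{|\l z,z^\a\r|}$; the type-$2^P{\rm ex}$ map instead has face stabiliser $\l\a z\r$, giving $\chi_2(\calM)=\frac{|G|}{|\l\a\r|}-\frac{|G|}{2}+\frac{|G|}{|\l\a z\r|}$. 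Everything thus reduces to computing $|\l z,z^\a\r|$ and $|\a z|$ for the handful of pairs above. Using the commutation rule $ba^k=a^{k\lambda}b$ one finds $z^\a=a^{\lambda-1}b$ for the pair $(a,b)$ and $z^\a=a^{1-\lambda}b$ for $(ab,b)$, whence $\l z,z^\a\r=\l a^{\lambda(\lambda-1)},b\r$ in both cases, and since $\lambda(\lambda-1)\equiv 1-\lambda\pmod{2^\ell}$ a short residue computation shows its order to be $2$, $2^\ell$, $4$ or $2^\ell$ according as $\lambda=1,-1,2^{\ell-1}+1,2^{\ell-1}-1$; similarly $\a z=a$ when $\a=ab$, while $(\a z)^2=a^{1+\lambda}$ when $\a=a$, and in the cyclic case $\a z=a^{2^{\ell-1}+1}$ has order $2^\ell$. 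Substituting these into the two formulas reproduces the $\chi_1$- and $\chi_2$-columns of the table; discarding the entries with $4\mid\chi(\calM)$ leaves precisely the maps claimed, and in all surviving cases $\chi(\calM)\in\{1,\,2,\,2-2^{\ell-1}\}$, consistently with Theorem~\ref{thm:maps-2-gps}.

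The computations are elementary, so the work is essentially careful bookkeeping, and that is where I expect the only real difficulty. One must bear in mind that each rotary pair yields \emph{two} maps, of types $2^*{\rm ex}$ and $2^P{\rm ex}$, so both characteristics need to be tested (and one should confirm that both constructions are genuinely available for every rotary pair); the residue arithmetic modulo $2^\ell$ for $|a^{\lambda(\lambda-1)}|$ and $|a^{1+\lambda}|$ behaves differently for the four values of $\lambda$ and must be done case by case; and the degenerate small cases have to be reconciled with the general formulas — for example $\D_4=\ZZ_2^2$ and $\SD_8\cong\D_8$ must not be counted twice, and small $\ell$ can make a formula divisible by $4$ so that the corresponding map drops out (e.g.\ the type-$2^P{\rm ex}$ map on $\ZZ_{2^\ell}\times\ZZ_2$ has $\chi_2=4-2^\ell$, which survives only for $\ell=1$).
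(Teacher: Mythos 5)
Your proposal is correct and follows essentially the same route as the paper: restrict $G$ via Lemma~\ref{arc-map} and the $2$-generation/involution constraints, invoke Lemmas~\ref{rotpair1} and~\ref{rotpair2} for the rotary pairs, and then evaluate $\chi_1,\chi_2$ from the orders of $\l z,z^\a\r$ and $\a z$ case by case in $\lambda$ (the paper computes $zz^\a=a^{1-\lambda}$ and $(\a z)^2=a^{1+\lambda}$ directly, which is equivalent to your bookkeeping). The only cosmetic slip is the aside identifying $\SD_8$ with $\D_8$, which does not match the paper's convention ($\lambda=2^{\ell-1}\pm1$ requires $\ell\ge3$) but plays no role in the argument.
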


\begin{proof}
	By Lemma \ref{arc-map}, $ G $ is one of the groups appearing in Theorem \ref{thm:p-gps}. Now, the rotary pairs for such groups are determined in Lemma \ref{rotpair1}, \ref{rotpair2}. We therefore need only select, from all corresponding maps, those satisfying the condition $ 4\nmid\chi(\calM) $. Recall that, respectively for type $ 2^*{\rm ex} $ and $ 2^P{\rm ex} $ we have \[\mbox{$\chi_1(\calM)={|G|\over| \a|}-{|G|\over2}+{|G|\over|\l z,z^{\a}\r|}$, and $\chi_2(\calM)={|G|\over|\a|}-{|G|\over2}+{|G|\over| \a z|}$}.\] The abelian case $ G=\ZZ_{{2^\ell}} $ or $\ZZ_{{2^\ell}}\times\ZZ_2$ is simple, directly presented in the table. 
	
	For non-abelian $ G=\l a\r{:}\l b\r=\ZZ_{2^\ell}{:}\ZZ_2$, with $a^b=a^\lambda$, by Lemma \ref{rotpair2}, either (i) $ (\a,z)=(a,b) $, $ \lambda\in\{-1,2^{\ell-1}\pm 1\} $; or (ii) $ (\a,z)=(ab,b) $, $ \lambda\in\{-1,2^{\ell-1}-1\} $. Note that $ G $ is non-abelian, $ z\ne z^\a $, and so $ \l z,z^{\a}\r $ is a dihedral group of order $ 2|zz^{\a}| $.
    \begin{itemize}
    	\item[(i)] If $ (\a,z)=(a,b) $, then $ |\a|=|a|=2^{\ell} $, and $zz^{\a}=bb^a=(a^{-1})^ba= a^{1-\lambda} $, $ (\a z)^2=(ab)^2=aa^b=a^{1+\lambda} $, so respectively for $ \lambda=2^{\ell-1}+1,\, -1,$ or $ 2^{\ell-1}-1 $,\[\mbox{$ |zz^{\a}|= 2,\,2^{\ell-1} $ or $ 2^{\ell-1} $, and $ |\a z|= 2^{\ell},\,2 $ or $ 4 $.}\]
    	\item[(ii)] If $ (\a,z)=(ab,b) $, then $ zz^\a=bb^{ab}=(bb^a)^b $, and so \[\mbox{$ |\a|=|ab| $, $|zz^\a|=|(bb^a)^b|=|bb^a|$ and $ |\a z|=|a| $}\] are all given in (i).
    \end{itemize}
Then $\chi_1(\calM),\chi_2(\calM)$ are easily obtained, as shown in the table. In the case $\times$, $\chi(\calM)=4-2^{\ell}$ is divisible by $ 4 $ as $ \ell\ge2 $. In the case $\chi(\calM)=2-2^{\ell-1}$, we assume $ \ell\ge3 $, so that, $ 4\nmid2-2^{\ell-1} $.
\end{proof}

\noindent {\bf Acknowledgments.} The author is grateful to Prof.~Cai Heng Li (Southern  University of Science and Technology, Shenzhen) for the valuable ideas and suggestions.

\section*{Declarations}

The author declares that there is no any financial/personal relationship with other people/organizations not mentioned that can inappropriately influence the work.

\end{document}